\newtheorem{thm}{Theorem}[section]
\newtheorem{prp}[thm]{Proposition}
\newtheorem{lem}[thm]{Lemma}
\newtheorem{cor}[thm]{Corollary}
\newtheorem{df}[thm]{Definition}
\newtheorem{prp-df}[thm]{Definition-Proposition}
\theoremstyle{definition}
\theoremstyle{remark}
\newcommand{\refequ}[1]{(\ref{#1})}
\newcommand{\comments}[1]{~}
\newcommand{\E}{\mathbb{E}}
\newcommand{\condEp}[2]{\left.\E\left(#1\right\lvert #2 \right)}
\newcommand{\R}{\mathbb{R}}
\newcommand{\Proba}{\mathbb{P}}
\newcommand{\law}{\mathcal{L}}
\newcommand{\N}{\mathbb{N}}
\newcommand{\Z}{\mathbb{Z}}
\newcommand{\calF}{\mathcal{F}}
\newcommand{\sbt}{\,\begin{picture}(-1,1)(-1,-3)\circle*{3}\end{picture}\ }
\newcommand{\indic}{\mathbbm{1}}
\newcommand{\norm}[1]{\left\Vert #1\right\Vert}
\newcommand{\abs}[1]{\left\lvert #1 \right\rvert}
\newcommand{\eqdef}{\colonequals}
\newcommand\bigp[1]{\left(#1\right)}
\newcommand{\bigcro}[1]{\left[#1\right]}
\newcommand{\eps}{\varepsilon}
\renewcommand{\leq}{\leqslant}
\renewcommand{\geq}{\geqslant}
\renewcommand{\Tilde}{\widetilde}
\definecolor{DarkBlue}{rgb}{0.0, 0.28, 0.39}
\title{Mixing properties of a class of nonuniformly expanding maps. Application to Hölderian invariance principles.}
\date{ }
\author{V. Alouin\thanks{\'Ecole Normale Supérieure de Lyon, F-69342 Lyon, France.}, A. Bigot\thanks{LAMA, Univ Gustave Eiffel, Univ Paris Est Créteil, UMR 8050 CNRS, F-77454 Marne-La-Vallée, France.}}
\begin{document}
\maketitle

\begin{abstract}
    We study the mixing properties of a class of nonuniformly expanding maps when the return time to the basis has a weak moment of order $p>1$, up to a slowly varying function. From these computations, we deduce an invariance principle in Hölder spaces for the partial sum process of Birkhoff sums of Hölder continuous observables. The results apply to a class of intermittent maps of the unit interval. For such a map, we also prove that the Hölder invariance principle remains true for BV observables.
\end{abstract}

\section{Introduction and main results}
    \label{Section:intro}
Let $(E, d)$ be a complete bounded separable metric space with the Borel $\sigma$-algebra. We consider transformations $f$ of $E$, preserving a probability $\mu$. Suppose that $\varphi : E \to \R$ is a Hölder-continuous observable with $\int \varphi \, d\mu = 0$. It is worth noting that since $\mu$ is $f$-invariant, the sequence of increments $(\varphi \circ f^n)_n$ is stationary. We will place ourselves in the framework defined by Young (see \parencite[]{Young}) with  a basis $Y \subset E$ equipped with a \textit{reference measure} $m$ and $R : Y \to \mathbb N$ a return time to $Y$ (see Section \ref{Section:HIP} for more details). 
\\

Let $(\tau_\varphi(n))_n$ be the $\tau$-mixing coefficients associated with $(\varphi \circ f^n)_{n \geq 0}$ (see \cite{DP2004} for the definition and for properties of these coefficients). More precisely, let
\begin{align*}
    \tau_\varphi (n) = \underset{l \geq 1}{\sup} \, \frac{1}{l} \underset{1 \leq i_1<\cdots<i_l}{\sup} \tau\bigp{\sigma(\varphi(f^j) \, ; \, j \geq i_l + n), (\varphi(f^{i_1}), \cdots, \varphi(f^{i_l}))}
\end{align*}
where for any $k \geq 1$, any $Z$ taking values in $\R^k$ and any $\sigma$-algebra $\mathcal M$,
\begin{align*}
    \tau(\mathcal M, Z) 
    =
    \norm{
        \sup\left\{
            \abs{P_{Z|\mathcal M}(g) - P_Z(g)} \, : \, g \in \Lambda_1(\R^k)
        \right\}
    }_1
\end{align*}
and $\Lambda_1(\R^k)$ is the space of 1-Lipschitz functions from $\R^k$ to $\R$
with respect to the $\ell^1$ distance on $\R^k$.
Our first objective is to estimate these coefficients, following \cite{CDM2023}, when the upper tail of $R$ is of order $n^{-p}s(n)$ for $s$ a slowly varying function. 
It is known that a suitable control of these mixing coefficients leads to some deviation inequalities (see for instance \parencite[]{DP2004}, \parencite[]{CDM2023}, \parencite[]{MPR2011}) and limit theorems (see for instance \parencite[]{DP2004}, \parencite[]{CDM2023}, \parencite[]{Gir2017}) for the partial sums associated with $(\varphi \circ f^n)_{n \geq 1}$. In this article, we give an application to the invariance principle in Hölder spaces, as described in the following paragraph.
\\

It is common to study the Birkhoff sums $S_n(\varphi)$ associated with $(\varphi \circ f^n)_n$, seen as a discrete time random process on the probability space $(E, \mu)$, and the normalized Birkhoff sum process $W_n(\varphi)$, respectively defined by $S_n(\varphi) = \sum_{k =0}^{n - 1} \varphi \circ f^k$ and
\begin{align}
    \label{def:Wn_phi}
    &W_n(\varphi)(t) \eqdef n^{-1/2}\Big(S_{\lfloor nt \rfloor}(\varphi) + (nt - \lfloor nt \rfloor) \varphi \circ f^{\lfloor nt \rfloor + 1}\Big) ,
    \quad
    \, n \geq 1,\, t \in [0,1]. 
\end{align}
In the particular case when $(\varphi \circ f^n)_n$ is a sequence of i.i.d. r.v.'s with variance 1, Donsker proved in \cite{Don1951} that $\{W_n(\varphi)(t) \, : t \in [0,1]\}$ converges in distribution to a standard Brownian motion in the space of continuous functions equipped with the supremum norm. The case of dependent random variables was also intensively studied (see for instance \parencite[Chapter 6]{MPU2019} and the references therein). In this paper, one of our aim is to prove a related result that allows us to control a larger class of functionals of the Donsker process. Then, we will consider the space of functions $\mathcal H _\eta^0 [0,1]$ (see Section \ref{Section:HIP} for the definition and the properties of this space). 

In 2017, Giraudo (\parencite[]{Gir2017}) has obtained sufficient conditions in terms of $\tau$-mixing coefficients (see Theorem \ref{thm_gir}). Computing the $\tau$-mixing coefficients as described in Section \ref{Section:HIP} leads then to the following Hölderian invariance principle. 
\begin{thm}\label{hip}
    Let $f$ be a nonuniformly expanding map as defined in Section \ref{Section:HIP}, with return time $R$. Assume that there exist $p>2$, a slowly varying function at infinity $s$ that converges to 0 at infinity and a sequence $(u_n)_n$ of positive reals tending to 0 such that $e^{-nu_n} \leq m(R>n) \leq n^{-p}s(n)$. Then for any centered Hölder continuous observable $\varphi$, the series $\sigma^2(\varphi) = Var(\varphi) + 2\sum_{k \geq 1} Cov(\varphi, \varphi \circ f^k)$ converges absolutely and
    \begin{align*}
        W_n(\varphi) \to \sigma(\varphi)W \text{ in distribution in } \mathcal H _{1/2-1/p}^0 [0,1] 
    \end{align*}
    where $W$ is a standard Brownian motion. 
\end{thm}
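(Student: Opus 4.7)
The strategy is a two-step reduction: first, estimate the $\tau$-mixing coefficients $\tau_\varphi(n)$ from the tail assumption on the return time $R$; second, plug these estimates into Giraudo's criterion (Theorem~\ref{thm_gir}) to obtain the Hölderian invariance principle. Since $(E,d)$ is bounded, a Hölder continuous $\varphi$ is automatically bounded, so moment conditions on $\varphi$ are free and the only real work is on the dependence side.

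The first step follows the methodology of \cite{CDM2023} specialized to Young towers. Using the representation of $f$ via the Young tower with return time $R$ satisfying $m(R>n) \leq n^{-p} s(n)$, Section \ref{Section:HIP} produces a bound of the form
\begin{align*}
\tau_\varphi(n) \leq C_\varphi \, n^{-(p-1)} s(n),
\end{align*}
where $C_\varphi$ depends on the Hölder norm of $\varphi$ and where the lower bound $e^{-n u_n} \leq m(R>n)$ is used only to get uniform control of the tower at the large-deviations scale. Under the assumption $p>2$, this immediately gives the absolute convergence of $\sigma^2(\varphi) = \mathrm{Var}(\varphi) + 2 \sum_{k \geq 1} \mathrm{Cov}(\varphi, \varphi \circ f^k)$ via the standard covariance inequality $|\mathrm{Cov}(\varphi, \varphi \circ f^k)| \leq 2 \|\varphi\|_\infty \, \tau_\varphi(k)$, since $\sum_k k^{-(p-1)} s(k) < \infty$.

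For the second step, I would apply Theorem \ref{thm_gir} of \cite{Gir2017}, whose hypothesis reduces, for bounded stationary sequences, to a summability condition on $\tau_\varphi(n)$ matching the target Hölder exponent $\eta = 1/2 - 1/p$. The bound $\tau_\varphi(n) = O(n^{-(p-1)} s(n))$ combined with $p>2$ and $s$ slowly varying converging to $0$ yields the required summability (Karamata's theorem handles the slowly varying factor). Therefore Giraudo's criterion delivers the weak convergence of $W_n(\varphi)$ to $\sigma(\varphi) W$ in $\mathcal{H}^0_{1/2 - 1/p}[0,1]$, which is precisely the conclusion.

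The main obstacle is the first step, namely extracting the polynomial $\tau$-mixing bound with the correct exponent $p-1$ and preserving the slowly varying factor $s$. The difficulty is twofold: one must couple the dynamics $(f^n)$ with a stationary extension of the Young tower so that the $\tau$-coefficient, which is defined via $L^1$ norms of conditional expectations against Lipschitz test functions, can be estimated by the tail of $R$; and the slowly varying function $s$ must be carried through the truncation and telescoping arguments without loss, which requires using Potter-type bounds. Once this computation is in place in Section \ref{Section:HIP}, the reduction to Theorem \ref{thm_gir} is essentially mechanical.
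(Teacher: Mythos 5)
Your overall route is exactly the paper's: obtain the mixing bound $\tau_\varphi(n)=O(n^{1-p}s(n))$ from Proposition \ref{prp:estim_tau_weakmoment} and feed it into Giraudo's criterion (Theorem \ref{thm_gir}); together with the boundedness of $\varphi$ and the covariance inequality $\abs{\mathrm{Cov}(\varphi,\varphi\circ f^k)}\leq 2\norm{\varphi}_\infty\,\tau_\varphi(k)$, the case $p>2$ gives the absolute convergence of $\sigma^2(\varphi)$, as you say.

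The one point you should fix is the verification of Giraudo's hypothesis. As quoted in Theorem \ref{thm_gir}, the hypothesis is the limit condition \refequ{cond_Gir_tau}, namely $n^{p-1}\tau_\varphi(n)\to 0$, not a summability condition ``matching the exponent $\eta=1/2-1/p$''. The distinction is not cosmetic: from $\tau_\varphi(n)=O(n^{1-p}s(n))$, the matching summability $\sum_n n^{p-2}\tau_\varphi(n)<\infty$ reduces to $\sum_n n^{-1}s(n)<\infty$, which can fail even though $s\to 0$ (take $s(n)=1/\log n$); Karamata or Potter bounds cannot repair this, so the chain ``bound $\Rightarrow$ summability $\Rightarrow$ Giraudo'' breaks at its first link. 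The correct, and much shorter, verification is the one the paper uses: $n^{p-1}\tau_\varphi(n)=O(s(n))\to 0$, which is exactly where the hypothesis that $s$ tends to $0$ at infinity enters. (A side remark: the lower bound $e^{-nu_n}\leq m(R>n)$ is consumed inside the proof of Proposition \ref{prp:estim_tau_weakmoment}, where it guarantees that the tail of the tower variable $h$, and hence of the meeting time, dominates the exponential error terms; it is not an extra ingredient at the level of Theorem \ref{hip} itself.) With the limit-condition verification substituted in, your argument is complete and coincides with the paper's.
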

~\\ 

Let us give some examples of transformations to which our Hölderian principle applies. In what follows we will say that a map $f$ is in the class $Hol(\gamma, \rho)$ with $\gamma > 0$ and $\rho$ a function if $f$ is given by 
    \begin{align}
        \label{def:Holland_map}
    f : 
    \left\{
          \begin{array}{ll}
            [0,1]  \longrightarrow  [0,1] \\
            x  \longmapsto  
                \left\{  
                \begin{array}{ll}
                      x(1+x^{\gamma}\rho(x)) 
                        &\text{ if } 0 \leq x \leq 1/2 
                  \\ 2x -1 
                        &\text{ if } 1/2 < x \leq 1
                \end{array}
                \right.
            \\
          \end{array}
    \right.
    \end{align}
where: 
\begin{itemize}
    \item $\rho$ is of class $C^2$ on $]0,1[$, slowly varying at $0$ (see Annex \ref{annex:slowlyvar} for some basic properties of such functions and \cite{BGT} for a thorough study),
    \item $x\rho'(x) = o(\rho(x))$ and $x^2\rho''(x) = o(\rho(x))$ at $0$,
    \item $f(1/2)= 1$ and $f'(x) > 1$  for any $x \neq 0, 1/2$, 
    \item $\rho$ converges to $0$ at $0$. 
\end{itemize}
This class of functions is a subset of the class of Holland maps defined in \cite{Hol05}, largely studied by Gouëzel in \cite{Gou2004} and \cite{GouThese}. As $0$ is a fixed point with $f'(0) = 1$, $f$ is an intermittent map for any $\gamma > 0$ and to study its degree of intermittency it is convenient to consider the basis  $Y = ]1/2,1]$ which is away from $0$. As proved in \parencite[]{Gou2004} the first return time $R$ of $f$ given by
\begin{align*}
    R(x) = \inf\{n \geq 1 : f^n(x) \in Y\}
\end{align*}
has a weak polynomial moment of order $p = 1/ \gamma$. More precisely, combining Theorem 1.4.10 and Remark 1.4.13 in \cite{Gou2004}, we infer that there exists a slowly varying function at infinity $s$ such that $m(R > n) = n^{-1/\gamma}s(n)$. Finally, if $\rho(x) \xrightarrow[x \to 0]{} 0$ then $s(x) \xrightarrow[x \to \infty]{} 0$ so that, applying our Theorem \ref{hip}, we get the following result. 
\begin{cor}
    Let $f$ be a Holland map in $Hol(\gamma, \rho)$ with $\gamma < 1/2$. Then for any centered Hölder continuous observable $\varphi$, the series $\sigma^2(\varphi) = Var(\varphi) + 2\sum_{k \geq 1} Cov(\varphi, \varphi \circ f^k)$ converges absolutely and
    \begin{align*}
        W_n(\varphi) \to \sigma(\varphi)W \text{ in distribution in } \mathcal H _{1/2-\gamma}^0 [0,1] 
    \end{align*}
    where $W$ is a standard Brownian motion. 
\end{cor}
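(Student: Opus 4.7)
The proof is a direct verification of the hypotheses of Theorem \ref{hip}, and the paragraph preceding the corollary already assembles essentially all the ingredients; the plan is simply to make this verification explicit.

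First I would confirm that any $f \in Hol(\gamma,\rho)$ fits into the framework of nonuniformly expanding maps from Section \ref{Section:HIP}. The natural basis is $Y = ]1/2,1]$, which is bounded away from the neutral fixed point $0$; on $]1/2,1]$, $f$ is the full expanding branch $x \mapsto 2x-1$, and on $]0,1/2[$ it is a $C^2$ diffeomorphism onto $]0,1]$ with $f'(0)=1$ but $f'>1$ elsewhere. The associated Young tower construction, along with the reference measure $m$ and the first return time $R$ to $Y$, is carried out in \cite{Gou2004,GouThese}.

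Next I would invoke the tail estimate $m(R>n) = n^{-1/\gamma} s(n)$ obtained by combining Theorem 1.4.10 and Remark 1.4.13 of \cite{Gou2004}, with $s$ slowly varying at infinity. The assumption $\rho(x) \to 0$ as $x\to 0$ transfers, through the same correspondence between the local behavior of $f$ near $0$ and the tail of $R$, into $s(n) \to 0$ at infinity. Setting $p := 1/\gamma$, the hypothesis $\gamma < 1/2$ forces $p>2$, which gives the upper bound $m(R>n) \leq n^{-p} s(n)$ required by Theorem \ref{hip}.

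It remains to exhibit a sequence $(u_n)$ of positive reals tending to $0$ such that $e^{-n u_n} \leq m(R>n)$. Since polynomial decay is subexponential, it suffices to choose $u_n := -n^{-1}\log m(R>n)$: slow variation of $s$ gives $\log s(n) = o(\log n)$, so $u_n = \bigp{(1/\gamma)\log n - \log s(n)}/n = O(\log n / n) \to 0$. All hypotheses of Theorem \ref{hip} are then in force with $p = 1/\gamma$, and the theorem yields the absolute convergence of $\sigma^2(\varphi)$ together with $W_n(\varphi) \to \sigma(\varphi) W$ in $\mathcal H^0_{1/2 - 1/p}[0,1] = \mathcal H^0_{1/2-\gamma}[0,1]$. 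I do not anticipate any substantive obstacle, since the genuinely analytic work is already carried out either by Gouëzel (tails of $R$) or by the authors in Theorem \ref{hip} itself (mixing estimates and the Hölderian invariance principle).
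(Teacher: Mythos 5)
Your proposal is correct and follows essentially the same route as the paper: identify $Hol(\gamma,\rho)$ maps as nonuniformly expanding with basis $Y=]1/2,1]$, import Gouëzel's tail estimate $m(R>n)=n^{-1/\gamma}s(n)$ with $s\to 0$ (from $\rho\to 0$ at $0$), and apply Theorem \ref{hip} with $p=1/\gamma>2$. Your explicit choice $u_n=-n^{-1}\log m(R>n)$ to verify the lower bound $e^{-nu_n}\leq m(R>n)$ is a small detail the paper leaves implicit, and it is handled correctly.
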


For this type of transformations, one can naturally wonder if we can also derive the asymptotic behavior of the partial sums process associated with observables with bounded variation. As we shall see in Section \ref{Section:BV}, it is possible to suitably control some weak dependent coefficients as done in \cite{DGM2010} and then apply the Hölderian invariance principle stated in \cite{DM2017} to get the invariance principle for $\{W_n(\varphi), n \geq 1\}$ when $\varphi$ is a function with bounded variation:
\begin{thm}\label{hip_BV}
    Let $f$ be a Holland map as defined in (\ref{def:Holland_map}) where $\gamma < 1/2$.
    Then for any observable of bounded variation $\varphi$, the series $\sigma^2(\varphi) = Var(\varphi) + 2\sum_{k \geq 1} Cov(\varphi, \varphi \circ f^k)$ converges absolutely and 
     \begin{align*}
        W_n(\varphi) \to \sigma(\varphi)W \text{ in distribution in } \mathcal{H}_{1/2 - 1/p}^0[0,1]
    \end{align*}
    where $W$ is a standard Brownian motion. 
\end{thm}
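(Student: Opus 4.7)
The plan is to follow the roadmap indicated at the end of the introduction: obtain sharp estimates for a weak dependence coefficient adapted to BV observables and then invoke the Hölderian invariance principle of \cite{DM2017} as a black box. Since BV functions are not in general Lipschitz, the $\tau$-mixing coefficients used for Theorem \ref{hip} are not the right tool; instead, following \cite{DGM2010}, I would work with a coefficient of the form
\begin{align*}
    \tilde\phi_\varphi(n) = \sup_{l\ge 1}\frac{1}{l}\sup_{i_1<\cdots<i_l}\sup_{g}\norm{\E\bigl(g(\varphi\circ f^{i_l+n},\ldots)\mid \sigma(\varphi\circ f^{i_1},\ldots,\varphi\circ f^{i_l})\bigr)-\E\bigl(g(\varphi\circ f^{i_l+n},\ldots)\bigr)}_1,
\end{align*}
where $g$ ranges over tensor products of BV functions whose total variation is bounded by $1$. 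This coefficient is well adapted to the dual point of view on dynamical systems and should be the natural analogue of the $\tilde\phi$-coefficient studied in \cite{DGM2010}.

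The first substantive step is to reduce the estimation of $\tilde\phi_\varphi(n)$ to the analysis of the transfer operator of $f$ acting on BV. By reversing time and using the Perron–Frobenius operator $\mathcal L$ of $f$ with respect to $\mu$, one can express each conditional expectation appearing in the definition of $\tilde\phi_\varphi$ in terms of $\mathcal L^n$ applied to a test function constructed from the $\varphi\circ f^{i_j}$'s. Using that BV is stable under multiplication and that characteristic functions of the elements of a Markov partition for the induced system on $Y$ are of BV, one reduces the problem to controlling $\norm{\mathcal L^n h}_{BV}$ for $h\in BV$ with $\int h\,d\mu=0$.

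Second, I would invoke Gouëzel's spectral analysis of the transfer operator for Holland maps on a Young tower (\parencite[Chap.~1]{GouThese} and \cite{Hol05}): with return time $R$ satisfying $m(R>n)=n^{-1/\gamma}s(n)$, the iterates of $\mathcal L$ decay on centered BV observables at rate $O(n^{1-1/\gamma}\tilde s(n))$ for some slowly varying $\tilde s$, which translates into
\begin{align*}
    \tilde\phi_\varphi(n) \leq C\,\norm{\varphi}_{BV}^2\, n^{1-1/\gamma}\tilde s(n).
\end{align*}
The bound on the first-return tail is exactly the ingredient linking the geometry of $f$ to the polynomial decay. With $p=1/\gamma>2$, the summability condition on $\tilde\phi_\varphi$ required by the Hölderian CLT in \cite{DM2017} (typically of the form $\sum n^{p/2-2}\tilde\phi_\varphi(n)<\infty$) is satisfied up to the slowly varying correction, and the Hölder exponent $\eta=1/2-1/p=1/2-\gamma$ matches the claim. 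The absolute convergence of $\sigma^2(\varphi)$ is a direct consequence of the same decay estimate applied to covariances.

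The main obstacle is the second step: BV is not invariant under composition with $f$ in a useful way near the neutral fixed point $0$, so one cannot directly iterate the transfer operator on BV and expect geometric decay. The standard remedy is the Young tower construction with basis $Y=\,]1/2,1]$, which reduces the dynamics to a Gibbs–Markov map with exponential mixing; the polynomial rate for the original map is then recovered by decomposing according to the return time and using the tail assumption on $R$. Making this work with the BV norm, rather than the Hölder norm used in Theorem \ref{hip}, is the technical point where the argument of \cite{DGM2010} must be adapted, and it is essentially where the hypothesis $\gamma<1/2$ (equivalently $p>2$) enters.
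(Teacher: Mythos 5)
Your high-level roadmap (a BV-adapted weak dependence coefficient estimated through the transfer operator, then the Hölderian invariance principle of \cite{DM2017} as a black box) is the same as the paper's, which works with the $\alpha$-dependent coefficients $\alpha_{k,\mathbf{Y}}(n)$ of \cite{DGM2010} and concludes via \parencite[Theorem 3.3]{DM2017}. The problem is that the entire substance of the proof lies in the estimate of these coefficients, and there your argument has a genuine gap: you invoke ``Gouëzel's spectral analysis'' to claim an operator-norm decay $\norm{\mathcal L^n h}_{\mathcal{BV}} = O(n^{1-1/\gamma}\tilde s(n))$ for centered $h \in \mathcal{BV}$. No such result can be cited: for a map with a neutral fixed point the transfer operator has no spectral gap on $\mathcal{BV}$, and what Gouëzel proves is decay of correlations, i.e.\ a weak-norm bound against bounded test functions, not decay of $K^n$ in $\mathcal L(\mathcal{BV})$. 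The paper in fact establishes (and only needs) two weaker statements: uniform boundedness $\norm{dK^n\varphi}\leq C\norm{d\varphi}$ (Proposition \ref{analogue1.15}) and the $L^1(\nu)$ covariance bound $\nu(\abs{K^n\varphi^{(0)}})\leq C n^{1-p}s(n)\norm{d\varphi}$ (Proposition \ref{analogue1.16}); it is this pair, iterated, that yields $\alpha_{k,\mathbf Y}(n)\leq C(k)n^{1-p}s(n)$ (Theorem \ref{thm_alpha}). Obtaining these bounds is exactly the work you defer to ``adapting \cite{DGM2010}'': the decomposition $K^n=\sum_{a+k+b=n}A_a\circ T_k\circ B_b+C_n$, the renewal-equation analysis of $T_n$ through Gouëzel's operator renewal theorem applied to $\Lambda(z)$ acting on $\mathcal{BV}$ (where the spectral gap and the aperiodicity of $id-\Lambda(z)$ require a dedicated argument about BV eigenfunctions, since the Hölder-space proofs do not transfer verbatim), the distortion estimates giving $\norm{\Lambda_n}_{\mathcal L(\mathcal{BV})}=O(n^{-p-1}s(n))$, and Potter-type bounds to carry the slowly varying factor $s$ through all the convolutions. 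Asserting the key decay by citation, in a norm in which it is not available, leaves the central step of the proof missing.

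Two smaller points. First, the condition you quote from \cite{DM2017} (a summability condition with exponent $p/2-2$) is not the relevant one; what must be verified is $n^{p-1}\alpha_{2,\mathbf Y}(n)\to 0$, and this is precisely where the hypothesis $\rho\to 0$ at $0$ (hence $s(n)\to 0$ at infinity) enters — with a bound that is merely $O(n^{1-p})$ the conclusion of \parencite[Theorem 3.3]{DM2017} would not follow. Second, to even apply that theorem one must pass from the iterates of $f$ to a stationary Markov chain with kernel $K$ by time reversal (as the paper does via \parencite[Lemma XI.3]{HH2001}), and one needs the coefficients for blocks of indices (the $k$-fold coefficients $\alpha_{k,\mathbf Y}$, at least $k=2$), which is why the uniform BV bound on $K^n$ is indispensable for the iteration; your proposal does not address either point.
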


        \section{Mixing properties of nonuniformly expanding maps and some applications} \label{Section:HIP}

Let us specify the precise setting we consider, analogously to Section 2 in \cite{CDM2023}. Let $(E,d, \mu)$ be a complete bounded separable metric space with the Borel $\sigma$-algebra. Assume that $f : E \to E$ is a measurable transformation with an inducing scheme as follows: 
\begin{itemize}[label = $\sbt$]
        \item a closed subset $Y \subset E$, referred to as basis, equipped with a measure $m$; 
	\item a countable (infinite) and measurable partition $\Gamma$ of $Y$ such that $m(a) > 0$ for any $a \in \Gamma$; 
	\item an integrable return time $R : Y \to \N^*$ such that $gcd\{R(y) : y \in Y\} = 1$, constant on each $a \in \Gamma$ with value $R(a)$ such that $f^{R(a)}(y) \in Y$ for any $y \in Y$. 
\end{itemize}
We consider the induced map $F_Y : Y \to Y$ defined by $F_Y(y) = f^{R(y)}(y)$ and assume that there exist $\lambda > 1$ and $K > 0$ such that for any $a \in \Gamma$ and any $x,y \in a$, 
\begin{itemize}[label = $\sbt$]
	\item $F_Y$ restricts to a (measure-theoretic) bijection from $a$ to $Y$;
	\item $d(F_Y (x), F_Y (y)) \geq \lambda d(x, y)$;
	\item $d(f^k(x), f^k(y)) \leq  Kd(F_Y (x), F_Y (y))$ for any $0 \leq k \leq R(a)$; 
	\item $\abs{\log |\zeta_a(x)| - \log |\zeta_a(y)| } \leq  Kd(F_Y (x), F_Y (y))$ where $\zeta_a = dm/ dm\circ F_Y$ is the inverse Jacobian of the restriction $F_Y : a \to Y$, 
	\item $m(Y_\omega) = m(\bar{Y_\omega})$ for any finite word $\omega = a_0 a_1 \cdots a_n \in \Gamma$, where $Y_\omega = \cap_{k = 0}^n F^{-k}(a_k)$.
\end{itemize}
We say that the map $f$ as described above is nonuniformly expanding.
For such maps it is well known that there is a unique absolutely continuous $F_Y$-invariant probability measure $\nu_Y$ on Y with $c^{-1} \leq d\nu_Y/dm \leq c$ for some $c > 0$.

\begin{prp}
        \label{prp:estim_tau_weakmoment}
    Let $f$ be a nonuniformly expanding map and $p >1$. Assume that there exists a slowly varying function at infinity $s$ and a sequence $(u_n)_n$ of positive reals tending to 0 such that $e^{-nu_n} \leq m(R>n) \leq n^{-p}s(n)$. Then for any Hölder continuous observable $\varphi$,
    \begin{align}
        \label{estim_tau_weakmoment}
        \tau_{\varphi}(n) = O(n^{1-p}s(n)). 
    \end{align}
\end{prp}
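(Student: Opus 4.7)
The plan is to adapt the coupling approach of \cite{CDM2023} to the slowly-varying regime. Recall that $\tau(\mathcal{M}, Z)$ admits a Kantorovich--Rubinstein interpretation: it is the expectation of the $\ell^1$-Wasserstein distance between the conditional law of $Z$ given $\mathcal{M}$ and the unconditional law of $Z$. Hence, to bound $\tau_\varphi(n)$ it is enough, for each past generated by $(\varphi(f^{i_1}), \ldots, \varphi(f^{i_l}))$, to build a coupling of the future block $(\varphi \circ f^{i_l+n+j})_{j \geq 0}$ with an independent copy having the same marginal, and to control the expected $\ell^1$ distance between the two.

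\textbf{Coupling on the Young tower.} First, I would pass to the Young tower extension $(\hat E, \hat f, \hat\mu)$ of $(E, f, \mu)$, with base $(Y, \nu_Y)$ and height function $R$. The induced map $F_Y$ is uniformly expanding with bounded distortion, so its transfer operator admits a spectral gap on a suitable space of Hölder functions on $Y$; this yields an exponential coupling between two $F_Y$-trajectories that coincide after a geometric number of returns. Lifting this through the projection $\hat E \to E$, one obtains a coupling at the level of $(f,\mu)$ that succeeds as soon as both trajectories have performed at least $\lfloor \alpha n \rfloor$ returns to $Y$ before time $n$, for some small $\alpha > 0$ to be chosen.

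\textbf{Tail estimate and conclusion.} The ``successful'' part of the coupling contributes at most the Hölder seminorm of $\varphi$ times the geometric mixing rate of $F_Y$ raised to a power proportional to $n$, which is exponentially small in $n$. The ``failed'' part is bounded by $2\norm{\varphi}_\infty$ times the probability that fewer than $\lfloor \alpha n \rfloor$ returns to $Y$ have occurred by time $n$. Identifying $\mu$ in terms of $\nu_Y$ on the tower via Kac's formula, this probability reduces to sums of the form $\sum_{k \geq \beta n} m(R > k)$ for some $\beta > 0$. Karamata's theorem on summation of regularly varying sequences then gives
\[
\sum_{k \geq \beta n} m(R > k) \leq \sum_{k \geq \beta n} k^{-p} s(k) = O\bigl(n^{1-p} s(n)\bigr),
\]
which is exactly the rate claimed.

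\textbf{Main obstacle.} The main technical point is to keep the slowly varying factor under uniform control. The comparison of $s(k)$ with $s(n)$ for $k \in [\beta n, n]$ requires the Potter bounds from Annex \ref{annex:slowlyvar}, and the summation asymptotic above requires care near the endpoint of the range. The lower bound $e^{-n u_n} \leq m(R > n)$, together with $u_n \to 0$, is used precisely to certify that the purely exponential remainders produced by the $F_Y$-coupling (of the form $\lambda^{-cn}$ or $e^{-cn}$) are indeed $o(n^{1-p} s(n))$, so that only the polynomial-with-slowly-varying tail survives. Once these two ingredients are in place, the proof follows the template of \cite{CDM2023}, with the polynomial rate $n^{-p}$ there systematically replaced by $n^{-p} s(n)$.
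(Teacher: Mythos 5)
Your route (a direct Young-tower coupling combined with Karamata summation) is genuinely different from the paper's (reduction to Korepanov's word-space Markov chain, tail estimate for $h$ in Proposition \ref{queue_h}, the meeting time $T$ and \refequ{controle_queue_T}, then $\tau_\varphi \lesssim \delta + \beta$ via Berbee's lemma), but as written it has a gap at its central step. The spectral gap of the transfer operator of $F_Y$ only lets you couple two orbits when they are in the base \emph{at the same time}; the returns of the two trajectories are not synchronized, and producing simultaneous returns (this is where $\gcd\{R(y)\}=1$ enters) is precisely the hard part of the argument. So your claim that the lifted coupling ``succeeds as soon as both trajectories have performed at least $\lfloor\alpha n\rfloor$ returns to $Y$ before time $n$'' is not justified: many separate returns do not yield a coupling epoch, and this is exactly what Young's simultaneous-return argument, or in this paper the meeting time $T$ of two copies of the chain together with \parencite[Lemma 3.6]{CDKM2023} and Proposition \ref{queue_h}, are there to handle. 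Moreover, even for the event you do describe (a sum of roughly $\alpha n$ return times exceeding $n$), the bound by a quantity of order $n\, m(R>cn)$, equivalently $\sum_{k\ge\beta n} m(R>k)$, does not follow from Karamata and Potter alone: since $R$ has only a weak moment of order $p$, one needs a truncation plus a Fuk--Nagaev-type inequality (this is the role of \parencite[Theorem 6.2]{Rio2017} in Lemma \ref{control_delta}) or a one-big-jump argument. Karamata gives the order of $\sum_{k\ge\beta n}k^{-p}s(k)$, but not the reduction of the failure probability to that sum. Finally, the time reversal in $\tau_\varphi$ (conditioning on $\mathcal G_{i_l+n}$, the far future of the orbit) and the supremum over $l$ and over $i_1<\cdots<i_l$ are handled in the paper through the identity in distribution \refequ{equal_distrib}; a coupling stated directly on $(E,f,\mu)$ needs an analogous reduction to make the conditional laws and the per-coordinate $\ell^1$ bound precise.

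The role you assign to the hypothesis $e^{-nu_n}\le m(R>n)$ is also misplaced. Since $s$ is slowly varying, Lemma \ref{subpolynomial} (or Lemma \ref{Potter_thm}) gives $n^{1-p}s(n)\ge n^{1-p-\eps}$ for $n$ large, so any exponentially small remainder is automatically $o(n^{1-p}s(n))$; no lower bound on the tail of $R$ is needed for that. In the paper the lower bound is used to obtain the matching lower bound $\Proba_{\mathcal A}(h>n)\ge c_1e^{-nu_n}$ in Proposition \ref{queue_h}, which is what legitimates the meeting-time estimate \refequ{queue_T-h} (the comparison $\Proba(T\ge n)=O(\E_{\mathcal A}[(h-n)_+])$ of \parencite[Lemma 3.6]{CDKM2023} requires the tail of $h$ to be heavier than exponential). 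In your scheme the analogous place where this hypothesis must enter is precisely the coupling-time tail estimate that you have left unproved; to close the argument you would either have to carry out the simultaneous-return coupling with tails $n^{-p}s(n)$ in detail, or follow the paper's reduction to the Markov chain and its meeting time.
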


Extensive research has already been carried out to study the asymptotic behaviour of the process $\{W_n(\varphi)(t) \, : \, t \in [0,1]\}$, as defined in (\ref{def:Wn_phi}), in the space of continuous functions on $[0,1]$ equipped with $\norm{\cdot}_\infty$, to establish its convergence to a Brownian motion. 
Now, it is well known that Brownian motion's paths are almost surely Hölder regular of exponent $\eta \in ]0,1/2[$. Thus, it is natural to consider the Birkhoff sum process as an element of $\mathcal H _\eta [0,1]$ given by all the functions $x : [0,1] \to \R$ such that
\begin{align*}
    w_\eta (x) \eqdef \underset{s \neq t}{\sup} \, \frac{\abs{x(s) - x(t)}}{\abs{s-t}^\eta} < \infty
\end{align*}
and try to establish its weak convergence to a standard Brownian motion $W$ in this function space. 
However, it is worth noting that the Hölder spaces are not separable endowed with the usual norm defined by $N_\eta(x) \eqdef w_\eta(x) + \abs{x(0)}$ neither with any norm $N$  verifying $N(.) \geq cN_\eta(.)$ with $c>0$. To overcome this issue, we follow what has been done in \cite{Cie1960}: we define the analogue of the continuity modulus for Hölderian continuous functions by 
\begin{align*}
    w_\eta (x, \delta) 
    \eqdef 
    \underset{0 < \abs{s-t} < \delta}{\sup} \frac{\abs{x(s) - x(t)}}{\abs{s-t}^\eta}
\end{align*}
and then define $\mathcal H _\eta^0 [0,1] \eqdef \{x \in \mathcal H _\eta [0,1] \, ; \, \lim_{\delta \to 0} w_\eta (x, \delta) = 0\}$. This space endowed with the norm $\norm{x}_\eta \eqdef w_\eta (x, 1) + \abs{x(0)}$, is a separable Banach space. Moreover, the canonical embedding $\mathcal H _\eta^0 [0,1] \to \mathcal H _\eta [0,1]$ is continuous, so that convergence in distribution in $\mathcal H _\eta^0 [0,1]$ also takes place in $\mathcal H _\eta [0,1]$. 
The study of weak invariance principle in these functions spaces has been initiated for i.i.d. random variables by Lamperti (\cite{Lam1962}). Hamadouche (\cite{Ham2000}) extended the weak Hölderian invariance principle to dependent sequences under some sufficient strong mixing conditions. For recent results, see for instance Ra\v{c}kauskas and Suquet (\cite{RS2004}).
The case of $\tau$-mixing sequences has been studied by \parencite[]{Gir2017} whereas the case of $\alpha$-dependent sequences has been established in \parencite[]{DM2017}. In the context of dynamical systems and for bounded observables, the result of Giraudo can be written:
\begin{thm}[Giraudo \cite{Gir2017}]
        \label{thm_gir}
    Let $p > 2$. Assume that $(\varphi \circ f^n)_{n \geq 0}$ is a strictly stationary centered sequence such that 
        \begin{align}\label{cond_Gir_tau}
            \lim\limits_{n \to \infty} n^{p-1}\tau_\varphi(n) = 0. 
        \end{align}
    Then 
        \begin{align*}
            W_n(\varphi) \to \sigma(\varphi)W \text{ in distribution in } \mathcal H _{1/2-1/p}^0 [0,1], 
        \end{align*}
    where 
    $\sigma^2(\varphi) = Var(\varphi) + 2\sum_{k \geq 1} Cov(\varphi, \varphi \circ f^k)$ and $W$ is a standard Brownian motion. 
\end{thm}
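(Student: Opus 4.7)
The plan is to derive Theorem \ref{hip} as a direct consequence of Proposition \ref{prp:estim_tau_weakmoment} combined with Giraudo's Theorem \ref{thm_gir}. The setting of Giraudo's result is immediate to verify: since $(E,d)$ is a bounded metric space, any Hölder continuous observable $\varphi$ is bounded; because $\mu$ is $f$-invariant, the sequence $(\varphi\circ f^n)_{n\geq 0}$ is strictly stationary; and it is centered by assumption. It therefore suffices to check the mixing condition \eqref{cond_Gir_tau}.

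Proposition \ref{prp:estim_tau_weakmoment} applied with the hypothesis $m(R>n) \leq n^{-p}s(n)$ yields $\tau_\varphi(n) = O(n^{1-p}s(n))$. Multiplying by $n^{p-1}$ gives $n^{p-1}\tau_\varphi(n) = O(s(n))$, and since $s$ tends to $0$ at infinity by assumption, we obtain $\lim_{n\to\infty} n^{p-1}\tau_\varphi(n) = 0$. This is precisely condition \eqref{cond_Gir_tau}, so Theorem \ref{thm_gir} delivers the convergence $W_n(\varphi) \to \sigma(\varphi)W$ in distribution in $\mathcal H_{1/2-1/p}^0[0,1]$. Note that the lower bound $e^{-nu_n}\leq m(R>n)$ plays no role in this step; it is needed only upstream, to ensure that the estimate of Proposition \ref{prp:estim_tau_weakmoment} is available.

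To justify the absolute convergence of the variance series, I would invoke the standard covariance inequality for $\tau$-mixing sequences of bounded random variables: there exists a constant $C$ depending only on $\norm{\varphi}_\infty$ such that $\abs{Cov(\varphi, \varphi\circ f^k)} \leq C\tau_\varphi(k)$. Together with the bound $\tau_\varphi(k) = O(k^{1-p}s(k))$ and the fact that $p>2$, standard properties of slowly varying functions (Potter-type bounds, see Annex \ref{annex:slowlyvar}) imply $\sum_{k\geq 1}k^{1-p}s(k) < \infty$, from which the absolute convergence of $\sum_{k\geq 1} Cov(\varphi,\varphi\circ f^k)$ follows, and in particular the value of $\sigma^2(\varphi)$ declared in the statement is finite.

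The main difficulty of this theorem does not lie in the combination above, which is essentially a verification of hypotheses, but rather in the two results being combined. Proposition \ref{prp:estim_tau_weakmoment} requires a coupling argument on the Young tower together with careful control of the slowly varying factor in the tail of $R$, while Theorem \ref{thm_gir} depends on chaining and maximal inequalities tailored to the Hölder modulus. Here both are treated as black boxes, and the proof of Theorem \ref{hip} reduces to the short deduction sketched above.
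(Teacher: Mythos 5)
The statement you were asked to prove is Theorem \ref{thm_gir} itself, i.e.\ Giraudo's Hölderian invariance principle for $\tau$-mixing sequences, but your proposal never engages with it: you expressly treat it as a black box and instead write out the deduction of Theorem \ref{hip} from it together with Proposition \ref{prp:estim_tau_weakmoment}. That deduction is correct and coincides with the paper's own one-sentence argument for Theorem \ref{hip} (boundedness of $\varphi$ since a Hölder observable on a bounded space is bounded, stationarity from the $f$-invariance of $\mu$, $\tau_\varphi(n)=O(n^{1-p}s(n))$ from Proposition \ref{prp:estim_tau_weakmoment}, and $s(n)\to 0$ giving \eqref{cond_Gir_tau}); your extra check that $\abs{Cov(\varphi,\varphi\circ f^k)}\leq \norm{\varphi}_\infty\,\tau_\varphi(k)$ and that $\sum_k k^{1-p}s(k)<\infty$ for $p>2$ is also sound, although the convergence of $\sigma^2(\varphi)$ is already (implicitly) part of the conclusion of Theorem \ref{thm_gir}.

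With respect to the statement actually in question, however, there is a genuine gap: Theorem \ref{thm_gir} is precisely the ingredient you declare out of scope. The paper imports it from \cite{Gir2017} without proof, and your write-up likewise contains none of its substance — no moment or maximal inequality for $S_n(\varphi)$ under the condition $n^{p-1}\tau_\varphi(n)\to 0$, no tightness argument in $\mathcal H^0_{1/2-1/p}[0,1]$ (control of the Hölder modulus $w_{1/2-1/p}(W_n(\varphi),\delta)$, typically via a dyadic chaining criterion à la Ciesielski), and no identification of the limit through a finite-dimensional CLT with limiting variance $\sigma^2(\varphi)$. So as a blind proof of Theorem \ref{thm_gir} the proposal is empty, even though as a proof of Theorem \ref{hip} it matches the paper. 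One further point you gloss over: the $\tau_\varphi$ coefficients of this paper are time-reversed relative to Giraudo's; the paper records explicitly why this does not affect the applicability of his theorem (the conclusion only concerns the distribution of the process), and any argument invoking or reproving Theorem \ref{thm_gir} with the present coefficients needs to address that reversal.
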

\noindent
Consequently, taking into account Proposition \ref{prp:estim_tau_weakmoment} and if $s(n) \xrightarrow[n \to \infty]{} 0$, we get Theorem \ref{hip}.
\\

It is worth noting that compared to the definition taken by \cite{Gir2017} there is a time reversal in the $\tau$-mixing coefficients. Working with coefficients reversed in time is common when considering dynamical systems, but note that this time reversal is not binding for the application of Giraudo's theorem as its result deals with the distribution of the process. 

\comments{Recall the setting we consider: $f$ a $\mu$-invariant nonuniformly expanding transformation of $E$ (see Section 2 in \cite{CDM2023}), $\varphi$ a $\mu$-centered Hölder-continuous  observable. We consider $Y \subset E$ admitting an at most countable partition $\Gamma$ and a \textit{reference measure} $m$ such that $m(a ) > 0$ for any $a \in \Gamma$, and $R$ a return time to the basis $Y$ constant on each $a \in \Gamma$ and such that $m(R) < \infty$. }

With the precise computation of the $\tau$-mixing coefficients we can also prove the following Baum-Katz type result based on \parencite[Remark 2.1]{CDM2023} and \parencite[Theorem 2]{DM2006}. 
\begin{thm}\label{BaumKatz}
    Let $p > 1$. Assume that $\int R^p \, dm < \infty$. Let $\varphi$ be a centered Hölder continuous observable and $x > 0$, then 
    \begin{align*}
         \sum_{n \geq 1} n ^{a p -2} \mu \bigp{\sup_{1 \leq j \leq n} \abs{S_j(\varphi)} \geq xn^a} < \infty
    \end{align*}
    with 
     $a \in \left] \frac 1 2 , 1\right]$ if $p \geq 2$
     and 
     $a \in \left[ \frac 1 p , 1\right]$ if $1 < p < 2$. 
\end{thm}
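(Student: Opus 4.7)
The plan is to combine the $\tau$-mixing estimate of Proposition~\ref{prp:estim_tau_weakmoment} with the abstract Baum--Katz type result of Dedecker and Merlev\`ede \cite{DM2006}, reformulated for dynamical systems in \cite[Remark~2.1]{CDM2023}. The dynamical content is essentially exhausted by the $\tau$-mixing bound already established; the rest is a purely probabilistic argument for stationary weakly dependent sequences.

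First, since $\int R^p \, dm < \infty$, Markov's inequality gives $m(R > n) \leq \|R\|_p^p \, n^{-p}$, so the tail assumption of Proposition~\ref{prp:estim_tau_weakmoment} holds with the (trivially slowly varying) constant function $s \equiv \|R\|_p^p$; the complementary lower bound $e^{-n u_n} \leq m(R>n)$ is classical for the nonuniformly expanding maps under consideration (and is automatic unless $R$ is essentially bounded, a case in which the statement reduces to the i.i.d.\ Baum--Katz theorem applied to the induced map). Consequently, for any centered Hölder continuous observable $\varphi$ one obtains $\tau_\varphi(n) = O(n^{1-p})$.

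Second, I would invoke Theorem~2 of \cite{DM2006} together with the reformulation in \cite[Remark~2.1]{CDM2023} applied to the stationary sequence $(\varphi \circ f^n)_{n \geq 0}$. Since $E$ is bounded and $\varphi$ is Hölder continuous, $\varphi$ is bounded, so all moment hypotheses required by those statements are trivially met. Specialising to the polynomial rate $\tau_\varphi(n) = O(n^{1-p})$ yields exactly
\[
\sum_{n \geq 1} n^{ap-2} \mu\Big(\sup_{1 \leq j \leq n} |S_j(\varphi)| \geq x n^a\Big) < \infty,
\]
in the two announced ranges: $a \in (1/2, 1]$ when $p \geq 2$ (the central-limit regime, governed by the second moment of the partial sums) and $a \in [1/p, 1]$ when $1 < p < 2$ (the sub-CLT regime, controlled by the $p$-th moment).

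The main obstacle is not the dynamical input but the passage from the $\tau$-mixing rate to the Baum--Katz summability. Typically this proceeds via a Fuk--Nagaev type maximal inequality for $\tau$-mixing sequences, converting polynomial decay of the dependence coefficients into polynomial tail bounds for $\sup_{1 \leq j \leq n} |S_j(\varphi)|$, followed by a dyadic-block rearrangement of the series over $n$; both ingredients are already assembled in \cite{DM2006} and \cite{CDM2023}, so I would cite them directly rather than redo the estimate. Finally, the time reversal in our definition of $\tau_\varphi$ (as discussed after Theorem~\ref{thm_gir}) is harmless here, since the Baum--Katz statement depends only on the joint distribution of $(\varphi \circ f^j)_{j \geq 1}$.
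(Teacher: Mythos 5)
There is a genuine quantitative gap in your second step. By replacing the hypothesis $\int R^p\,dm<\infty$ with the weak tail bound $m(R>n)\leq \norm{R}_p^p\, n^{-p}$ (Markov), you can only extract from Proposition \ref{prp:estim_tau_weakmoment} the rate $\tau_\varphi(n)=O(n^{1-p})$. But the condition that actually has to be verified to apply Theorem 2 of \cite{DM2006} to the bounded stationary sequence $(\varphi\circ f^n)_n$ --- this is the point of Remark 2.1 in \cite{CDM2023} --- is the weighted summability $\sum_{k\geq 1}k^{p-2}\tau_\varphi(k)<\infty$, and the rate $O(n^{1-p})$ sits exactly at the critical exponent: $\sum_k k^{p-2}\,k^{1-p}=\sum_k k^{-1}=\infty$. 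So your reduction cannot reach the conclusion at the announced exponents (e.g.\ $a=1/p$ for $1<p<2$), and no Fuk--Nagaev or blocking argument downstream can repair it: the information discarded by Markov's inequality (the difference between a strong and a weak moment of order $p$) is precisely what the Baum--Katz statement needs.

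The paper's proof keeps this strength. It does not go through Proposition \ref{prp:estim_tau_weakmoment} at all, but through the intermediate estimate \refequ{estim_tau_phi}, $\tau_{\varphi}(n) = O\bigp{n^{-r/2} + \Proba(T \geq \lfloor n/r\rfloor) + \Proba(T \geq \lfloor n/2\rfloor)}$, together with Lemma 3.1 of \cite{CDKM2020b}, which converts $\int R^p\,dm<\infty$ into $\E(T^{p-1})<\infty$; by Fubini this gives $\sum_k k^{p-2}\Proba(T\geq k)<\infty$ and hence Lemma \ref{remark2.1}, after which \cite{DM2006} applies. Incidentally, this route also disposes of your side assertion that the lower bound $e^{-nu_n}\leq m(R>n)$ is ``classical''/automatic: it is an assumption of Proposition \ref{prp:estim_tau_weakmoment}, not a consequence of the setting, and the correct proof simply never needs it because it works directly with the tail of the meeting time $T$.
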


By standard arguments using the Borel-Cantelli lemma and taking $a = 1/p$, we get the following corollary.

\begin{cor}
    Let $1 < p < 2$. Assume that $\int R^p \, dm < \infty$, then 
    \begin{align*}
        n^{-1/p}S_n(\varphi) \xrightarrow[n \to \infty]{} 0 \quad \mu-\text{almost surely}.
    \end{align*}
\end{cor}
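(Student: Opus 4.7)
The plan is to apply Theorem~\ref{BaumKatz} with $a = 1/p$ (admissible since $1 < p < 2$ makes $1/p$ lie in $[1/p, 1]$), and then convert the resulting logarithmically weighted summability into an honest Borel--Cantelli input via a dyadic blocking trick. With this choice of $a$ one has $ap - 2 = -1$, so Theorem~\ref{BaumKatz} delivers
\begin{align*}
    \sum_{n \geq 1} \frac{1}{n}\, \mu\bigp{M_n \geq \eps\, n^{1/p}} < \infty
    \quad \text{for every } \eps > 0,
\end{align*}
where $M_n \eqdef \sup_{1 \leq j \leq n} \abs{S_j(\varphi)}$.

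Since $M_n$ is nondecreasing in $n$, for any $n$ lying in the dyadic block $[2^k, 2^{k+1})$ one has the inclusion $\{M_{2^k} \geq \eps\, 2^{(k+1)/p}\} \subset \{M_n \geq \eps\, n^{1/p}\}$ (use $M_{2^k} \leq M_n$ and $n^{1/p} \leq 2^{(k+1)/p}$). Regrouping the above series along dyadic blocks and using the elementary lower bound $\sum_{n = 2^k}^{2^{k+1}-1} n^{-1} \geq 1/2$ then yields
\begin{align*}
    \sum_{k \geq 0} \mu\bigp{M_{2^k} \geq \eps\, 2^{(k+1)/p}} < \infty
\end{align*}
for every $\eps > 0$. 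Applying the Borel--Cantelli lemma along a sequence $\eps_\ell \downarrow 0$ gives $2^{-k/p} M_{2^k} \to 0$ $\mu$-almost surely.

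To upgrade this subsequential statement to the full sequence, pick, for an arbitrary $n \geq 1$, the unique $k$ with $n \in [2^k, 2^{k+1})$; monotonicity of $M_n$ yields
\begin{align*}
    n^{-1/p} \abs{S_n(\varphi)} \leq 2^{-k/p} M_{2^{k+1}} = 2^{1/p}\, \cdot\, 2^{-(k+1)/p} M_{2^{k+1}},
\end{align*}
which tends to $0$ $\mu$-almost surely by the previous step. This is the desired conclusion. The only point requiring care is the dyadic passage from the logarithmically weighted sum to a genuinely summable subsequence; everything else is routine, so I do not anticipate any real obstacle.
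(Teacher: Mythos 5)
Your proof is correct and is exactly the argument the paper has in mind: it applies Theorem \ref{BaumKatz} with $a=1/p$ (so $ap-2=-1$) and then the standard dyadic-blocking/Borel--Cantelli device, which is precisely what the paper summarizes as ``standard arguments using the Borel--Cantelli lemma and taking $a=1/p$''. No gaps; the blocking step and the passage from the subsequence $2^k$ to all $n$ are handled correctly.
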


\begin{proof}[Proof of Theorem \ref{BaumKatz}]
First, let us prove the following result on $\tau$-mixing coefficients mentioned in \parencite[Remark 2.1]{CDKM2023}.
\begin{lem}\label{remark2.1}
    Let $p > 1$. Assume that $\int R^p \, dm < \infty$, then 
    \begin{align*}
        \sum_{k=1}^{\infty}k^{p-2}\tau_{\varphi}(k) < \infty.
    \end{align*}
\end{lem}
\begin{proof}[Proof of the lemma]
    Taking $T$ as defined in (\ref{def_T}) and according to the upper bound (\ref{estim_tau_phi}) of Subsection \ref{subsubsec:estim_tau} with $r$ large enough, it is enough to prove that the series $\sum_{k\geq 1}k^{p-2}\Proba(T \geq k)$ converges.
    Applying Fubini, it is sufficient to show that $\E(T^{p-1}) < \infty$. 
    The result follows from \parencite[Lemma 3.1]{CDKM2020b}.
\end{proof}

Since our observables are bounded, Theorem \ref{BaumKatz} follows by combining \parencite[Theorem 2]{DM2006} with Lemma \ref{remark2.1}. 
\end{proof}
    
        \section{Proof of Proposition \ref{prp:estim_tau_weakmoment}}

\subsection{Reduction to Markov trajectories }

As done in different papers, we shall use the construction of a Markov chain in order to bring us back to the simpler framework of the study of partial sums $\sum X_k$. 

We refer to \cite{Kor2018} and \parencite[Part 2.3]{CDKM2020b} for the construction of this Markov chain through a Young towers' process. 
Let $\mathcal A$ denote the set of all finite words in $\Gamma$, not including the empty word. For $\omega = a_0 \cdots a_{n-1} \in \mathcal A$ we denote $h(\omega) = R(a_0) + \cdots + R(a_{n-1})$. 
There exists a probability measure $\Proba_{\mathcal A}$ that weighs any element of $\mathcal A$ and there exists $\xi \in ]0,1[$ such that $\Proba_{\mathcal A}(h = n) = (1- \xi)^n \xi$ for any $n \geq 0$. Furthermore, note that according to Proposition 4.4 and Proposition 4.5 in \cite{Kor2018}, the return time tails are closely related to those of $h$:
\begin{itemize}
    \item if $m(R \geq n) = O(n^{-p})$ with $p > 1$, then $ \Proba_{\mathcal A}(h \geq n) = O(n^{-p})$, 
    \item if $\int R^p dm < \infty$ with $p > 1$, then $\int h^p d\Proba_{\mathcal A} < \infty$. 
\end{itemize}
We will adapt the proof to our context and show in Proposition \ref{queue_h}  that if, for any $n$, one has $e^{-nu_n} \leq m(R > n) \leq n^{-p}s(n)$ with $p > 0$, $s$ a slowly varying function and $(u_n)_n$ a sequence of positive reals that converges to 0, then for any $n$, $
        c_1 e^{-nu_n} \leq \Proba_{\mathcal A}(h \geq n) \leq c_2 n^{-p}s(n)
    \label{link_h_R_extended}$
for some $c_1, c_2 >0$.

As done in \cite{CDKM2020b} for the particular case of nonuniformly expanding dynamical systems, the random process $S_{n}(\varphi)$ can be redefined on a Markov shift without changing its distribution, whose structure is as follows.
Consider $S=\{(w, \ell) \in \mathcal{A} \times \mathbb{Z}: 0 \leq \ell<h(w)\} $ and define the separation metric $d_{\mathcal{A}^{\mathbb{N}}}$ by $d_{\mathcal{A}^{\mathbb{N}}}((w_n)_n,(w_n')_n) = \xi ^{-\inf \{ n : w_n \neq w_n' \}}$. According to \parencite[Corollary 2.5]{CDKM2020b}, given $f$ a nonuniformly expanding map and $\varphi$ a Hölder continuous observable, there exist a stationary Markov chain $(g_n)_{n \geq 0}$ on $S$ 
with stationary measure $\nu$
and $\psi$ a real-valued Hölder continuous function with respect to  $d_{\mathcal{A}^{\mathbb{N}}}$ defined on the space $\Omega$ of possible trajectories of $(g_n)_n$ such that 
    \begin{align}\label{equal_distrib}
        \left\{\varphi \circ f^{n}\right\}_{n \geq 0} \stackrel{\law}{=}\left\{\psi\left(g_{n}, g_{n+1}, \ldots\right)\right\}_{n \geq 0}
    \end{align}
respectively on $([0,1], \mu)$ and $(\Omega, \Proba_\Omega)$. 
Moreover, we can assume that $\left(g_{n}\right)_{n \geq 0}$ is generated by a sequence of independent innovations as follows. Let $g_{0} \sim \nu$ and let $\varepsilon_{1}, \varepsilon_{2}, \ldots$ be a sequence of independent (also from $g_{0}$) and identically distributed random variables with values in $\mathcal{A}$ and distribution $\mathbb{P}_{\mathcal{A}}$. Consider $U:(S, \mathcal A) \to S$ defined by 
\begin{equation*}
    U((\omega, \ell), \varepsilon) = \left\{
    \begin{array}{ll}
        (\omega, \ell +1) & \text{if } h(\omega) < \ell -1 \\
        (\eps, 0) &  \text{if } h(\omega) = \ell -1
    \end{array}
    \right. 
\end{equation*}
and for any $n\in \N$, $g_{n+1}=U\left(g_{n}, \varepsilon_{n+1}\right)$. Then $(g_n)_n$ has the same distribution as the previous Markov chain. 
Finally, note that $\left(g_{n}\right)_{n \geq 0}$ is $\beta$-mixing in the sense
\begin{align*}
    \beta(n) = \frac 1 2 \nu \bigp{\underset{\norm{\phi}_\infty \leq 1}{\sup}\abs{K^n(\phi) - \nu(\phi)}} \xrightarrow[n \to +\infty]{} 0
\end{align*}
where $K$ is the Kernel associated to the Markov chain. 
\noindent
If we denote
$$
X_{n} \eqdef \psi\left(g_{n}, g_{n+1}, \ldots\right) \quad \text { and } \quad S_{n}=\sum_{k=1}^{n} X_{k},
$$
the Hölderian invariance principle for $S_{n}(\varphi)$ is now reduced to the one for $S_{n}$. Note that the Hölder property of $\varphi$ is fundamental to get (\ref{equal_distrib}).

 \subsection{Computation of the \texorpdfstring{$\tau$}{t}-mixing coefficients}
    
    To ensure that (\ref{estim_tau_weakmoment}) holds, we will work on intermediate inequalities linking respectively the variables $R$, $h$, the meeting time (as defined above) and the $\tau_\varphi$-mixing coefficient. 
    \\
In what follows, we consider the following definitions: 
\begin{align}
        \label{def_tribuFG}
    \calF_i = \sigma( X_k \, : \, k \leq i)
    \qquad \text{and} \qquad 
    \mathcal G_i = \sigma(X_k \, : \, k \geq i). 
\end{align}
With these notations and with \refequ{equal_distrib} in mind, we can write 
\begin{align*}
    \tau_\varphi(n) = \underset{l \geq 1}{\sup} \frac{1}{l} \underset{1 \leq i_1 < \cdots < i_l}{\sup} \tau(\mathcal G _{i_l + n}, (X_{i_1}, \cdots, X_{i_l})). 
\end{align*}
    \\

    \noindent
    As a starting point, recall that for some $p >1$, there exist a slowly varying function $s$ that converges to 0 at infinity and a sequence $(u_n)_n$ that converges towards 0 such that
    \begin{align} \label{queue_m}
        m(R > n) = O(n^{-p} s(n)) \text{ and } m(R > n)  \geq e^{-nu_n}. 
    \end{align}
\subsubsection{Tail estimates of \texorpdfstring{$h$}{}}
Using Korepanov's arguments in \parencite[Section 4.3]{Kor2018} for strong and weak polynomial moments of $h$, we prove the following behavior of $h$ in our context.   
\begin{prp}\label{queue_h}
    There exist $c_1, c_2 >0$ such that for any $n$,
    \begin{align}\label{ineq_queue_h}
        c_1 e^{-nu_n} \leq \Proba_{\mathcal{A}}(h > n) \leq c_2 n^{-p}s(n)
        .
    \end{align}
\end{prp}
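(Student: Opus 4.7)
The plan is to adapt Korepanov's arguments from Propositions 4.4 and 4.5 in \cite{Kor2018}, where the tail behaviour $m(R \geq n) = O(n^{-p})$ is transferred to $\Proba_{\mathcal A}(h \geq n) = O(n^{-p})$, to our slightly more general setting. The two new ingredients are that the slowly varying factor $s$ must be tracked through the computation, and that a lower bound on $\Proba_{\mathcal A}(h \geq n)$ has to be produced. Under $\Proba_{\mathcal A}$, the length $\abs{\omega}$ of the word is geometric with parameter $\xi$ and, conditionally on $\abs{\omega} = k$, the letters $a_0, \ldots, a_{k-1}$ are independent with a common distribution $\pi$ on $\Gamma$ proportional to the restriction of $m$; in particular $\pi(\{a \in \Gamma : R(a) > n\})$ is of the same order as $m(R > n)$, and $h(\omega) = \sum_{i < \abs{\omega}} R(a_i)$ is a random sum of i.i.d. variables whose common tail is that of $R$ under $m$.

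For the lower bound I would isolate the single-letter contribution: restricting to the event $\{\abs{\omega} = 1\}$ and using that its unique letter $a_0$ has distribution $\pi$,
\begin{align*}
    \Proba_{\mathcal A}(h > n)
    \geq \Proba_{\mathcal A}(\abs{\omega} = 1,\, R(a_0) > n)
    = \xi \, \pi(\{a \in \Gamma : R(a) > n\})
    \geq c \, m(R > n)
    \geq c_1 \, e^{-n u_n}.
\end{align*}

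For the upper bound I would split according to the word length. Fixing a cutoff $L_n = \lfloor n/(\log n)^2 \rfloor$ one writes
\begin{align*}
    \Proba_{\mathcal A}(h > n)
    \leq \Proba_{\mathcal A}(\abs{\omega} > L_n)
    + \sum_{k = 1}^{L_n} \Proba_{\mathcal A}(\abs{\omega} = k) \, \pi^{\otimes k}\bigp{R(a_0) + \cdots + R(a_{k-1}) > n}.
\end{align*}
The first term is super-polynomially small by the geometric tail of $\abs{\omega}$. For the second term, the union bound $\pi^{\otimes k}(\sum_{i < k} R(a_i) > n) \leq k \, \pi(R > n/k)$ combined with the assumption gives a contribution of order $k^{1+p} n^{-p} s(n/k)$ per value of $k$, which remains to be summed against the geometric weights $(1-\xi)^{k-1}\xi$.

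The main obstacle is the comparison of $s(n/k)$ with $s(n)$ uniformly for $k \leq L_n$, because a crude bound would destroy the slow variation. I would handle this by invoking Potter's bound from Annex \ref{annex:slowlyvar}: for any $\delta > 0$ and $n$ large enough, $s(n/k)/s(n) \leq 2 k^{\delta}$ uniformly in $k \leq n$. Fixing any such $\delta$, the series $\sum_{k \geq 1} (1-\xi)^{k-1} k^{1+p+\delta}$ converges since geometric decay dominates polynomial growth, and the whole sum collapses to $c_2 \, n^{-p} s(n)$, as claimed.
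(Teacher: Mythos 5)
Your argument is correct and follows essentially the same route as the paper: the lower bound comes from restricting to single-letter words and comparing the letter distribution with $m$ (the paper passes through $\nu_Y$ and the density bounds $c^{-1}\leq d\nu_Y/dm\leq c$), while the upper bound sums $k\,m(R>n/k)$ against the geometric weights of the word length and controls $s(n/k)/s(n)$ by Potter's bound (Lemma \ref{Potter_thm}), exactly as in the paper, which cuts at $k\leq n^{1/2}$ rather than your $n/(\log n)^2$. The only real difference is that you re-derive the intermediate inequality $\Proba_{\mathcal A}(h>n)\leq C\sum_{k} k\, m(R>n/k)(1-\xi)^k$ from the product structure of $\Proba_{\mathcal A}$, whereas the paper simply quotes it as Lemma 4.3 of Korepanov; just note that Potter's bound needs $n/k$ bounded away from small values, which your cutoff guarantees, so the phrase ``uniformly in $k\leq n$'' should be read as ``uniformly in $k\leq L_n$''.
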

\begin{proof}
     From Lemma 4.3 in \parencite{Kor2018} 
    \begin{align*}
        \Proba_{\mathcal{A}}(h > n)  
        \leq c\xi\sum_{j=1}^{\infty} j \, m(R>n/j)(1-\xi)^j
        .
    \end{align*}
    We split the sum into two parts to leverage the asymptotic behavior of the slowly varying function $s$.
    On the one hand, taking into account \refequ{queue_m}, 
    \begin{align*}
        \sum_{j \leq n^{1/2}} j \, m(R>n/j)(1-\xi)^j 
        &\leq Cn^{-p} \sum_{j \leq n^{1/2}} j^{1+p}(1-\xi)^j s(n/j).
    \end{align*}
    As $s$ is slowly varying, and both $n,n/j \geq n^{1/2}$ can be arbitrary large, for $n$ large enough, by applying Lemma \ref{Potter_thm} we get
    $\frac{s(n/j)}{s(n)} \leq 2j$.
    Thus, 
    \begin{align*}
        \sum_{j \leq n^{1/2}} j^{1+p}(1-\xi)^j \frac{s(n/j)}{s(n)} 
        \leq 
        C\sum_{j \geq 1} j^{2+p} (1-\xi)^j
        < \infty. 
    \end{align*}
    Note that, $ j(1-\xi)^j = o((1-\xi/2)^j) $ so that $\sum_{j > n^{1/2}} j(1-\xi)^j = o(n^{-p-1})$.
    Hence we obtain the following upper bound:
    \begin{align*}
        \Proba_{\mathcal{A}}(h > n) \leq Cn^{-p}s(n). 
    \end{align*}
   On the other hand, one has
    $
       \Proba_{\mathcal{A}}(h > n) \geq \Proba_{\mathcal{A}}(\mathcal A_0, R(a_0) > n)
    $
    where $\mathcal A_0$ denotes the set of words of length 1.
    Hence, applying \parencite[Lemma 4.3]{Kor2018},
   $
        \Proba_{\mathcal{A}}(h > n) \geq \tilde c\nu_Y(R > n).
    $
    Finally, combining the latter inequality with the estimate $c^{-1} \leq d\nu_Y/dm \leq c$ for some $c > 0$ and  (\ref{queue_m}), we get $ \Proba_{\mathcal{A}}(h > n) \geq \Tilde Ce^{-nu_n}$ for some $\Tilde C > 0$.
\end{proof}

\subsubsection{Tail estimates of the meeting time}

\noindent 
Consider $g_0^*$ a random variable with distribution $\nu$, independent of $(g_0, (\eps_n)_{n \geq 1})$ and define the Markov chain $(g_n^*)_n$ by $g_{n+1}^* = U(g_n^*, \eps_{n+1})$. Define the meeting time as in \parencite[]{CDKM2023}:
\begin{align}\label{def_T}
    T = \inf \{ n \geq 0 \, : \, g_n = g_n^*\}. 
\end{align}
We aim to estimate $\Proba(T \geq n)$. 
Using both lower and upper bounds in (\ref{ineq_queue_h}), we can apply \parencite[Lemma 3.6]{CDKM2023}. We get
\begin{align}
        \label{queue_T-h}
    \Proba(T \geq n) = O\bigp{\E_{\mathcal A}\bigcro{(h - n)_+}}. 
\end{align}
\\
Now, for any $n$,  
$
    \E_{\mathcal A}\bigcro{(h - n)_+} 
    = \sum_{k > n} \Proba_{\mathcal A}(h \geq k)
$ 
and combining Lemma \ref{Potter_thm}
with Proposition \ref{queue_h}, for $n$ large enough,
\begin{align}
    &\hspace{15pt}\sum_{k>n}\Proba_{\mathcal{A}}(h \geq k ) 
    \leq c_2\sum_{k>n}k^{-p}s(k)
    \leq 2^{(p-1)/2}c_2 \frac{s(n)}{n^{(p-1)/2}}\sum_{k>n} k^{-(p+1)/2} \label{major_series_h}
\end{align}
where the remainder can be explicitly calculated. 
\noindent 
Hence, from (\ref{queue_T-h}) and (\ref{major_series_h}),
\begin{align}\label{controle_queue_T}
    \Proba(T \geq n) = O(n^{1-p}s(n)). 
\end{align}

Finally, we establish a connection between the $\tau_\varphi$-mixing coefficients and the tail behavior of $T$, which allows us to prove that (\ref{estim_tau_weakmoment}) holds. 

\subsubsection{Estimation of the \texorpdfstring{$\tau_\varphi$}{}-mixing coefficients}
    \label{subsubsec:estim_tau}

\noindent
Let $\psi$ be the Hölder continuous observable defined in \refequ{equal_distrib}.  
Let $(\eps_n')_n$ be an independent copy of $(\eps_n)_n$, independent of $g_0$. 
In what follows, we denote  by $(g_i, ..., g_{i+k}, g'_{i+k+1}, g'_{i+k+2}, ...)$ the Markov chain where we use the innovation $(\varepsilon'_n)_n$ from time $i+k+1$ to generate the trajectories of the chain (with this notation $g'_{i+k+1}=U(g_{i+k}, \varepsilon'_{i+k+1})$, $g'_{i+k+2}= U(g'_{i+k+1}, \varepsilon'_{i+k+2})$, and so on...).
For $n \geq 0$, define 
\begin{align*}
    &\delta(n) \eqdef \E\left( \abs{
        \psi(g_0, g_1, \cdots, g_n, g_{n+1}, g_{n+2}, \cdots) - \psi(g_0, g_1, \cdots, g_n, g_{n+1}', g_{n+2}', \cdots)
    }\right)
\end{align*}
that compares Markov chains with the same $n$ first terms but different innovations from rank $n+1$. 
In what follows, another useful tool will be $\beta$-mixing coefficients defined as follows:
\begin{align*}
    \beta(n) = \underset{k \in \Z}{\sup} \,  \beta(\calF_k, \mathcal G_{n + k}) = \frac 1 2 \nu \bigp{\underset{\norm{\phi}_\infty \leq 1}{\sup}\abs{K^n(\phi)}},
\end{align*}
where 
    $\beta({\mathcal U}, {\mathcal V})= 1/2 \, \E(\sup\{ |\E(V|{\mathcal U})-\E(V)| \, : \, V \in L^{\infty}({\mathcal V}), ||V||_{\infty} \leq 1\})
    $
and $(\calF_n)_n$, $(\mathcal G _n)_n$ are defined in (\ref{def_tribuFG}).
\\
First, let us establish an estimation of $\delta(n)$ as has been done when $\E[T] < \infty$ in \parencite[Proposition 3.2]{CDKM2020b}.
\begin{lem}\label{control_delta}
Assume that $T$ admits a finite (strong) moment of order $q > 0$. Then, for any $r \geq 1$, 
        \begin{align*}
            \delta(n) \leq C(r)n^{-\frac{r\min(q,1)}{2}} + C(r)\Proba(T \geq [n/r])
        \end{align*}
    where $C(r)$ does not depend on $n$.
\end{lem}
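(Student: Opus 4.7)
The strategy is to adapt \cite{CDKM2020b}, Proposition 3.2, which treats the case $\E[T] < \infty$, to the general moment setting. The two trajectories appearing in the definition of $\delta(n)$ agree on coordinates $0, \ldots, n$ and differ only afterwards through the independent innovations $(\varepsilon_j')_{j > n}$. The plan is to couple each trajectory tail with a stationary chain sharing the same innovations, so that after the associated meeting time the pair becomes identical, then split according to whether coupling occurs quickly.

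Concretely, I set $k := \lfloor n/r \rfloor$ and introduce two independent $\nu$-distributed variables $\bar g_0, \bar g_0^{\sharp}$, independent of $(g_0, (\varepsilon_j)_j, (\varepsilon_j')_j)$. I define stationary chains by $\bar g_{m+1} = U(\bar g_m, \varepsilon_{m+1})$ and $\bar g_{m+1}^{\sharp} = U(\bar g_m^{\sharp}, \varepsilon_{m+1}^{\sharp})$, where $\varepsilon_j^{\sharp} = \varepsilon_j$ for $j \leq n$ and $\varepsilon_j^{\sharp} = \varepsilon_j'$ for $j > n$. By shift-invariance of $\nu$, the meeting times $\bar T = \inf\{m \geq 0 : g_m = \bar g_m\}$ and $\bar T^{\sharp} = \inf\{m \geq 0 : g_m^{(n)} = \bar g_m^{\sharp}\}$, where $g^{(n)}$ denotes the trajectory that uses $\varepsilon$ up to time $n$ and $\varepsilon'$ afterwards, both have the same law as $T$; after the respective meeting times, each original tail coincides with its stationary auxiliary.

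I would then split $\delta(n)$ on the event $A := \{\bar T \leq k\} \cap \{\bar T^{\sharp} \leq k\}$. On $A$, the tails $(g_m)_{m \geq k}$ and $(g_m^{(n)})_{m \geq k}$ coincide with $(\bar g_m)_{m \geq k}$ and $(\bar g_m^{\sharp})_{m \geq k}$, which are measurable with respect to $\sigma$-algebras independent of the early coordinates of the original chain. Combined with the Hölder continuity of $\psi$ and the Markov property, this allows one to replace the first $k$ coordinates of each trajectory by independent copies and to extract a polynomial bound $n^{-r\min(q,1)/2}$, by iterating the coupling argument $r$ times, each step contributing a factor scaling like $n^{-\min(q,1)/2}$ via a Cauchy--Schwarz-type estimate involving the available moment $q$ of $T$. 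On the complementary event $A^c$, the trivial bound $|\psi(w) - \psi(w')| \leq 2\|\psi\|_{\infty}$ together with the union estimate $\Proba(A^c) \leq 2\Proba(T \geq k)$ yields the term $C(r)\Proba(T \geq \lfloor n/r \rfloor)$.

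The main obstacle, in my view, is organizing the iteration cleanly: each step consumes a fraction of the available length, and one must keep track of how constants accumulate with $r$ while preserving the tightness of the bound. The dichotomy encoded by $\min(q,1)$ reflects the separation between the weak-moment regime $q \leq 1$, where one must truncate and use weak-$L^q$ controls on $T$, and the strong-moment regime $q \geq 1$, where the Cauchy--Schwarz-type step uses the full moment and gives the cleaner exponent $r/2$.
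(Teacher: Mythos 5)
Your proposal has a genuine gap: it never explains why $\delta(n)$ is small on the good event, and the step producing the polynomial term $n^{-r\min(q,1)/2}$ is not an argument. Recall what $\delta(n)$ measures: the two trajectories already agree on the coordinates $0,\dots,n$ and differ only afterwards, so coupling their \emph{tails} with stationary auxiliary chains does nothing to make $\abs{\psi(g_0,\dots,g_n,g_{n+1},\dots)-\psi(g_0,\dots,g_n,g_{n+1}',\dots)}$ small --- the difference is governed by how sensitive $\psi$ is to the far coordinates. The actual mechanism (inequality (3.5) of CDKM2020b, which the paper invokes) is that $\psi$ is H\"older for the symbolic metric on $\mathcal A^{\N}$, so the difference is exponentially small in the number of completed symbols, i.e.\ in the number of visits of $(g_i)_{i\leq n}$ to the base level $S_0=\mathcal A\times\{0\}$; this reduces the problem to the lower-deviation probability $\Proba\bigp{\abs{\sum_{i=0}^n \indic_{g_i\in S_0}-(n+1)\nu(S_0)}>\tfrac12(n+1)\nu(S_0)}$. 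Your scheme contains no substitute for this reduction: on your event $A$ you only know the tails agree with stationary copies, which is irrelevant to the size of the difference of $\psi$-values.

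The second gap is the source of the exponent $r\min(q,1)/2$. In the paper this comes from applying Rio's Fuk--Nagaev inequality (Theorem 6.2 in \cite{Rio2017}) to the bounded variables $\indic_{g_i\in S_0}$: it yields $C(s_n^2/n^2)^{r/2}+C\,\Proba(T\geq[n/r])$, and the variance term is controlled by $\abs{\mathrm{Cov}(\indic_{g_i\in S_0},\indic_{g_j\in S_0})}\leq 2\beta(\abs{i-j})$ together with $\beta(i)\leq\Proba(T\geq i)\leq \E[T^q]\,i^{-q}$, giving $s_n^2=O(n^{2-q})$ for $q<1$ and hence $n^{-rq/2}$. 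Your proposed replacement --- ``iterate the coupling $r$ times, each step contributing a factor $n^{-\min(q,1)/2}$ via a Cauchy--Schwarz-type estimate'' --- has no mechanism behind it: a single coupling failure has probability of order $\Proba(T\geq n/r)\asymp n^{-q}$, not $n^{-q/2}$, successive ``iterations'' are driven by the same innovations and are not independent, so their failure probabilities cannot simply be multiplied, and nothing in your set-up produces the Gaussian-type power $r/2$ that a Fuk--Nagaev or Rosenthal-type moment inequality provides. The numerology of your claimed bound matches the statement, but neither the good-event estimate nor the accumulation of the $r$ factors is justified, so the proof does not go through as written.
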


The case $q \geq 1$ has already been established in \parencite[Proposition 3.2]{CDKM2020b} based on a Fuk-Nagaev type theorem due to \cite{Rio2017}. The same proof with slightly more precise estimates will extend the result to the case $ 0 < q < 1$, let us give the details. 
\begin{proof}[Proof of Lemma 3.2 (0<q<1)]
According to (3.5) in \cite{CDKM2020b}, there exist $c > 0$ and $a \in ]0,1[$ such that
\begin{align*}
    \delta(n) \leq c a^n + c \Proba \left( \abs{ \sum_{i=0}^{n} \indic_{g_i \in S_0} - (n+1)\nu(S_0)} > \frac{1}{2}(n+1)\nu(S_0) \right) 
\end{align*}
where $S_0 = \mathcal{A} \times \{0\}$. 
We apply Fuk-Nagaev's Theorem 6.2 from \parencite{Rio2017} to the bounded random variables $\indic_{g_i \in S_0}$. Denoting 
    $s_n^2 = \sum_{0\leq i,j \leq n} \abs{\text{Cov}(\indic_{g_i \in S_0}, \indic_{g_j \in S_0})}$, 
we obtain for any $r \geq 1$,
\begin{align*}
    \Proba \bigp{ \abs{ \sum_{i=0}^{n} \indic_{g_i \in S_0} - (n+1)\nu(S_0)} > \frac{1}{2}(n+1)\nu(S_0) } \leq C\bigp{\frac{s_n^2}{n^2}}^{r/2} + C\Proba(T \geq [n/r]).
\end{align*}
Now, the covariance terms $|\text{Cov}(\indic_{g_i \in S_0}, \indic_{g_j \in S_0})|$ are bounded by $2\beta(|i-j|)$, thus we infer that $s_n^2 \leq 4(n+1)\sum_{i=0}^{n}\beta(i) $. 
But through estimate (3.6) in \parencite[]{CDKM2020b}, we have 
\begin{align}
        \label{control_beta}
    \beta(i) \leq \Proba(T\geq i) \leq \E[T^{q}]i^{-q}.
\end{align}
Since $q \in ]0,1[$, it follows that $s_n^2 = O(n^{2-q})$. 
We finally obtain for any $r \geq 1$
\begin{align*}
    \delta(n) \leq C(r) \bigcro{n^{-\frac{rq}{2}} + \Proba(T \geq [n/r])}.
\end{align*}
\end{proof}
~\\

We now have the tools to estimate the mixing coefficients $\tau_{\varphi}(n)$ following the proof of Proposition 2.1 in \parencite[]{CDKM2023}{}.
The control of $\delta(n)$ established in Lemma \ref{control_delta} allows us to replicate the proof of \parencite[Proposition 2.1]{CDM2023} with more details. Since $\Proba(T \geq n) = O(n^{1-p}s(n))$, $T$ has a strong moment of order  $(p-1)/2$. We can then apply Lemma \ref{control_delta} with $q = (p-1)/2 > 0$. Taking $r$ large enough, $n^{-r\min(q,1)/2}$ can be arbitrarily small so that with \refequ{controle_queue_T} in mind, it is enough to prove that: 
    \begin{align}
            \label{control_tau_deltabeta}
        \tau_\varphi(n) = O(\delta(\lfloor n/2 \rfloor) + \beta(\lfloor n/2 \rfloor)). 
    \end{align}
    Let $X_{j,n}' = \psi\left(g_j, g_{j+1}, \cdots, g_{j+\lfloor n/2 \rfloor}, g_{j+\lfloor n/2 \rfloor +1}', g_{j+\lfloor n/2 \rfloor +2}', \cdots\right)$, according to (6.3) in \cite{CDKM2023}, we have 
    \begin{align}
            \label{control_tau_decomposition}
        \tau_{\varphi}(n) \leq 2\, \delta(\lfloor n/2 \rfloor) + \sup_{l \geq 1}\frac{1}{l}\sup_{1\leq i_1<\cdots<i_l} \tau\bigp{\mathcal G_{i_l + n}, (X_{i_1,n}', \cdots, X_{i_l,n}')}. 
    \end{align}
    Let us give some details. 
    From stationarity, note that for any $j$, 
    \begin{align*}
        \E \abs{X_{j,n}' - X_{j}} \leq \delta(\lfloor n/2 \rfloor).
    \end{align*}
    Consequently, from the definition of $\tau$, 
    \begin{align*}
        \abs{
            \tau(\mathcal G _{i_l+n},(X_{i_1}, \cdots, X_{i_l})) - \tau(\mathcal G _{i_l+n},(X_{i_1,n}', \cdots, X_{i_l,n}')) 
        }
        \leq 
        2l\, \delta(\lfloor n/2 \rfloor)
    \end{align*}
    and (\ref{control_tau_decomposition}) follows. 
    Now, by Berbee's coupling lemma, there exists $(X_{i_1}^*, \cdots, X_{i_l}^*)$ distributed as $(X_{i_1,n}', \cdots, X_{i_l,n}')$, independent of $\mathcal G _{i_l+n}$ and such that 
    \begin{align*}
        2\Proba((X_{i_1,n}', \cdots, X_{i_l,n}') \neq (X_{i_1}^*, \cdots, X_{i_l}^*)) 
        &= \beta(\mathcal G _{i_l+n}, \sigma(X_{i_1,n}', \cdots, X_{i_l,n}'))
        \\&
        \leq \beta(\mathcal G _{i_l+n}, \mathcal F_{i_l + \lfloor n/2 \rfloor}')
        \leq \sup_{k \geq 0} \beta(\calF _k', \mathcal G _{k + \lfloor n/2 \rfloor})
    \end{align*}
    with $\calF_i' \eqdef \calF_i \vee \sigma\bigp{(\eps_i')_{i \geq 1}}$. 
    \\
    Note that by the Kantorovich-Rubinstein duality theorem,
    \begin{align*}
        \tau\bigp{\mathcal G_{i_l + n}, (X_{i_1,n}', \cdots, X_{i_l,n}')} 
        &= 
        W_1 \bigp{P_{(X_{i_1,n}', \cdots, X_{i_l,n}')|\mathcal G_{i_l + n}}, P_{(X_{i_1,n}', \cdots, X_{i_l,n}')}},  
    \end{align*}
    where $W_1$ is the Kantorovitch-Rubinstein distance defined for any $\mu_1, \mu_2$ real probability measures by $W_1(\mu_1, \mu_2) = \inf\{\E \, |Y_1-Y_2| : \law(Y_1) = \mu_1, \law(Y_2) = \mu_2\}$.
    Then
    \begin{align*}
        \tau\bigp{\mathcal G_{i_l + n}, (X_{i_1,n}', \cdots, X_{i_l,n}')} 
        &\leq \E\bigp{\norm{(X_{i_1,n}', \cdots, X_{i_l,n}')-(X_{i_1}^*, \cdots, X_{i_l}^*)}_{\ell^1}}
        \\
        &\leq \E \bigcro{
            \bigp{\sum_{k = 1}^l \abs{X_{i_k,n}' - X_{i_k}^*}} \indic_{(X_{i_1,n}', \cdots, X_{i_l,n}') \neq (X_{i_1}^*, \cdots, X_{i_l}^*)}
        }
        \\
        &\leq 2l \, \norm{\psi}_\infty \Proba((X_{i_1,n}', \cdots, X_{i_l,n}') \neq (X_{i_1}^*, \cdots, X_{i_l}^*)) 
        \\
        &\leq 
        l \, \norm{\psi}_{\infty}\, \sup_{k \geq 0} \beta(\calF _k', \mathcal G _{k + \lfloor n/2 \rfloor}). 
    \end{align*}
    Since $(\eps_i')_i$ is independent of $(g_i)_i$, $\beta(\calF _k', \mathcal G _{k + \lfloor n/2 \rfloor}) = \beta(\calF _k, \mathcal G _{k + \lfloor n/2 \rfloor})$ (see \parencite[Theorem 6.2]{Bra2007}). Taking the supremum over $k$, we infer that 
    \begin{align}
            \label{control_tau_annex}
        \tau\bigp{\mathcal G_{i_l + n}, (X_{i_1,n}', \cdots, X_{i_l,n}')} \leq l \norm{\psi}_\infty  \beta(\lfloor n/2 \rfloor). 
    \end{align}
    Finally, combining (\ref{control_tau_decomposition}) and (\ref{control_tau_annex}), we infer that (\ref{control_tau_deltabeta}) is verified. Now, (\ref{control_tau_deltabeta}) together with (\ref{control_beta}) and Lemma \ref{control_delta} with $r$ large enough imply 
    \begin{align}\label{estim_tau_phi} 
        \tau_{\varphi}(n) = O(n^{-r/2}) + \Proba(T \geq \lfloor n / r \rfloor) + \Proba(T \geq \lfloor n / 2 \rfloor)). 
    \end{align}
    Taking into account \refequ{controle_queue_T} and \refequ{estim_tau_phi} with $r > (p-1)/2$, 
    \begin{align*}
        \tau_{\varphi}(n) = O(n^{1-p}s(n)). 
    \end{align*}
    This completes the proof of Proposition \ref{prp:estim_tau_weakmoment}. 

\section{Hölderian invariance principle for BV observables}
    \label{Section:BV}

Maps in $Hol(\gamma, \rho)$ are a generalization of LSV maps introduced by Liverani, Saussol and Vaienti in \cite{LSV} (for the LSV map, $\rho =2^\gamma$). First studied by Holland in \cite{Hol05}, they were then particularly studied by Gouëzel in his thesis \cite{GouThese} (whose results will be used extensively).
In the following, we will restrict ourselves to the case $\gamma < 1$ in which Gouëzel proved that there exists a unique $f$-invariant probability $\nu$ which is absolutely continuous with respect to the Lebesgue measure $\lambda$, with a positive and Lipschitz continuous density $h$ on any subinterval $[\varepsilon, 1]$ (see \parencite[Theorem 1.4.10]{GouThese}). The return time at $Y=]1/2,1]$ defined by $R(x) = \inf \{n \geq 1 : f^n(x) \in Y \}$ is finite almost everywhere on $Y$ and the induced map $F_Y = f^{R}$ is then a uniformly expanding map of $Y$ and admits an invariant probability  $\nu_0$ equivalent to $\lambda$ on $Y$. It is worth noting that $\nu$ can be constructed by renormalizing the following 
\begin{align*}
    \nu_0(A\cap Y) + \sum_{n \geq 1} \nu_0 (f^{-n}(A) \cap \{ R > n \}) 
\end{align*}
(see for instance \parencite[Lemma 9]{Zwe1998}). 
Let us consider $v_0 : ]0,1[ \to ]0,1/2[$ and $v_1 : ]0,1[ \to ]1/2,1[$ the inverse branches of $f$. Then by the above construction of $\nu$, for any $x \in [0,1/2]$, 
\begin{align}\label{eq_densite_h}
    h(x) = \sum_{n=0}^{\infty} \abs{(v_1 \circ v_0^n)'(x)}h(v_1 \circ v_0^nx).
\end{align}
Let us define inductively a sequence $(z_n)_n$ by $z_0 = 1$, $z_1= 1/2$ and $z_{n+1} = v_0(z_n)$. Then, $f^n$ is a bijection between $J_n \eqdef  ]z_{n+1}, z_n]$ and $Y$. In \cite{GouThese}, Gouëzel showed first in Proposition 1.4.7 that $z_n^{\gamma} \rho(z_n) \sim \frac{1}{\gamma . n}$, which combined with Theorem 1.4.10, as mentioned in Section \ref{Section:intro}, implies that
\begin{align}\label{queue_tps_retour}
    \nu(R > n) = n^{-p}s(n),
\end{align}
where $p = 1/\gamma$ and $s$ is a slowly varying function at $+\infty$, only depending on $\rho$ and $\gamma$ and linked to the de Bruijn conjugate of $\rho$. Its asymptotic behavior only depends of the one of $\rho$. 
Note that as soon as $\rho$ converges to 0 at 0, $s$ converges to 0 at infinity according to \parencite[Section 1.4.2]{GouThese}.
Moreover, we shall use the following facts proved in \cite{LSV}:
\begin{align}
        \label{estim_zn}
    (z_n - z_{n+1}) \sim Cn^{-p-1}s(n)
    \qquad\text{and}\qquad
    h \sim Cn \text{ on } [z_{n+1},z_n]. 
\end{align}
Following what has been done in \cite{DGM2010}, we will estimate some weak mixing coefficients in order to quantify the mixing properties of the iterates of $f$. We will study the action of the transfer operator associated to our dynamical system on the space $\mathcal{BV}$ of functions with bounded variation on $[0,1]$ based on results of \cite{Gou2007}. 
For $\varphi \in L^1([0,1])$  and $x \in [0,1]$, the transfer operator $K$ is defined by 
\begin{align*}
  K\varphi(x) &= \frac{1}{h(x)}\sum_{f(y)=x}\frac{h(y)}{\abs{f'(y)}}\varphi(y) \\
  &= \frac{1}{h(x)}\Big( h(v_0(x))\abs{v_0'(x)}f(v_0(x)) + h(v_1x)\abs{v_1'(x)}\varphi(v_1(x)) \Big). 
\end{align*}
\newline
The operator $K$ acts continuously on $L^1([0,1])$, and we will see that it also acts on $\mathcal{BV}$.
For the reader's convenience, let us recall some results on functions with bounded variation. If $\varphi$ is supported on $[0,1]$ we define its variation on $\R$ by
$$
    V(\varphi) = \underset{x_0 < \ldots < x_N}{\sup} \sum_{k=0}^{N-1}\abs{\varphi(x_{k+1}) - \varphi(x_k)}
$$
where the $x_k$’s are real numbers (not necessarily in $[0, 1]$). Then, by denoting $\norm{\cdot}_\infty$ the supremum norm on $[0,1]$:
\begin{itemize}
    \item $V(\varphi) \geq 2\lVert \varphi \rVert_{\infty}$,
    \item $V(\varphi \psi) \leq \norm{\varphi}_{\infty} V(\psi) + V(\varphi) \norm{\psi}_{\infty}\leq V(\varphi)V(\psi)$,
    \item  $V(\varphi^{(0)}) \leq 3\norm{d\varphi}$, where $\norm{d\varphi}$ is the variation norm of the signed measure $d\varphi$ associated to $\varphi$ on $[0,1]$ and $\varphi^{(0)} = \varphi - \nu(\varphi)$. 
\end{itemize} 
In order to prove the last item above write $\varphi = \psi_1 -\psi_2$ with $\psi_1, \psi_2$ bounded and non-decreasing. Then $V(\varphi^{(0)}) \leq V(\psi_1^{(0)}) + V(\psi_2^{(0)})$ and $\norm{d\varphi} = \norm{d\psi_1} + \norm{d\psi_2}$. Thus it is sufficient to show the inequality for $\psi_1$:  $V(\psi_1^{(0)}) = (\psi_1(1) - \psi_1(0)) + (\psi_1(1)-\nu(\psi_1)) + (\nu(\psi_1) - \psi_1(0)) \leq 3\norm{d\psi_1}$.
\\

In the following, we denote $\mathcal{BV}_1$ the space of functions $\varphi \in \mathcal{BV}$ such that $\norm{d\varphi} \leq 1$, and for $\varphi_1,\ldots,\varphi_j$ bounded measurable functions and $n_1,\ldots,n_j$ nonnegative integers, we set
\begin{align*}
    K^{(0)(n_1,\ldots,n_j)}(\varphi_1,\ldots,\varphi_j) = (K^{n_1}(\varphi_1K^{n_2}(\varphi_2K^{n_3}(\varphi_3\ldots \varphi_{j-1}K^{n_j}(\varphi_j)))))^{(0)},
\end{align*}
where $\varphi^{(0)} = \varphi - \nu(\varphi)$. 
As in \cite{DGM2010} (see their formula (1.15)), we define the $\alpha$-dependent coefficients associated with the Markov chain $(Y_k)_{k \geq 0}$, with stationary measure $\nu$ and transfer operator $K$ as follows
   \begin{align}
       \alpha_{k, \mathbf{Y}}(n) = \underset{1 \leq j \leq k}{\sup} \underset{\substack{n_1 \geq n \\ n_2,\ldots,n_j \geq 0}}{\sup} ~\underset{\varphi_1,\ldots,\varphi_j \in \mathcal{BV}_1}{\sup} \nu \Big( \abs{ K^{(0)(n_1,\ldots,n_j)}(\varphi_1^{(0)},\ldots,\varphi_j^{(0)})} \Big). 
   \end{align}
   Note that these coefficients can be rewritten 
    \begin{align*}
        \alpha_{k,\mathbf{Y}}(n) \eqdef \underset{1\leq l \leq k}{\max} \underset{n \leq i_1 \leq \ldots\leq i_l}{\sup} \alpha(\mathcal{F}_0,(Y_{i_1},\ldots,Y_{i_l}))
    \end{align*}   
    where for any random variables $Z=(Z_1,\ldots,Z_k)$ with values in $\mathbb{R}^k$ and any $\sigma$-algebra $\mathcal{F}$,
    \begin{align*}
        \alpha(\mathcal{F},Z) \eqdef \underset{(x_1,\ldots x_k) \in \mathbb{R}^k}{\sup} \Bigg \lVert \condEp{ \prod_{j=1}^k (\indic_{Z_j \leq x_j})^{(0)}}{\calF}^{(0)} \Bigg \rVert_1
    \end{align*}    
    and $U^{(0)} = U - \E(U)$. 
    
Let us follow \parencite[Section 3]{DGM2010} to estimate $\alpha_{k, \mathbf Y}(n)$ in order to establish the following result:
\begin{thm}\label{thm_alpha}
    Let $f$ be a Holland map with return time $R$ such that $\nu(R > n) \sim n^{-p}s(n)$, with $p > 2$ and $s$ slowly varying at infinity. 
    Then for any $k \geq 1$, there exists $C(k) > 0 $ such that
    \begin{align*}
        \alpha_{k, \mathbf Y}(n) \leq \frac{C(k)s(n)}{n^{p-1}}.
    \end{align*}
\end{thm}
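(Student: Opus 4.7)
The goal is to bound, uniformly over $1\leq j\leq k$, over $n_1\geq n$ and $n_2,\ldots,n_j\geq 0$, and over test functions $\varphi_1,\ldots,\varphi_j\in\mathcal{BV}_1$, the quantity
\begin{align*}
    \nu\bigp{\abs{K^{(0)(n_1,\ldots,n_j)}(\varphi_1^{(0)},\ldots,\varphi_j^{(0)})}}.
\end{align*}
Following the strategy of \parencite{DGM2010}, I would reduce this to a quantitative decay estimate for $K^{n_1}$ on centered $\mathcal{BV}$ functions, and then iterate through the $j$ inner layers.

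The first ingredient I would draw from \cite{GouThese} and \cite{Gou2007} is that $K$ acts continuously on $\mathcal{BV}$, with $\sup_{n\geq 0}\norm{K^n}_{\mathcal{BV}\to\mathcal{BV}}<\infty$, together with the sharp quantitative decay
\begin{align*}
    \nu\bigp{\abs{K^n u}} \leq C\,n^{-(p-1)}\,s(n)\,V(u) \qquad \text{for every } u\in\mathcal{BV} \text{ with } \nu(u)=0.
\end{align*}
This is the operator-renewal reformulation, in the slowly varying setting of Holland maps, of the decay-of-correlations rate already recorded in \refequ{queue_tps_retour} and obtained from the spectral study of the induced uniformly expanding map $F_Y$ on $Y$.

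The second step relies on the identity $\nu\circ K=\nu$, which yields $(K^{n_1}\psi)^{(0)}=K^{n_1}\psi^{(0)}$ for any $\psi\in L^1$, and thus
\begin{align*}
    K^{(0)(n_1,\ldots,n_j)}(\varphi_1^{(0)},\ldots,\varphi_j^{(0)}) = K^{n_1}\!\bigp{(\varphi_1^{(0)}\cdot G_j)^{(0)}},
\end{align*}
where $G_j \eqdef K^{n_2}(\varphi_2^{(0)}\,K^{n_3}(\cdots\,\varphi_{j-1}^{(0)}\,K^{n_j}(\varphi_j^{(0)})\cdots))$ collects the inner nested layers. The decay above then bounds the $L^1(\nu)$-norm by $C\,n_1^{-(p-1)}s(n_1)\,V(\varphi_1^{(0)}\cdot G_j)$, so it remains to control the latter $\mathcal{BV}$-norm by a constant $C(k)$ depending only on $k$. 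I would do this by a descending induction on the nesting level, combining the Banach-algebra inequality $V(\varphi\psi)\leq\norm{\varphi}_\infty V(\psi)+V(\varphi)\norm{\psi}_\infty$, the bound $V(\varphi_i^{(0)})\leq 3\norm{d\varphi_i}\leq 3$ recalled at the start of Section \ref{Section:BV}, and the uniform continuity of $K^{n_i}$ on $\mathcal{BV}$. At the end of the induction this yields $V(\varphi_1^{(0)}\cdot G_j)\leq C(k)$ uniformly in $n_2,\ldots,n_j$ and in the choice of test functions in $\mathcal{BV}_1$.

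It then remains to pass from $n_1$ to $n$. Since the map $n\mapsto n^{-(p-1)}s(n)$ is regularly varying of negative index $1-p$, Potter's bound (Lemma \ref{Potter_thm}) ensures that for any fixed $\delta\in(0,p-1)$ and $n$ large enough, $n_1^{-(p-1)}s(n_1)\leq 2\,n^{-(p-1)}s(n)$ for every $n_1\geq n$, the remaining small values of $n$ being absorbed into $C(k)$. Taking the required suprema then yields $\alpha_{k,\mathbf{Y}}(n)\leq C(k)\,s(n)/n^{p-1}$. The main obstacle I expect is the sharp quantitative decay in the first ingredient: extracting the rate $n^{-(p-1)}s(n)$ (and not merely a qualitative $o(1)$) for $K^n$ on centered $\mathcal{BV}$ functions requires the operator-renewal/spectral apparatus of \cite{GouThese} in the slowly varying regime, and one must check that Gouëzel's correlation estimates recast cleanly as an $L^1(\nu)$-bound for $K^n u$ with a constant depending only on $V(u)$ and uniformly on the slowly varying function $s$.
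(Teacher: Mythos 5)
Your reduction is essentially the one used in the paper: you iterate a uniform $\mathcal{BV}$-bound for the iterates of $K$ through the nested layers (via $V(\varphi\psi)\leq\norm{\varphi}_\infty V(\psi)+V(\varphi)\norm{\psi}_\infty$ and $V(\varphi_i^{(0)})\leq 3\norm{d\varphi_i}$), thereby reducing $\alpha_{k,\mathbf Y}(n)$ to the outermost layer, and you conclude with a sharp $L^1(\nu)$-decay for $K^{n_1}$ applied to a centered $\mathcal{BV}$ function, plus a Potter-type argument to pass from $n_1\geq n$ to $n$ (a point the paper leaves implicit and which you handle correctly). This is the same scheme as the paper's proof, which controls $\norm{d\bigp{\varphi_1^{(0)}K^{n_2}(\cdots)}}$ by iterating Proposition \ref{analogue1.15} and then invokes, by duality, the covariance inequality of Proposition \ref{analogue1.16}, i.e.\ $\nu(\abs{K^n u})\leq C n^{-(p-1)}s(n)V(u)$ when $\nu(u)=0$.

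The genuine gap is your ``first ingredient''. The two estimates you take as input, namely $\sup_n\norm{K^n}_{\mathcal L(\mathcal{BV})}<\infty$ and the sharp bound $\nu(\abs{K^n u})\leq Cn^{-(p-1)}s(n)V(u)$ for centered $u\in\mathcal{BV}$, are not available off the shelf in \cite{GouThese} or \cite{Gou2007} in the form you need: Gouëzel's sharp correlation estimates for Holland maps concern Hölder-type observables, while \cite{DGM2010} proves the $\mathcal{BV}$ statements only for LSV maps, without the slowly varying correction. Establishing them for $\mathcal{BV}$ observables of Holland maps is precisely the content of Propositions \ref{analogue1.15} and \ref{analogue1.16}, and it occupies most of Section \ref{Section:BV}: one needs the decomposition $K^n=\sum_{a+k+b=n}A_a\circ T_k\circ B_b+C_n$, the renewal-operator theorem (Theorem \ref{thm_renewal}) applied to $\Lambda(z)$ acting on $\mathcal{BV}$ --- including the spectral gap and aperiodicity verifications and the bound $\norm{\Lambda_n}_{\mathcal L(\mathcal{BV})}=O(n^{-p-1}s(n))$ --- together with the variation estimates on $\indic_{[z_j,z_i]}h$ and on the operators $A_n$, $B_n$, $C_n$, where the slowly varying factor forces Potter-type corrections at each step (for instance $(v_0^n)'$ is no longer monotone as in the LSV case). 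You correctly flag this as the main obstacle, but since it is the heart of the theorem rather than a checkable citation, the proposal as written leaves the central estimate unproved.
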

With this last estimate, it is sufficient to apply Theorem 1.13 in \cite{DGM2010} to ensure the absolute convergence of the series defined in Theorem \ref{hip_BV}, as well as an invariance principle in the space of continuous functions. Furthermore, Theorem 3.3 in \cite{DM2017} refines the convergence into the space $\mathcal H_{1/2-1/p}^0([0,1])$. This proves our Theorem \ref{hip_BV}.\\

Let us make a study of the iterates $K^n$ of the transfer operator.
Recall that $v_0^n$ is an increasing function. 
The functions $\abs{v_1'}$ and $h$ are bounded above and below on $Y$, hence by (\ref{eq_densite_h}) there exists $C \geq 1$ such that for any $x \in [0,1/2]$
\begin{align}\label{encadrement_h}
    \frac{1}{C} \sum_{n=0}^{\infty} {(v_0^n)'(x)} \leq h(x) \leq C \sum_{n=0}^{\infty} {(v_0^n)'(x)}.
\end{align}
Note that in this section $C$ may denote different constants.
\\
Let us introduce the kernel associated to $K$ that we will abusively denote $K$ as well: 
\begin{center}
    $K(x,v_ix) = \frac{h(v_ix)\abs{v_i'(x)}}{h(x)}$, $i=0,1$, and $K(x,y) = 0$ if $y$ is not an antecedent of $x$ for $f$.
\end{center}
By definition, $K^n$ satisfies for any $x \in [0,1]$:
\begin{align}\label{Kn_def_sum}
    K^n \varphi(x) = \sum_{x_1,\ldots,x_n} K(x,x_1)\cdot \cdot \cdot K(x_{n-1},x_n)\varphi(x_n).
\end{align}
In order to study the operators $K^n$, we cut the previous sum into different "trajectories" $x,x_1,\ldots,x_n$, reproducing the method of \parencite[Section 3]{DGM2010}. Let us define the following operators as $K^n$ is in (\ref{Kn_def_sum}) with:
\begin{itemize}
    \item for $A_n$: $x \in [0,1/2]$, $x_1\ldots,x_{n-1} \in [0,1/2]$ and $x_n \in Y$;
    \item for $B_n$: $x \in Y$ and $x_1,\ldots,x_n \in [0,1/2]$;
    \item for $C_n$: $x \in [0,1/2]$ and $x_1,\ldots,x_{n} \in [0,1/2]$;
    \item for $T_n$: $x \in Y$, $x_1,\ldots,x_{n-1} \in [0,1]$ and $x_n \in Y$.
\end{itemize}
We can now decompose $K^n$ as follows:
\begin{align}\label{itérées_Kn}
    K^n = \sum_{a+k+b =n} A_a \circ T_k \circ B_b + C_n.
\end{align} 
Consequently, the study of $K^n$ will follow from the studies of each of the four operators. We will differentiate the study of $T_n$ from that of $A_n$, $B_n$ and $C_n$ which have the advantage of being written explicitly from $h$, $v_0$ and $v_1$.

\subsection{Study of \texorpdfstring{$T_n$}{}}
    \label{section:decomp_Tn}

In order to obtain a description of the operators $T_n$, we will show that they verify a renewal equation that allows us to apply an operator decomposition result (\parencite[Theorem 1.1]{Gou2004}) following Gouëzel's works in \cite{Gou2004} and \cite{Gou2007}. 

First, we can state a preliminary decomposition analogous to \parencite[Proposition 3.1]{DGM2010}. 
\begin{prp}\label{decomp_Tn}
    The operator $T_n$, acting on $\mathcal{BV}$, can be decomposed in the following way:
    \begin{align*}
        T_n = Q + E_n 
    \end{align*}
    where  $Q\varphi = \Big( \int_{Y} \varphi d\nu \Big) \indic_{Y}$ and $ \lVert E_n \rVert_{\mathcal L(\mathcal{BV})}\leq Cn^{-(p-1)}s(n)$.
\end{prp}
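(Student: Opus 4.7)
The plan is to recognise $T_n$ as the $n$-th term of an operator renewal sequence and invoke the operator renewal theorem of Gouëzel (\parencite[Theorem 1.1]{Gou2004}) in its polynomial form with a slowly varying correction. Let $R_n$ denote the ``first return to $Y$'' operator, obtained from \refequ{Kn_def_sum} by restricting the sum over trajectories to those with $x, x_n \in Y$ and $x_1, \ldots, x_{n-1} \in [0, 1/2]$. Splitting every trajectory contributing to $T_n$ according to its first return time $k$ to $Y$ immediately yields the renewal identity $T_n = \sum_{k=1}^n R_k T_{n-k}$ for $n \geq 1$, with the convention $T_0 = \mathrm{Id}$ on $\mathcal{BV}(Y)$.

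Next, I would check the spectral hypotheses on $R := \sum_{n \geq 1} R_n$. This operator is (the restriction to $Y$ of) the transfer operator of the induced uniformly expanding map $F_Y$. The classical Lasota--Yorke machinery, in the form recalled in \cite{Gou2007}, shows that $R$ is quasi-compact on $\mathcal{BV}$ with a simple leading eigenvalue at $1$; the associated spectral projector is $\Pi \varphi = \bigl(\int_Y \varphi \, d\nu_0\bigr) \indic_Y$, where $\nu_0$ is the $F_Y$-invariant probability of \cite{GouThese}. Aperiodicity on the unit circle follows from the gcd condition $\gcd\{R(y)\} = 1$ built into the inducing scheme.

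The tail estimate for $R_n$ is the third ingredient. Since $\{R = n\}$ coincides with the piece $J_n = \, ]z_{n+1}, z_n]$ on which the corresponding branch of $F_Y$ is a diffeomorphism onto $Y$ with uniformly bounded distortion, a direct computation using the estimates \refequ{estim_zn} together with the Lipschitz regularity of $h$ on $[\varepsilon,1]$ from \cite{GouThese} yields $\|R_n\|_{\mathcal L(\mathcal{BV})} \leq C \nu_0(R = n)$, and consequently $\|\sum_{k > n} R_k\|_{\mathcal L(\mathcal{BV})} = O(n^{-p} s(n))$ via \refequ{queue_tps_retour}. Feeding these inputs into Gouëzel's renewal theorem in the regime $p > 2$ gives
\begin{align*}
T_n \;=\; \frac{1}{\E_{\nu_0}[R]}\, \Pi \;+\; E_n, \qquad \|E_n\|_{\mathcal L(\mathcal{BV})} = O\!\bigl(n^{-(p-1)} s(n)\bigr).
\end{align*}
Identifying the leading term with $Q$ then amounts to recognising that $\nu|_Y = \nu_0 / \E_{\nu_0}[R]$, so that $\E_{\nu_0}[R]^{-1} \Pi \varphi = \bigl(\int_Y \varphi \, d\nu\bigr) \indic_Y = Q \varphi$.

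The main obstacle is proving the uniform bound $\|R_n\|_{\mathcal L(\mathcal{BV})} \leq C \nu_0(R = n)$, since the variation of $R_n \varphi$ has two quite different sources: the variation of $\varphi$ transported by the expanding branch of $F_Y$ restricted to $J_n$, and the boundary jumps at $z_n$ and $z_{n+1}$ introduced by the indicator $\indic_{\{R = n\}}$. The first piece is handled by a standard Lasota--Yorke estimate combined with the bounded distortion of the branch. The second contributes a term of order $(z_n - z_{n+1}) \cdot \sup_{J_n} h$, which by \refequ{estim_zn} is precisely of order $n^{-p} s(n)$, as required. Once this uniform control is in hand, the rest of the argument reduces to a clean application of the functional-analytic renewal theorem of \cite{Gou2004}.
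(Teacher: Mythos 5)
Your strategy is the same as the one used in the text: your $R_n$ are exactly the first--return operators $\Lambda_n\varphi=\indic_{Y}K^n(\indic_{\{R=n\}}\varphi)$, and the proof goes through the renewal equation, the spectral gap and aperiodicity of the induced transfer operator, Theorem \ref{thm_renewal}, and Kac's formula to identify the leading projector with $Q$. The problem lies precisely in the estimate you single out as the main obstacle. First, $\{R=n\}$ is not $J_n$: it is the subinterval $v_1(J_{n-1})$ of $Y$. Second, in $R_n$ the intermediate densities telescope: writing the kernel product along an excursion $x\in Y$, $x_1,\dots,x_{n-1}\in[0,1/2]$, $x_n\in\{R=n\}$ leaves only the ratio $h(x_n)/h(x)$ with both points in $Y$, where $h$ is bounded above and below, so the factor $\sup_{J_n}h\sim Cn$ you introduce never appears. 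Third, and most importantly, the figure you obtain, $(z_n-z_{n+1})\,\sup_{J_n}h\asymp n^{-p}s(n)$, is not ``as required'': it is a factor $n$ larger than your own target $\nu_0(R=n)\asymp n^{-p-1}s(n)$, and it is too weak for the stated conclusion. Indeed, if one only knows $\norm{R_n}_{\mathcal L(\mathcal{BV})}=O(n^{-p}s(n))$, then $\sum_{k>n}\norm{R_k}_{\mathcal L(\mathcal{BV})}=O(n^{1-p}s(n))$, and the second--order term $\frac{1}{\mu^2}\sum_{k>n}\sum_{j>k}P\circ\Lambda_j\circ P$ of Theorem \ref{thm_renewal} --- which you do not mention but which must be absorbed into $E_n$ --- is then only $O(n^{2-p}s(n))$, short of the claimed $O(n^{-(p-1)}s(n))$.

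The correct per--term bound, $\norm{\Lambda_n}_{\mathcal L(\mathcal{BV})}=O(\lambda(J_{n-1}))=O(n^{-p-1}s(n))$, does hold: it follows from the distortion estimate \refequ{encadrement_v_0^n'}, which gives $(v_0^{n-1})'\asymp\lambda(J_{n-1})$ on $Y=J_0$, combined with the submultiplicativity of $V$ and the boundedness of $h$ and $1/h$ on $Y$; this is the computation carried out in the text. With this sharper bound the tail is $O(n^{-p}s(n))$, the double sum is $O(n^{1-p}s(n))$, and the rest of your outline (renewal equation, spectral gap, aperiodicity --- which you assert from the gcd condition but which still requires the argument excluding unimodular eigenvalues of $\Lambda(z)$, $z\neq 1$, given in the text following \parencite[]{Gou2007} --- and the identification of $\E[R]^{-1}\Pi$ with $Q$ via Kac's formula) goes through as you describe.
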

In order to prove Proposition \ref{decomp_Tn}, we aim to apply \parencite[Theorem 1.1]{Gou2004} that can be stated as below in a simplified version, sufficient for our purposes. 
\begin{thm}[Gouëzel, 2004]\label{thm_renewal}
    Let $(T_n)_n$ be bounded operators of a Banach space $\mathbb{B}$ such that for any $z \in \mathbb{D}$, $T(z) = id + \sum_{n \geq 1} z^n T_n$ converges in $\text{Hom}(\mathbb{B},\mathbb{B})$. Suppose that the following conditions are satisfied:
    \begin{itemize}
        \item \emph{Renewal equation}: for any $z \in \mathbb{D}$, $T(z) = (id-\Lambda(z))^{-1}$, where $\Lambda(z) = \sum_{n\geq 1} z^n \Lambda_n$, $\Lambda_n \in \text{Hom}(\mathbb{B},\mathbb{B}) $ and $\sum_{k>n} \lVert \Lambda_k \rVert_{\mathcal{L}(\mathbb{B})} = O(n^{-q})$, where $q > 2$.
        \item \emph{Spectral gap}: $1$ is a simple isolated eigenvalue of $\Lambda(1)$.
        \item \emph{Aperiodicity}: for any $z \in \Bar{\mathbb{D}}\backslash \{1 \}$, $id-\Lambda(z)$ is invertible.
    \end{itemize}
    Let $P$ be the projection of $\Lambda(1)$ associated to the eigenvalue $1$. If $P\Lambda'(1)P\neq 0$, considering $\mu$ such that $P\Lambda'(1)P = \mu P$, one has for any $n$
    \begin{align*}
        T_n = \frac{1}{\mu}P + \frac{1}{\mu^2}\sum_{j = n+1}^\infty(j-(n+1))P\circ \Lambda_j \circ P + N  \text{, where } \lVert N \rVert_{\mathcal L(\mathbb{B})} = O(n^{-q}). 
    \end{align*}
\end{thm}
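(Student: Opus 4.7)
My plan is to apply operator-valued spectral perturbation near $z = 1$ to the generating function $T(z)$ and convert its singular structure into asymptotics for the Taylor coefficients $T_n$, in the spirit of Gouëzel's operator-renewal method. First, the tail hypothesis makes $z \mapsto \Lambda(z)$ continuous on $\overline{\mathbb{D}}$ and of class $C^{\lfloor q \rfloor}$ on $\partial \mathbb{D}$; Kato's analytic perturbation theory then provides, for $z$ in a small disk around $1$, a simple eigenvalue $\lambda(z)$ of $\Lambda(z)$ close to $1$ with a one-dimensional projector $P(z)$, both $C^{\lfloor q \rfloor}$ in $z$ near $1$. Differentiating the eigenvalue equation at $z = 1$ and using $P\Lambda'(1)P = \mu P$ yields $\lambda(z) = 1 + \mu(z - 1) + O((z - 1)^2)$. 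On the splitting $\mathbb{B} = P(z)\mathbb{B} \oplus (id - P(z))\mathbb{B}$, the resolvent decomposes as $T(z) = \frac{P(z)}{1 - \lambda(z)} + H(z)$, with $H$ of class $C^{\lfloor q \rfloor}$ near $1$ since $\Lambda(z)(id - P(z))$ has spectrum strictly inside $\mathbb{D}$.

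Second, I would expand the singular part. The leading pole $\frac{P}{\mu(1 - z)}$ contributes $\frac{P}{\mu}$ to every Taylor coefficient of $T(z)$ via the geometric series, giving the main term of the conclusion. For the sub-leading piece, writing the Taylor remainder $\Lambda(z) - \Lambda(1) - (z - 1)\Lambda'(1) = \sum_{k \geq 1}(z^k - 1 - k(z - 1))\Lambda_k$ and using the identity $(z^k - 1 - k(z-1))/(1-z)^2 = \sum_{m=0}^{k-1}(k-m-1)z^m$ shows that the coefficient of $z^n$ in $(\Lambda(z) - \Lambda(1) - (z-1)\Lambda'(1))/(1-z)^2$ equals $\sum_{j > n+1}(j-n-1)\Lambda_j = \sum_{k > n}\sum_{j > k}\Lambda_j$. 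Feeding this through the Schur-complement / Feshbach expansion of $T(z)$ around the simple pole identifies the $n$-th Taylor coefficient of the sub-leading singular piece as precisely $\frac{1}{\mu^2}\sum_{k > n}\sum_{j > k} P\Lambda_j P$, the middle term of the conclusion.

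Third, the remainder $N$ consists of the Taylor coefficients of $H$ plus the contribution from the rest of the unit circle, after subtracting the leading and sub-leading singular parts at $z=1$. The aperiodicity hypothesis enters here: $id - \Lambda(z)$ is invertible on $\overline{\mathbb{D}} \setminus \{1\}$, so $T(z)$ extends continuously to $\partial \mathbb{D} \setminus \{1\}$, and combined with the $C^{\lfloor q \rfloor}$ regularity of $\Lambda$ on the circle, the singularity-subtracted resolvent is $C^{\lfloor q \rfloor}$ on all of $\partial \mathbb{D}$. Integration by parts $\lfloor q \rfloor$ times in the Fourier inversion formula $T_n = \frac{1}{2\pi}\int_{-\pi}^{\pi} T(e^{i\theta})e^{-in\theta}\,d\theta$ then yields the decay $\|N\|_{\mathcal L(\mathbb{B})} = O(n^{-q})$.

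The step I expect to be the main obstacle is the remainder estimate: one must isolate both the leading and sub-leading singular contributions at $z = 1$ precisely enough that the subtracted resolvent is genuinely $C^{\lfloor q \rfloor}$ at $z = 1$ (not merely smooth on the rest of the circle), so that the integration-by-parts argument yields the sharp rate $n^{-q}$ rather than a weaker bound. The hypothesis $q > 2$ enters precisely here: one derivative of $\Lambda$ is used to expand linearly around $z = 1$, a second to identify the sub-leading singularity via the Abel-summation computation above, and the margin beyond two derivatives ensures enough smoothness of the final remainder on $\partial \mathbb{D}$ to run the Fourier-decay argument and obtain an absolutely convergent bound uniform in $n$.
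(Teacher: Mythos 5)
The paper does not prove this statement at all: it is quoted as a simplified version of Theorem 1.1 of Gouëzel's 2004 paper, so your attempt has to be measured against Gouëzel's original argument. Your overall skeleton (spectral decomposition of $\Lambda(z)$ near $z=1$, extraction of the pole $\frac{P}{\mu(1-z)}$, identification of the second term via the Abel-summation identity $(z^k-1-k(z-1))/(1-z)^2=\sum_{m=0}^{k-2}(k-1-m)z^m$, invertibility away from $1$ by aperiodicity) is the right one, and the algebraic identification of $\frac{1}{\mu^2}\sum_{k>n}\sum_{j>k}P\Lambda_jP$ is correct. The genuine gap is in the analytic step that is the whole point of the theorem. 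The hypothesis $\sum_{k>n}\lVert\Lambda_k\rVert_{\mathcal L(\mathbb B)}=O(n^{-q})$ only gives $\lVert\Lambda_n\rVert_{\mathcal L(\mathbb B)}=O(n^{-q})$, hence $C^j$ regularity of $z\mapsto\Lambda(z)$ on $\bar{\mathbb D}$ only for $j<q-1$: for $2<q\leq 3$ the map is $C^1$ but in general not $C^2$, so the claimed $C^{\lfloor q\rfloor}$ smoothness of $\Lambda$, of $\lambda(z)$, $P(z)$ and of the regular part $H$ is simply not available, and the Kato perturbation step cannot deliver more regularity than $\Lambda$ itself has. Moreover, even where smoothness does hold, integrating by parts an integer number of times can only produce integer rates: at best $O(n^{-\lfloor q\rfloor})$, never the sharp $O(n^{-q})$ for non-integer $q$, and since the available smoothness is capped near $q-1$ this route inherently loses a power and stalls around $n^{-(q-1)}$.

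The missing idea — and the heart of Gouëzel's proof — is to abandon $C^k$ regularity altogether and work in the Banach algebra of operator-valued power series $A(z)=\sum_n A_nz^n$ whose coefficient tails satisfy $\sum_{k>n}\lVert A_k\rVert=O(n^{-q})$. One proves a Wiener-type lemma: this class is stable under products and, when $A(z)$ is invertible for every $z\in\bar{\mathbb D}$, under inversion. The singularity at $z=1$ is then handled by explicitly subtracting the two singular terms (the simple pole and the term you identified through Abel summation), after which the remaining piece lies in that coefficient-tail class and its $n$-th coefficient is $O(n^{-q})$ by definition of the class — no Fourier integration by parts, and no loss of the fractional exponent. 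Your write-up correctly senses that the remainder estimate is the main obstacle, but the tool you propose for it cannot give the stated rate; to repair the proof you would need to replace the smoothness/Fourier-decay argument by this tail-algebra (Wiener lemma) machinery or an equivalent.
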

Let us verify in our context that each hypothesis of Theorem \ref{thm_renewal} is fulfilled for $\mathbb{B} = \mathcal{BV}$ and $T = f$. 
Observe that $T_n\varphi = \indic_{Y} K^n(\indic_{Y}\varphi)$ and consider the operators $\Lambda_n\varphi = \indic_{Y}K^n(\indic_{\{R = n\}}\varphi)$. Note that we can also define it as in (\ref{Kn_def_sum}) considering only trajectories $x \in Y, x_1,\ldots,x_{n-1} \in [0,1/2]$ and $x_n \in Y$, so that we can see them as excursions of length $n-1$ outside $Y$. The operator $\Lambda_n$ is thus given for any $x \in [0,1]$ by: ~ 
\begin{align*}
    \Lambda_n \varphi (x) =\indic_{Y}(x) \frac{h(v_1 \circ v_0^{n-1}(x)) \abs{(v_1 \circ v_0^{n-1})'(x)}}{h(x)} \varphi(v_1 \circ v_0^{n-1}x).
\end{align*}
~\\[11pt]

First, let us check that the renewal equation is verified. The operator $T_n$ is linked to the operators $\Lambda_k$ by the following renewal equation:
\begin{align}\label{eq_renouv}
    T_n = \sum_{j = 1}^{n} \sum_{k_1 +\cdot \cdot \cdot +k_{j}=n}\Lambda_{k_1} \circ \cdots \circ \Lambda_{k_{j}}.
\end{align}
The density $h$ is positive and Lipschitz continuous on $[1/2,1]$ according to \parencite[Theorem 1.4.10]{GouThese}, therefore $h \circ v_1 \circ v_0^{n-1}$ and $1/h$ have bounded variation on $ Y = ]1/2,1]$. Moreover $v_1 \circ v_0^{n-1}$ is continuous and injective from $Y$ to itself, then $V(h \circ v_1 \circ v_0^{n-1})$ can be bounded independently with $n$. By submultiplicativity, we get for any $\varphi \in \mathcal{BV}$: 
\begin{align*}
    V(\Lambda_n \varphi) \leq C V\hspace{-3pt}\bigp{(v_1 \circ v_0^{n-1})'\indic_{\{R = n \}}} V(\varphi). 
\end{align*}
Therefore, it is sufficient to control the variation of $(v_1 \circ v_0^{n-1})'$. First, note that the distortion hypothesis over $f$ can be expressed from the inverse branches. By \parencite[Lemma 6.5]{Young} there exists $C > 0$ such that for any $k$, any $x,y \in J_k$ and any $n$, 
\begin{align}\label{distortion_v}
    \abs{1-\frac{(v_0^n)'(x)}{(v_0^n)'(y)}} \leq C.
\end{align}
Since $v_0^n$ is increasing, $(v_0^n)' > 0$ and there exists $C$ such that
 $(v_0^n)'(x) \leq C (v_0^n)'(y)$. By integrating with respect to $y \in J_k$,
\begin{align*}
    {(v_0^n)'(x)} \lambda(J_k) 
    &\leq C \int_{J_k} (v_0^n)'(y) dy  \\ 
    &= C (v_0^n(z_k) - v_0^n(z_{k+1}))  \\ 
    &= C (z_{n+k} - z_{n+k+1})  \\ 
    &= C \lambda(J_{n+k}). 
\end{align*}
Similarly we show that $\lambda(J_{n+k}) \leq C\lambda(J_k) (v_0^n)'(x)$. Therefore, for any $x \in J_k$,
\begin{align}\label{encadrement_v_0^n'}
    \frac{1}{C}\frac{\lambda(J_{n+k})}{\lambda(J_k)} \leq {(v_0^n)'(x)} \leq  C\frac{\lambda(J_{n+k})}{\lambda(J_k)}.
\end{align}
In particular, using the submultiplicativity of $V$, one has $V\bigp{(v_1 \circ v_0^{n-1})'\indic_{\{R = n \}}} \leq C\lambda(J_n)$ with $\lambda(J_n) = O(n^{-p-1}s(n))$ according to \parencite[Theorem 1.4.10 and Proposition 1.4.12]{GouThese}. Thus $\lVert \Lambda_n \rVert _{\mathcal L(\mathcal{BV})} = O(n^{-p-1}s(n))$. In particular, as $s(n) \to 0$, $\lVert \Lambda_n \rVert _{\mathcal L(\mathcal{BV})} = O(n^{-(p+1)})$. Moreover using slowly varying functions properties (see Lemma \ref{double_somme}), 
\begin{align*}
    \lVert T_n \rVert_{\mathcal L(\mathcal{BV})} &\leq \sum_{j=1}^n\sum_{k_1+\cdot \cdot \cdot+k_j=n}  \lVert \Lambda_{k_1}\rVert_{\mathcal L(\mathcal{BV})}\cdots \lVert \Lambda_{k_j}\rVert_{\mathcal{L}(\mathcal{BV})} \\
    &\leq  C\sum_{k=1}^n \frac{s(k)}{k^{p+1}} + C\sum_{j=2}^n j\frac{s(n)}{n^{p+1}}.
\end{align*}
We get $ \lVert T_n \rVert_{\mathcal L (\mathcal{BV})} = O(1)$. Now, let us set $\Lambda(z) = \sum_{n\geq 1}z^n\Lambda_n$ and $T(z) = \sum_{n \geq 1}z^nT_n$ that are well-defined operators on $\mathcal{BV}$, for $\abs{z} \leq 1 $ and $\abs{z} < 1$ respectively. They satisfy the first condition (\ref{eq_renouv}) of Theorem \ref{thm_renewal}:
\begin{align*}
    T(z) = (id-\Lambda(z))^{-1}.
\end{align*}

Let us now deal with the spectral gap as done in \cite{Gou2007} by applying Theorem \ref{Ruelle_thm} proved by Ruelle in \cite{Ruelle} to bound the essential spectral radius of $\Lambda(z)$. For the sake of clarity let us make the proof. For $z \in \mathbb D$, we define an operator on bounded functions by 
\begin{align*}
    \hat{\Lambda}(z)\varphi(x) = \sum_{n=1}^{\infty} z^n \Lambda_n \varphi(f^n(x)) \indic_{R(x)=n, x \in Y}.
\end{align*}
Consider $r = \underset{m \to \infty}{\lim} \norm{\hat{\Lambda}(z)^m}_{\mathcal L(\mathcal{BV})}^{1/m}$. Then, the essential spectral radius of $\Lambda(z)$ on $\mathcal{BV}$ is bounded by $r$ (see Theorem \ref{Ruelle_thm}). 
\\
Now, $\Hat{\Lambda}(z)$ can be written as $\Hat{\Lambda}(z)\varphi(x) = z^{R(x)}\frac{h(x)}{h(F_Y(x))\abs{F_Y'(x)}}\varphi(F_Y(x))\indic_{x \in Y}$ (where $R$ is the return time), so that inductively we get for any $k$
\begin{align*}
    \Hat{\Lambda}(z)^k\varphi(x) =  \frac{h(x)}{h(F_Y^k (x))}\bigp{\prod_{i=0}^{k-1}  z^{ R(F_Y^{i}(x))}\frac 1 {\abs{F_Y'(F_Y^{i}(x))}}}\varphi(F_Y^{k}(x))\indic_{x \in Y}.
\end{align*}

\comments{Following what has been done in \cite{Gou2007}, we will apply Theorem \ref{Ruelle_thm} proved by Ruelle in \cite{Ruelle} to bound the essential spectral radius of $\Lambda(z)$. For $z \in \mathbb D$, we define an operator on bounded functions by $\Hat{\Lambda}(z)\varphi(x) = \sum_{n=1}^{\infty} z^n \Lambda_n \varphi(f^n(x))$. Consider $r = \underset{m \to \infty}{\lim} \norm{\Hat{\Lambda}(z)^m}_{\mathcal L(\mathcal{BV})}^{1/m}$.\\
Then the essential spectral radius of $\Lambda(z)$ on $\mathcal{BV}$ is bounded by $r$ (see Theorem \ref{Ruelle_thm}). 
Since $\indic_{Y}(f^n(x)) = \indic_{\{R=n\}}(x)$\aurelie{cette égalité est fausse, donc autre argument à donner pour justifier l'égalité qui suit.. + si on avait $\Lambda_n (\varphi  \circ f^n)(x)$ j'intuite un peu le truc mais on a $\Lambda_n \varphi(f^n(x))$}, we have $\Hat{\Lambda}(z)\varphi(x) = z^{R(x)}\frac{h(x)}{h(F_Y(x))}\abs{F_Y'(x)}^{-1}\varphi(F_Y(x))$. We obtain, inductively,
\begin{align*}
    \Hat{\Lambda}(z)^m\varphi(x) =  \frac{h(x)}{h(F_Y^m (x))}\Bigg (\prod_{i=1}^m  z^{ R(F_Y^{i-1}(x))}\abs{F_Y'(F_Y^{i-1}(x))}^{-1}\Bigg )\varphi(F_Y^{m}(x)).
\end{align*}}
Since $h$ is continuous and positive on $Y$, $h/(h\circ F_Y^k)$ is bounded on this subset and we get $\lVert \Hat{\Lambda}(z)^m \varphi \lVert_{\infty} \leq C \norm{z^{R}/F_Y'}_\infty^m \norm{\varphi}_{\infty}$.
Consequently, for any $z \in \Bar{\mathbb{D}}$, the essential spectral radius of $\Lambda(z)$ on $\mathcal{BV}$ is bounded by $\norm{z^{R}/F_Y'}_\infty < 1$ since $F_Y$ is uniformly expanding. Moreover, $1$ is an isolated eigenvalue with finite multiplicity of $\Lambda(1)$, and since $ \lVert \Lambda(1)^n \rVert_{\mathcal L(\mathcal{BV})}$ is bounded, the operator $id - \Lambda(1)$ does not have any nilpotent part. 
\\

We will provide a detailed proof of aperiodicity following the top of page 1495 in \cite{Gou2007}.

{In order to show that $id-\Lambda(z)$ is invertible (for $z \neq 1$), it is sufficient to show that $1$ is not an eigenvalue of $\Lambda(z)$ for $\abs{z}=1, z \neq 1$. \newline 
\begin{lem}
   Let $\abs{z}=1$ and assume that there exists $\varphi \in \mathcal{BV}$, non-identically zero  and with support in $Y$ such that $\Lambda(z)\varphi = \varphi$. 
   Then $z = 1$ and $\varphi$ is constant everywhere on $Y$.
\end{lem}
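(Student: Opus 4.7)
The plan is to reduce the problem to a modulus-one cocycle equation, then exploit the Holland-map dynamics to force $z = 1$, and finally invoke ergodicity to conclude that $\varphi$ is constant on $Y$. I would proceed in four steps.

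First I would apply the triangle inequality to the identity $\Lambda(z)\varphi = \varphi$, yielding $|\varphi|(x) \leq \Lambda(1)|\varphi|(x)$ for $x \in Y$. Since $\Lambda(1)$ is the transfer operator of the induced map $F_Y$ with respect to the $F_Y$-invariant probability $\nu_Y$ on $Y$, it preserves $\int \cdot \, d\nu_Y$; integrating the pointwise inequality then forces equality $\nu_Y$-almost everywhere. As $1$ is a simple isolated eigenvalue of $\Lambda(1)$ on $\mathcal{BV}$ (shown just above the lemma) with eigenfunction $\indic_Y$, the function $|\varphi|$ must be $\nu_Y$-a.e.\ constant, and after renormalization I may assume $|\varphi| = 1$ on $Y$.

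Next, the equality case of the triangle inequality in $\Lambda(z)\varphi(x) = \varphi(x)$ forces every nonzero summand on the right-hand side to share a common argument, which translates into the pointwise cocycle identity
\[
z^n\,\varphi\bigl(v_1 \circ v_0^{n-1}(x)\bigr) = \varphi(x)
\qquad \text{for every } n \geq 1 \text{ and a.e. } x \in Y.
\]
As this is a countable family of a.e. relations, they hold simultaneously on a common full-measure subset of $Y$. Fixing such an $x$, I would then exploit the Holland-map dynamics: set $y_n := v_1 \circ v_0^{n-1}(x)$. Since $0$ is an attracting fixed point of $v_0$ on $[0,1/2]$, one has $v_0^{n-1}(x) \to 0^+$, and since $v_1 : t \mapsto (t+1)/2$ is continuous with $v_1(0) = 1/2$, we get $y_n \to 1/2^+$. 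Because $\varphi$ has bounded variation, the right limit $\ell := \varphi(1/2^+)$ exists, and the cocycle identity $\varphi(y_n) = z^{-n}\varphi(x)$ then gives $z^{-n} \to \ell/\varphi(x)$. For $|z|=1$ the convergence of $(z^{-n})$ forces $z = 1$: if $z^{-n} \to L$ then also $z^{-(n+1)} = z^{-1} z^{-n} \to z^{-1} L$, so $L = z^{-1} L$, and $|L| = 1$ yields $z = 1$.

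With $z = 1$ the cocycle relation becomes $\varphi \circ F_Y = \varphi$ a.e.\ on $Y$, and the ergodicity of $F_Y$ with respect to $\nu_Y$ forces $\varphi$ to be a.e.\ constant on $Y$, which finishes the proof. I expect the main obstacle to be the third step: it requires combining the BV regularity of $\varphi$ (to extract a genuine pointwise limit along the orbit $(y_n)$) with the dynamical feature that $v_1 \circ v_0^{n-1}(x)$ accumulates at the single boundary point $1/2^+$, which is precisely what rules out the a priori possibility that $z$ is a nontrivial root of unity.
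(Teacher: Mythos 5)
Your route to the cocycle identity $e^{-itR}\varphi\circ F_Y=\varphi$ and to $z=1$ is a genuinely different and attractive one (equality case in the triangle inequality using $\Lambda(1)\indic_Y=\indic_Y$, then the preimages $v_1\circ v_0^{n-1}(x)$ accumulating at $1/2^+$ together with the existence of one-sided limits of BV functions, instead of the paper's $L^2$ isometry argument and the use of $\{R=1\}\neq\emptyset$). But as written your first step is circular: you justify the a.e.\ constancy of $\abs{\varphi}$ by the simplicity of the eigenvalue $1$ of $\Lambda(1)$, claimed to be ``shown just above the lemma''. Before the lemma the paper only establishes the bound on the essential spectral radius, that $1$ is an isolated eigenvalue of finite multiplicity, and the absence of a nilpotent part; simplicity (the eigenspace being $\mathbb{C}\indic_Y$) is deduced \emph{after} the lemma, from the lemma itself. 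You would need an independent argument, e.g.\ that a nonnegative a.e.\ fixed point of $\Lambda(1)$ is the density of an absolutely continuous $F_Y$-invariant probability, unique by the discussion in Section 2, hence a.e.\ constant; and even then this, like your normalization $\abs{\varphi}=1$, tacitly assumes $\varphi$ is not a.e.\ zero, which ``non-identically zero'' does not guarantee since $\mathcal{BV}$ consists of genuine functions.

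The more serious gap is that you only prove a.e.\ statements, while the lemma asserts that $\varphi$ is constant \emph{everywhere} on $Y$; you stop at ``$\varphi$ is a.e.\ constant'' via ergodicity of $(F_Y,\nu_Y)$ (itself not proved in the paper, though extractable from uniqueness of the a.c.\ invariant measure). In the BV setting this difference is essential: the lemma is used to show that $id-\Lambda(z)$ is invertible for $z\neq 1$ and that the eigenspace of $\Lambda(1)$ at $1$ is exactly $\mathbb{C}\indic_Y$, so one must rule out genuine BV eigenfunctions differing from $c\indic_Y$ on a Lebesgue-null set — in particular eigenfunctions vanishing a.e.\ but not identically, a case in which your normalization, your limit argument for $z=1$, and the ergodicity step all break down. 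The entire second half of the paper's proof is devoted to this upgrade: setting $\psi=\varphi-c\indic_Y$, using $\Lambda(1)\indic_Y=\indic_Y$, the density of $\{\psi=0\}$, and a variation blow-up along successive preimages to force $\psi=0$ at every point of $Y$. Your proposal omits this step entirely, so it does not yet prove the statement as formulated.
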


\begin{proof}
Write $z = e^{it}$, where $t \in [0, 2\pi[$. Following \parencite[Lemma 6.7]{Gou2004}, we first show that $e^{-itR}\varphi \circ F_Y = \varphi$ a.e. on $Y$.
Let us denote $\langle \cdot , \cdot \rangle $ the scalar product in $L^2(\nu_Y)$ and define for any bounded function $u$, $Wu = e^{-itR}u\circ F_Y$. Note that $\Lambda(1)$ is the transfer operator associated with the transformation $F_Y$. It follows that for any bounded functions $u,v$,
\begin{align*}
    \langle u,\Lambda(z)v \rangle 
    &= \int_Y \overline{u}\Lambda(z)(v) d\nu_Y \\
    &= \int_Y \overline{u}\Lambda(1)(e^{itR}v) d\nu_Y \\ 
    &= \int_Y \overline{u}\circ F_Y(e^{itR}v) d\nu_Y \\
    &= \int_Y \overline{e^{-itR}u} \, v d\nu_Y \\
    &= \langle Wu,v \rangle.
\end{align*}
We get $\langle W\varphi,\varphi\rangle = \langle \varphi, \Lambda(z)\varphi \rangle =  \langle \varphi,  \varphi
\rangle$. Therefore
\begin{align*}
    \lVert W\varphi -\varphi\rVert_2^2 
    = \lVert W\varphi \rVert_2^2 + \lVert \varphi\rVert_2^2 - 2 \text{Re}\langle W\varphi,\varphi\rangle
    =  \lVert W\varphi \rVert_2^2 -  \lVert \varphi\rVert_2^2.
\end{align*}
 Since $\nu_Y$ is $F_Y$-invariant, we have $\lVert W\varphi \rVert_2^2 =  \lVert \varphi\rVert_2^2$ and $W\varphi = \varphi$ that is
\begin{align}\label{eq_periodicite}
    e^{-itR}\varphi \circ F_Y = \varphi
\end{align}
almost everywhere on $Y$.
Let $Y_0 $ be a full-measure subset of $Y$ on which $\varphi$ satisfies (\ref{eq_periodicite}) everywhere. The function $F_Y$ induces a bi-measurable bijection between each interval $\{R = n \}$ and $Y$. Set $Y_1 = Y_0\cap_{n \geq 1}F_Y(Y_0\cap \{R = n \})$ that satisfies then $\nu_Y(Y_1) = 1$.
Let us show that $\varphi$ is constant on $Y_1$. Consider $x\neq y \in Y_1$. For any $n \geq 1$, there exist $x_n, y_n \in \{ R = n \} \cap Y_0 $ such that $ F_Y(x_n) = x$ and $F_Y(y_n) = y$. By construction, the sequences $(x_n)_n,(y_n)_n$ are decreasing, then for any $n \geq 1$:
\begin{align*}
    V(\varphi) 
    \geq \sum_{k=1}^n \abs{\varphi(x_k)-\varphi(y_k)} 
    = \sum_{k=1}^{n} \abs{e^{-itk}\varphi(x) - e^{-itk}\varphi(y)}
    = n\abs{\varphi(x)-\varphi(y)}.
 \end{align*}
 Since $V(\varphi) < \infty$, we infer that $\varphi$ is constant a.e. on $Y$ equal to some $c$. On the other hand, $\{ R = 1 \}$ is non-empty then (\ref{eq_periodicite}) implies $e^{-it} = 1$, so that $z =1$. \\ 
Consider $\psi = \varphi-c\indic_Y$
that verifies $\Lambda(1)\psi = \psi$ according to what precedes and let us show that $\psi = 0$ everywhere on $Y$. 
Assume that $\psi(x) \neq 0$ for some $x \in Y$, one has
\begin{align*}
    \psi(x) = \Lambda(1)\psi(x) = \frac{1}{h(x)}\sum_{f(y) = x} \frac{h(y)}{\abs{f'(y)}}\psi(y).
\end{align*}
Moreover, since $\Lambda(1)$ is the transfer operator associated with the induced transformation on $Y$, $\Lambda(1)\indic_{Y} = \indic_{Y}$. Hence 
\begin{align*}
1 = \frac{1}{h(x)}\sum_{f(y) = x}\frac{h(y)}{\abs{f'(y)}}  = \frac{1}{h(x)}\sum_{f(y) = x}\frac{h(y)}{\abs{f'(y)}}\frac{\psi(y)}{\psi(x)}.
\end{align*}
Since $\{ \psi = 0 \}$ is dense in $Y$, if $ \psi(y) =\psi(x)$ for any antecedent $y$ of $x$ by $f$ then we would have $V(\psi) \geq n\psi(x)$ for any arbitrary $n$. $V(\varphi)$ would then be infinite, which is impossible. 
Therefore there exists some $x_1 \in Y$ such that $f(x_1) = x$ and $\abs{\psi(x_1)} > \abs{\psi(x)}$. We then repeat the reasoning with $x_1$ instead of $x$. If the procedure does not stop we have an infinity of $x_n$ such that $\abs{\psi(x_n)} > \abs{\psi(x)}$, and thus $V(\psi) = \infty$, which is absurd. It must end, but then we also obtain as above that $V(\psi)$ is infinite. Finally $\varphi = c\indic_{Y}$ what achieves the proof of the lemma.
\end{proof}
According to the previous lemma, for $z \neq 1$, $1$ is not an eigenvalue of $\Lambda(z)$ and the essential spectral radius of $\Lambda(z)$ is lower than $1$. Thus $id-\Lambda(z)$ is invertible as expected. 
Furthermore, the characteristic space of $\Lambda(1)$ associated to $1$ is $\mathbb{C}\indic_Y$ and, since $id - \Lambda(1)$ does not have any nilpotent part, $1$ is a simple eigenvalue.
Now, we follow the computations given in \cite{Gou2007} page 1495 but with the reference measure $\nu_Y$ instead of the Lebesgue measure. Hence, the projection $P$ of $\Lambda(1)$ associated to $1$ is given by
 \begin{align*}
     P\varphi = \Big (\int_Y \varphi  d\nu_Y \Big)\indic_Y
 \end{align*}
and we can show that $P\Lambda'(1)P =  \frac{1}{\nu(Y)}P$. Indeed, $P\Lambda'(1)P\varphi = \Big(\int_Y \Lambda'(1) \indic_Y d\nu_Y\Big)P\varphi$ and:
 \begin{align*}
     \int_Y \Lambda'(1)\indic_{Y} d\nu_Y 
     &=\frac{1}{\nu(Y)}\int_Y \sum_{n=1}^{\infty} n \Lambda_n(\indic_Y) d\nu \\ 
     &=  \frac{1}{\nu(Y)}\sum_{n=1}^{\infty}n \int_Y\indic_Y K^n(\indic_{\{ R = n\}}) d\nu \\ 
     &= \frac{1}{\nu(Y)}\sum_{n=1}^{\infty}n \nu(\{R=n \})
     \\ &= \frac{1}{\nu(Y)}
 \end{align*}
 where the last equality follows from Kac's formula (see for instance Proposition 1.3.4 in \parencite[]{GouThese}).
}
\\

According to what precedes, all the hypotheses of Gouëzel's Theorem \ref{thm_renewal} are verified with $\mathbb B = \mathcal{BV}$, the operators $T_n$, $\mu = 1/\nu(Y)$ and $q = p$. We get for any $n \geq 1$:
 \begin{align*}
     T_n = \nu(Y)Q + \Tilde{E_n} + N
 \end{align*}
 where $\Tilde{E_n} = {\nu(Y)^2}\sum_{j = n+1}^\infty (j-(n+1)) Q\Lambda_jQ $ and $\norm{N}_{\mathcal L(\mathcal{BV})} = O(n^{-p})$. 
 Moreover, one has $\lVert \Lambda_j \rVert_{\mathcal L(\mathcal{BV})} = O(j^{-p-1}s(j))$, hence
 \begin{align*}
     \lVert \Tilde{E_n} \rVert_{\mathcal L(\mathcal{BV})} 
     \leq {\nu(Y)^2}\sum_{j = n+1}^\infty (j-(n+1)) \lVert Q\rVert_{\mathcal L(\mathcal{BV})}^2 \lVert \Lambda_j \rVert_{\mathcal L(\mathcal{BV})} 
     \leq C\sum_{j = n+1}^\infty (j-(n+1)) j^{-p-1}s(j).
 \end{align*}
Now $\sum_{j = n+1}^\infty j^{-p}s(j) = O(n^{-p+1}s(n))$, which concludes the proof of Proposition \ref{decomp_Tn}. 

\subsection{Study of the operators \texorpdfstring{$A_n, B_n, C_n$}{}}
In order to study $A_n, B_n$ and $C_n$, let us adapt what has been done in \parencite[Section 3]{DGM2010} to our context. It is worth noting that in our case, $(v_0^n)'$ is not necessarily decreasing so that we will have to adapt their proof. First, let us show an analogue of \parencite[Lemma 3.4]{DGM2010} following their arguments.
\begin{lem}
    There exists a constant $C>0$ such that for any $1 \leq i < j$
    \begin{align*}
        V(\indic_{[z_j,z_i]}h) \leq Cj \text{ and }  V(\indic_{[z_j,z_i]}/h) \leq Cj/i^2.
    \end{align*}
\end{lem}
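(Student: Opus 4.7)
The plan is to exploit the partition $[z_j, z_i] = \bigsqcup_{k=i}^{j-1} J_k$, where $J_k = \,]z_{k+1}, z_k]$. For any function $u$ with finite one-sided limits at $z_j$ and $z_i$, extending $\indic_{[z_j,z_i]}u$ by zero creates at most two boundary jumps, so that
$$
V(\indic_{[z_j,z_i]}u) \leq |u(z_j)| + |u(z_i)| + \sum_{k=i}^{j-1} V_{J_k}(u).
$$
Applying this with $u = h$ and $u = 1/h$ together with $h \sim Cn$ on $J_n$ from (\ref{estim_zn}), the boundary contributions are bounded by $Cj$ and $C/i$ respectively, so the task reduces to estimating the interior variations $V_{J_k}(h)$ and $V_{J_k}(1/h)$.

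The heart of the proof will be the uniform bound $V_{J_k}(h) \leq C$ for every $k \geq 1$. For this, I would start from the invariance relation $h(x) = v_0'(x)\,h(v_0 x) + \tfrac{1}{2}h(v_1 x)$ on $[0,1/2]$: the second summand contributes $O(\lambda(J_k))$ to $V_{J_k}$ since $h$ is Lipschitz on $Y$ by \parencite[Theorem 1.4.10]{GouThese} and $v_1$ is affine, and is therefore negligible. Iterating yields the series $h(x) = \tfrac{1}{2}\sum_{n \geq 0} (v_0^n)'(x)\,h(v_1 v_0^n x)$ of (\ref{eq_densite_h}), whose summands I would control on $J_k$ using the distortion bound (\ref{distortion_v}), the two-sided estimate (\ref{encadrement_v_0^n'}), and the Lipschitz regularity of $h$ on $Y$, in the same spirit as \parencite[Lemma 3.4]{DGM2010}.

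Once the bound $V_{J_k}(h) \leq C$ is in hand, the first estimate follows at once from $\sum_{k=i}^{j-1} V_{J_k}(h) \leq C(j-i) \leq Cj$ combined with the boundary contribution. For the second, the elementary inequality $V_{J_k}(1/h) \leq V_{J_k}(h)/(\inf_{J_k} h)^2$ together with $\inf_{J_k} h \geq ck$ from (\ref{estim_zn}) yields $V_{J_k}(1/h) \leq C/k^2$, hence $\sum_{k=i}^{j-1} V_{J_k}(1/h) \leq C/i$. Combining with the boundary contribution $\leq C/i$ gives $V(\indic_{[z_j,z_i]}/h) \leq C/i$, which is $\leq Cj/i^2$ since $i \leq j$.

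The main obstacle is the uniform interior bound $V_{J_k}(h) \leq C$. A direct triangle-inequality expansion of the series for $h$ contributes $\sum_{n \geq 0} \lambda(J_{n+k})/\lambda(J_k) \sim k$, matching the size of $h$ itself on $J_k$. To absorb this spurious factor of $k$, one must either extract quantitative cancellation from the near-constancy of $(v_0^n)'$ on $J_k$ at fixed $n$ (a refinement of (\ref{distortion_v})) or exploit the near-monotonicity of $h$ on each $J_k$, in analogy with the LSV treatment of \parencite[]{DGM2010}.
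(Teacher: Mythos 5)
Your reduction (split off the two endpoint jumps, which cost $O(j)$ for $h$ and $O(1/i)$ for $1/h$ by \refequ{estim_zn}, then control the interior variation on each $J_k$) is a sound architecture, and your passage from the first estimate to the second via $V_{J_k}(1/h)\leq V_{J_k}(h)/(\inf_{J_k}h)^2$ is the same trick the paper uses at the very end. But the proof is not complete: the entire content of the lemma sits in the interior bound, and you leave it unproved. You state $V_{J_k}(h)\leq C$ as "the heart", sketch an expansion of the series \refequ{eq_densite_h}, and then concede in your last paragraph that the triangle inequality only yields a bound of order $k$ and that one would need either "quantitative cancellation from the near-constancy of $(v_0^n)'$" or "near-monotonicity of $h$" to repair it. The second escape route is exactly what is unavailable here: for Holland maps $(v_0^n)'$ (hence $h$) need not be monotone on the $J_k$'s — this is precisely the point where the LSV argument of \parencite[Lemma 3.4]{DGM2010} must be adapted — so invoking monotonicity "in analogy with the LSV treatment" would be circular.

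The missing estimate is the quantitative distortion input: combining \refequ{distortion_v} with \refequ{encadrement_v_0^n'} (via the Lipschitz control of $\log\zeta_a$ in Young's setting), $(v_0^n)'$ is Lipschitz on each $J_k$ with constant $O\bigp{\lambda(J_{n+k})/\lambda(J_k)}$, so its \emph{interior} variation on $J_k$ is $O(\lambda(J_{n+k}))$, not $O(\lambda(J_{n+k})/\lambda(J_k))$. With this, the interior variation of $h$ over the whole of $[z_j,z_i]$ is in fact $O(1)$, since $\sum_{n\geq 0}\sum_{k\geq 1}\lambda(J_{n+k})\leq \sum_{m} m\,\lambda(J_m)<\infty$ for $p>1$; the factor $j$ in the lemma comes only from the boundary terms $\sum_{n}\sup_{[z_j,z_i]}(v_0^n)'\asymp \sum_n \lambda(J_{n+j})/\lambda(J_j)\asymp j$ (handled in the paper through Potter's bound, Lemma \ref{Potter_thm}, for the slowly varying factors, with the finitely many small indices $k<k_0$ absorbed using $h\in\mathcal{BV}$ on $[\varepsilon,1]$). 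So your plan can be completed, but only once this Lipschitz distortion estimate for $(v_0^n)'$ on each $J_k$ is proved; as written, the claimed uniform bound $V_{J_k}(h)\leq C$ — and hence both inequalities of the lemma — remains unestablished.
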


\begin{proof}
Combining the properties of the bounded variation $V$ with (\ref{encadrement_h}), note that 
\begin{align}
        \label{proof_decomp_Tn:control_V}
    V(\indic_{[z_M,z_N]}h)
    &\leq 
    C\sum_{n = 0}^\infty \bigp{\sup_{[z_M,z_N]}{(v_0^n)'} + 3 \sum_{k = N}^{M-1} \norm{d\bigp{\indic_{J_k}(v_0^n)'}}}
    \nonumber \\
    &\leq
    C\sum_{n = 0}^\infty \bigp{\sup_{k = N \cdots M} \frac{\lambda(J_{n+k})}{\lambda(J_{k})} + 3 \sum_{k = N}^{M-1} \norm{d\bigp{\indic_{J_k}(v_0^n)'}}}
    \nonumber \\
    &\leq
    C\sum_{n = 0}^\infty \bigp{\sup_{k = N \cdots M} \frac{s(n+k) k^{p+1}}{s(k) (n+k)^{p+1}} + 3 \sum_{k = N}^{M -1} \norm{d\bigp{\indic_{J_k}(v_0^n)'}}}.
\end{align}

Taking into account Lemma \ref{Potter_thm}, there exists $k_0 \in \mathbb N$ such that for any $k \geq k_0$ and any $n$, $\frac{s(n+k)}{s(k)}\leq 2\bigp{\frac{n+k}{k}}^{p-1/2}$. Noting that $\left\{\bigp{\frac{k}{n+k}}^{3/2}\right\}_{k \geq 1}$ is increasing and $\sum_{n = 0}^\infty \bigp{\frac{j}{n+j}}^{3/2}$ is of order $j$, we infer that 
\begin{align}
        \label{proof_decomp_Tn:control_sup_ell}
    \sum_{n = 0}^\infty \sup_{k = k_0 \cdots j} \frac{s(n+k) k^{p+1}}{s(k) (n+k)^{p+1}} \leq Cj.
\end{align}
Now, combining (\ref{distortion_v}) and (\ref{encadrement_v_0^n'}), we infer that $(v_0^n)'$ is Lipschitz on each interval $J_k$ with a Lipschitz constant $l_k \leq C\frac{\lambda(J_{n+k})}{\lambda(J_k)}$, so that $\norm{d\bigp{\indic_{J_k}(v_0^n)'}} \leq C \lambda(J_{n+k})$. 
Hence, taking into account (\ref{proof_decomp_Tn:control_V}), (\ref{proof_decomp_Tn:control_sup_ell}) and  (\ref{estim_zn}), 
\begin{align*}
     V(\indic_{[z_j,z_i]}h) 
     \leq 
     V(\indic_{[z_j,z_{k_0}]}h) + V(\indic_{[z_{k_0}, z_i]}h) 
     \leq 
     Cj + C
     \leq 
     Cj. 
\end{align*}
(The case when $k_0 \notin \{i, i+1 \cdots, j\}$ follows since $h \in \mathcal{BV}$ so that $V(\indic_{[\varepsilon, 1]}h)$ is bounded for any $\varepsilon > 0$.)
Finally, notice that $\abs{\frac{1}{h(x)}-\frac{1}{h(y)}} = \abs{\frac{h(y)-h(x)}{h(x)h(y)}}$ so that 
\begin{align*}
    V(\indic_{[z_j,z_i]}/h) \leq \frac{V(\indic_{[z_j,z_i]}h)}{(\inf_{[z_j,z_i]}h)^2} \leq Cj/i^2.
\end{align*}
\end{proof}

In a similar way, we can adapt the proofs of Lemma 3.5 and Lemma 3.6 in \parencite[]{DGM2010} so that following their arguments we can state that for any $\varphi \in \mathcal{BV}$: 
\begin{align}
    &V(C_n\varphi)\leq CV(\varphi) \label{bound_Cn}\\ 
    &V(A_n\varphi) \leq CV(\varphi)/(n+1) \label{bound_An}\\ 
    &V(B_n\varphi) \leq CV(\varphi)s(n)/(n+1)^p. \label{bound_Bn}
\end{align}

We can now state an analogous result to \parencite[Proposition 1.15]{DGM2010} to establish the uniform boundedness of the iterates of $K$.
\begin{prp}\label{analogue1.15}
    There exists $C > 0$ such that for any $n \geq 1$ and any $\varphi \in \mathcal{BV}$,
   $\lVert dK^n\varphi\rVert \leq C\norm{d\varphi}$. 
\end{prp}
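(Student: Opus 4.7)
The plan is to combine the decomposition \refequ{itérées_Kn} of $K^n$ with the further splitting $T_k = Q + E_k$ provided by Proposition \ref{decomp_Tn}, giving three contributions: $C_n\varphi$, the \emph{$Q$-sum} $\sum_{a+k+b=n,\, k\geq 1} A_a Q B_b\varphi$, and the \emph{$E_k$-sum} $\sum_{a+k+b=n,\, k\geq 1} A_a E_k B_b\varphi$. Since $K\indic = \indic$ ($\nu$ being $f$-invariant), one has $dK^n\varphi = dK^n(\varphi - \nu(\varphi)\indic)$ and $\norm{d\varphi} = \norm{d(\varphi-\nu(\varphi)\indic)}$, so we would first reduce to the case $\nu(\varphi) = 0$; the bound $\norm{\varphi}_\infty \leq V(\varphi^{(0)}) \leq 3\norm{d\varphi}$ is then available and will be used to control the scalar coefficients below.

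The $C_n$-piece is handled directly by \refequ{bound_Cn}. For the $E_k$-piece, plugging in $\norm{A_a}_{\mathcal L(\mathcal{BV})} = O(1/(a+1))$ and $\norm{B_b}_{\mathcal L(\mathcal{BV})} = O(s(b)/(b+1)^p)$ from \refequ{bound_An}--\refequ{bound_Bn}, together with $\norm{E_k}_{\mathcal L(\mathcal{BV})} = O(s(k)/k^{p-1})$ from Proposition \ref{decomp_Tn}, one obtains a triple sum which, for $p > 2$, factors as a convolution of two summable sequences against $1/(a+1)$; standard slowly varying function estimates (Lemma \ref{Potter_thm}) then show that the total is uniformly bounded in $n$ (in fact decays to $0$).

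The main obstacle is the $Q$-piece, where a naive triangle inequality produces a spurious $\log n$ factor coming from $\sum_a 1/(a+1)$. The cancellation rests on three ingredients: (i) $QB_b\varphi = c_b(\varphi)\indic_Y$ with $c_b(\varphi) := \int_Y B_b\varphi\,d\nu$; (ii) inserting the explicit formula $A_a\indic_Y(x) = h(v_1 v_0^{a-1}x)\abs{(v_1 v_0^{a-1})'(x)}/h(x)$ into the density identity \refequ{eq_densite_h} yields the telescoping relation $\sum_{a\geq 1} A_a\indic_Y = \indic_{[0,1/2]}$; (iii) by duality of $K$ with respect to $\nu$, $c_b(\varphi) = \int \varphi\,\psi_b\,d\nu$ where $\psi_b$ is the indicator of $\{y : f^i(y) \in [0,1/2] \text{ for } i<b,\ f^b(y) \in Y\}$, and $\sum_b \psi_b = 1$ $\nu$-almost everywhere, so that $\sum_{b\geq 0} c_b(\varphi) = \nu(\varphi) = 0$. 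Swapping the order of summation and using (iii), the $Q$-sum rewrites as $-\sum_{a=1}^{n-1} A_a\indic_Y\,\sum_{b > n-1-a} c_b(\varphi)$; the return-time estimate \refequ{queue_tps_retour} gives $\sum_{b > M}\nu(\psi_b) = O(s(M)/M^{p-1})$, whence $|\sum_{b > n-1-a} c_b(\varphi)| = O(\norm{d\varphi}\,s(n-a)/(n-a)^{p-1})$. Combined with $V(A_a\indic_Y) \leq C/(a+1)$, the resulting sum has exactly the same form as the $E_k$-piece and is therefore uniformly bounded by the same Potter-type estimate. Assembling the three contributions and converting between $V$ and $\norm{d\cdot}$ via the properties of bounded variation recalled in Section \ref{Section:BV} completes the proof.
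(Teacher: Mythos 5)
Your plan is correct and follows essentially the same route as the paper's proof: reduce to $\nu(\varphi)=0$, decompose $K^n=\sum_{a+k+b=n}A_a\circ T_k\circ B_b+C_n$ with $T_k=Q+E_k$ (Proposition \ref{decomp_Tn}), control the $C_n$- and $E_k$-parts via (\ref{bound_Cn})--(\ref{bound_Bn}) and the convolution Lemma \ref{double_somme}, and defuse the would-be $\log n$ in the $Q$-part through the cancellation $\sum_{b\geq 0}\nu(B_b\varphi)=\nu(\varphi)=0$, which is exactly the paper's step (\ref{prp1.9:V_Q_B})--(\ref{prp1.9:V_Q}). The only (harmless) difference is cosmetic: you bound the tails $\abs{\sum_{b>n-a}\nu(B_b\varphi)}$ by duality and the hitting-time tail of order $s(n-a)/(n-a)^{p-1}$, whereas the paper gets the same order by summing the bound $V(B_b\varphi)\leq CV(\varphi)s(b)/(b+1)^p$, and your telescoping identity $\sum_{a\geq 1}A_a\indic_Y=\indic_{[0,1/2]}$ is true but not actually needed.
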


\begin{proof}
As a first step, assume that $\nu(\varphi) = 0$.
For any $n \geq 1$, according to Proposition \ref{decomp_Tn}, (\ref{bound_An}) and (\ref{bound_Bn}), 
\begin{align*}
    V \Big( \sum_{a+k+b=n} A_a \circ E_k \circ B_b \varphi \Big) \leq CV(\varphi) \sum_{a+k+b = n} \frac{s(k) s(b)}{(a+1)(k+1)^{p-1}(b+1)^p}.
\end{align*}
Applying Lemma \ref{double_somme}, $\sum_{k+b = j}\frac{s(k)s(b)}{(k+1)^{p-1}(b+1)^{p}} \leq C\frac{s(j)}{(j+1)^{p-1}}$. Therefore the same computations as in the proof of Proposition 1.15 in \cite{DGM2010} associated with Lemma \ref{subpolynomial} lead to 
\begin{align}
        \label{prp1.9:V_Ek}
    V \Big( \sum_{a+k+b=n} A_a \circ E_k \circ B_b \varphi\Big) \leq CV(\varphi). 
\end{align}
Now, $\sum_{b=0}^{\infty}\nu(B_b\varphi) = \nu(\varphi) = 0$ so that
    \begin{align}
            \label{prp1.9:V_Q_B}
        \abs{\sum_{b=0}^{n-a}\nu(B_b\varphi)} 
        = \abs{\sum_{b > n-a}\nu(B_b\varphi)} 
        \leq \sum_{b > n-a} V(B_b\varphi) 
        \leq CV(\varphi) \sum_{b > n-a} \frac{s(b)}{(b+1)^p}.
    \end{align}
Hence, according to Lemma \ref{subpolynomial},
\begin{align}
        \label{prp1.9:V_Q}
    V\bigp{\sum_{a+k+b = n}A_a \circ Q \circ B_b \varphi} 
    &= 
    V\bigp{\sum_{a+k+b = n}A_a(\indic_Y) \nu(B_b \varphi)}\nonumber
    \\ &\leq 
    C V(\varphi)\sum_{i = 0}^n \frac{s(n+1-i)}{(i+1)(n+1-i)^p}\nonumber
    \\ & \leq 
    CV(\varphi).
\end{align}
Thus, as soon as $\nu(\varphi) =0$, combining the decomposition provided by (\ref{itérées_Kn}) and Proposition \ref{decomp_Tn} with the estimates (\ref{prp1.9:V_Ek}), (\ref{prp1.9:V_Q}) and (\ref{bound_Cn}) we infer that $V(K^n\varphi) \leq CV(\varphi)$.
Finally, for any $\varphi \in \mathcal{BV}$, the result follows since
\begin{align*}
    \norm{dK^n\varphi}
    =\norm{dK^n\varphi^{(0)}} 
    \leq V(K^n\varphi ^{(0)}) 
    \leq CV(\varphi^{(0)}) 
    \leq 3C\norm{d\varphi}. 
\end{align*}
\end{proof}

On another hand, in order to estimate weak $\alpha$-mixing coefficients, we can state the following covariance inequality, analogous to \parencite[Proposition 1.16]{DGM2010}:
\begin{prp}\label{analogue1.16}
    There exists $C > 0$ such that for any $n \geq 1$, any bounded function $\psi$ and any $\varphi \in \mathcal{BV}$:
    \begin{align*}
        \abs{\nu\bigp{\psi\circ T^n . (\varphi- \nu(\varphi))}} \leq \frac{Cs(n)}{n^{p-1}} \rVert d\varphi \lVert \rVert \psi \lVert_{\infty}.
    \end{align*}
\end{prp}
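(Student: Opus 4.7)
The plan is to combine the decomposition \refequ{itérées_Kn} with Proposition \ref{decomp_Tn} to split
\begin{align*}
\nu\bigl(\psi \cdot K^n \varphi^{(0)}\bigr)
= \nu\bigl(\psi \, C_n \varphi^{(0)}\bigr)
+ \sum_{\substack{a+k+b=n \\ k \geq 1}} \nu\bigl(\psi A_a Q B_b \varphi^{(0)}\bigr)
+ \sum_{\substack{a+k+b=n \\ k \geq 1}} \nu\bigl(\psi A_a E_k B_b \varphi^{(0)}\bigr),
\end{align*}
and to control each of the three contributions by $C\|\psi\|_\infty \|d\varphi\| \, s(n)/n^{p-1}$. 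Throughout I use $V(\varphi^{(0)}) \leq 3\|d\varphi\|$ and the tail estimate $\nu(R > n) = O(s(n)/n^p)$ from \refequ{queue_tps_retour}.

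For the $C_n$ piece, only trajectories staying in $[0,1/2]$ contribute, so $C_n \varphi(x) = \frac{h(v_0^n(x))(v_0^n)'(x)}{h(x)} \varphi(v_0^n(x))$ for $x \in [0,1/2]$. The change of variables $y = v_0^n(x)$ maps $[0,1/2]$ onto $[0, z_{n+1}]$ and yields $\nu(|C_n \varphi^{(0)}|) \leq \tfrac 12 V(\varphi) \, \nu([0, z_{n+1}])$. Since \refequ{estim_zn} gives $\nu(J_k) \asymp s(k)/k^p$, the tail mass $\nu([0, z_{n+1}]) = \sum_{k > n}\nu(J_k)$ is of order $s(n)/n^{p-1}$. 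For the $A_a E_k B_b$ piece, the trick is to observe that $A_a$ is a positive operator satisfying $A_a g = \indic_{[0,1/2]} \, K^a\bigl(g \, \indic_{\{R \geq a+1\}}\bigr)$, so that by positivity and $K$-invariance of $\nu$,
\begin{align*}
\nu(|A_a g|) \leq \nu(A_a |g|) \leq \|g\|_\infty \, \nu(R \geq a+1).
\end{align*}
Taking $g = E_k B_b \varphi^{(0)}$ and using $\|g\|_\infty \leq \tfrac 12 V(g) \leq C s(k) s(b) V(\varphi)/(k^{p-1}(b+1)^p)$ together with $\nu(R \geq a+1) = O(s(a+1)/(a+1)^p)$, summation over $a+k+b=n$ reduces to a triple slow-variation convolution which yields $O(s(n)/n^{p-1})$ upon iterating Lemma \ref{double_somme} twice.

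The delicate point is the $A_a Q B_b$ term, as $A_a Q B_b \varphi^{(0)} = \nu(B_b\varphi^{(0)}) \, A_a \indic_Y$ carries no decay in the gap $k$ and a term-by-term absolute-value bound would yield only $O(1)$. I would exploit the cancellation $\sum_{b \geq 0}\nu(B_b \varphi^{(0)}) = \nu(\varphi^{(0)}) = 0$ (noting $\nu(B_b\varphi^{(0)}) = \nu(\indic_{J_b}\varphi^{(0)})$ for $b \geq 1$ via the same change of variable, while $B_0$ restricts to $Y$), which permits one to rewrite
\begin{align*}
\sum_{\substack{a+k+b=n \\ k \geq 1}} \nu\bigl(\psi A_a Q B_b \varphi^{(0)}\bigr) = -\sum_{a=0}^{n-1} \nu(\psi A_a \indic_Y) \sum_{b \geq n-a} \nu(\indic_{J_b} \varphi^{(0)}).
\end{align*}
The inner tail is $O(V(\varphi) s(n-a)/(n-a)^{p-1})$, and using $\nu(A_a \indic_Y) = \nu(R \geq a+1) = O(s(a+1)/(a+1)^p)$, the resulting convolution $\sum_{a=0}^{n-1} \frac{s(a+1)}{(a+1)^p} \cdot \frac{s(n-a)}{(n-a)^{p-1}}$ is again $O(s(n)/n^{p-1})$. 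This cancellation is the main obstacle: the $O(1)$ variation estimate of Proposition \ref{analogue1.15} alone does not imply quantitative decay for the covariance.
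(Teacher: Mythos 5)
Your argument is correct and essentially reproduces the paper's own proof: the same decomposition $K^n=\sum_{a+k+b=n}A_a\circ(Q+E_k)\circ B_b+C_n$, the same mass bounds for $C_n$ and $A_a$ via the return-time tail, the operator-norm bounds on $E_k$ and $B_b$ combined through Lemma \ref{double_somme}, and the same cancellation $\sum_{b\geq 0}\nu(B_b\varphi^{(0)})=0$ (the paper's estimate \refequ{prp1.9:V_Q_B}) to handle the $A_a\circ Q\circ B_b$ term. The only cosmetic difference is that you bound $\nu(\abs{A_a g})$ by $\norm{g}_\infty\,\nu(R\geq a+1)$ using $K$-invariance of $\nu$, where the paper uses the bound $\nu(\abs{A_a g})\leq C\norm{g}_\infty z_a$ via boundedness of $h$ on $Y$; these give the same order $s(a)(a+1)^{-p}$.
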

\begin{proof}
    We can assume without loss of generality that $\nu(\varphi) = 0$. We have 
    \begin{align*}
        \abs{\nu(\psi\circ T^n \varphi})
        \leq \abs{\nu(\psi K^n\varphi)}
        \leq \rVert \psi \lVert_{\infty} \nu(\abs{K^n\varphi}).
    \end{align*}
    We can estimate $\nu (\abs{K^n\varphi})$ combining the decomposition $K^n= \sum_{a+k+b}A_a \circ (E_k + Q) \circ B_b +C_n$, Proposition \ref{analogue1.15} and bounds (\ref{bound_Cn}), (\ref{bound_An}), (\ref{bound_Bn}). \\
    First
    \begin{align*}
        \nu(\abs{C_n\varphi}) &\leq C \rVert \varphi \lVert_{\infty} \nu(K^n \indic_{[0,z_{n+1}]})  
        = C \rVert \varphi \lVert_{\infty} \nu({[0,z_{n+1}]}) 
        \leq C\norm{\varphi}_{\infty}\frac{s(n)}{(n+1)^{p-1}}.
    \end{align*}
    Observe that by definition of $A_n$, 
    \begin{align*}
         \nu(\abs{A_n\psi}) 
         \leq C \norm{\psi}_{\infty} \nu (K^n\indic_{Y\cap T^{-1}([0,z_n])}) 
         = C\norm{\psi}_{\infty} \nu(Y\cap T^{-1}([0,z_n])).
    \end{align*}
    Since $h$ is bounded on $Y$, $\nu(\abs{A_n\psi}) \leq C \norm{\psi}_{\infty}z_n$ so that
    \begin{align*}
         \nu(\abs{A_n\psi}) \leq \frac{C\norm{\psi}_{\infty}s(n)}{(n+1)^p}.
    \end{align*}
    Hence
    \begin{align*}
        \nu \bigp{\abs{\sum_{a+k+b = n}A_a \circ E_k \circ B_b\varphi}} 
        &\leq CV(\varphi)\sum_{a+k+b=n}\frac{\lVert E_k \circ B_b\varphi\rVert_{\infty}s(a)}{(a+1)^p} \\
        &\leq CV(\varphi)\sum_{a+k+b=n}\frac{s(a)s(k)s(b)}{(a+1)^p(k+1)^{p-1}(b+1)^p} \\
        &\leq CV(\varphi)\sum_{k=0}^n \frac{s(k)s(n-k)}{(k+1)^{p-1}(n-k+1)^p} \\
        &\leq \frac{CV(\varphi)s(n)}{(n+1)^{p-1}}
    \end{align*}
    by using twice Lemma \ref{double_somme}. It remains to control $\sum_{a+b \leq n}A_a \circ Q \circ B_b\varphi = \sum_{a+b\leq n}A_a(\indic_Y)\nu(B_b\varphi)$. 
    As $s$ is slowly varying, according to Lemma \ref{Potter_thm}, there exists $C >0$ not depending on the indices such that $\frac{s(b)}{s(n-a)} \leq \frac{Cb}{n-a}$ for $b > n-a$ so that
    \begin{align*}
         \sum_{b > n-a} \frac{CV(\varphi)s(b)}{(b+1)^p} 
         \leq \frac{CV(\varphi)s(n-a)}{n-a}\sum_{b>n-a}b^{1-p}.
    \end{align*}
    Hence, by (\ref{prp1.9:V_Q_B}), $\abs{\sum_{b=0}^{n-a}\nu(B_bf)} \leq \frac{CV(\varphi)s(n-a)}{(n-a+1)^{p-1}}$ and
    \begin{align*}
       \nu \Big(\sum_{a+b\leq n}A_a(\indic_Y)\nu(B_b\varphi) \Big) 
       \leq CV(\varphi)\sum_{a=0}^n \frac{s(a)s(n-a)}{(a+1)^{p}(n-a+1)^{p-1}} 
       \leq \frac{CV(\varphi)s(n)}{(n+1)^{p-1}}
    \end{align*}
    applying Lemma \ref{double_somme}. Gathering the terms, we infer that $\nu(\abs{K^n\varphi}) \leq \frac{CV(\varphi)s(n)}{(n+1)^{p-1}}$ and the result follows.
\end{proof}
\subsection{Proof of the main theorems}
We can now establish estimates on the mixing coefficients.
\begin{proof}[Proof of Theorem \ref{thm_alpha}]
    Let $\varphi, \psi \in \mathcal{BV}$ such that $\norm{d\varphi} \leq 1$ and $\norm{\psi}_{\infty} \leq 1$. Recall that $ \lVert \varphi^{(0)}\rVert_\infty \leq V(\varphi^{(0)}) \leq 3\norm{d\varphi}$. By applying Proposition \ref{analogue1.15},
    \begin{align*}
        \lVert d\big( \varphi^{(0)}K^n(\psi)\big)\rVert 
        &\leq \norm{d\varphi} \norm{\psi}_{\infty} + \norm{dK^n\psi} \norm{\varphi^{(0)}}_{\infty} 
        \leq 1 + 3C\norm{d\psi}.
    \end{align*}
    Let $\varphi_1,\ldots,\varphi_k \in \mathcal{BV}_1$, $n_1,\ldots,n_k \in \mathbb{N}$ and set $\varphi = \varphi_1^{(0)}K^{n_2}(\varphi_2^{(0)}K^{n_3}(\ldots K^{n_{k-1}}(\varphi_{k-1}^{(0)}K^{n_k}(\varphi_k^{(0)}))\ldots))$. By iterating the above inequality,
    \begin{align*}
        \norm{d\varphi} \leq 1 + C + C^2 +\cdot \cdot \cdot+ C^{k-1}.
    \end{align*}
    Therefore $ \alpha_{k, \mathbf Y}(n) \leq (1+ C+\cdot \cdot \cdot+ C^{k-1}) \alpha_{1, \mathbf Y}(n)$. Now, by duality, according to Proposition \ref{analogue1.16} $\alpha_{1, \mathbf Y}(n) \leq \frac{Cs(n)}{n^{p-1}}$, what concludes the proof of the theorem.
\end{proof}
\begin{proof}[Proof of Theorem \ref{hip_BV}]
    Since $(Y_n,Y_{n-1}\ldots,Y_1) = (f, f^2,\ldots, f^n)$ in law for a Markov chain $(Y_i)_{i \in \mathbb{Z}}$ with transition kernel $K$ and invariant measure $\nu$ (see for instance \parencite[Lemma XI.3]{HH2001}), to prove Theorem \ref{hip_BV} it suffices to prove the same result with $Y_k$ replacing $\varphi \circ f^k$. This follows by applying \parencite[Theorem 3.3]{DM2017} together with the fact that by Theorem \ref{thm_alpha}, $n^{p-1}\alpha_{2, \mathbf Y}(n) = O(s(n))$ with $s(n) \longrightarrow 0$.
\end{proof}

\appendix
\section{Annex}
\subsection{Slowly varying functions}
    \label{annex:slowlyvar}
For the reader's convenience, we recall in this section some results on slowly varying functions. We refer to \parencite[Chapter 1]{BGT} for more details.
\begin{df}
    A function $\rho : \mathbb{R}_{+}^{*} \longrightarrow \mathbb{R}_{+}^{*}$ is slowly varying (at $+\infty$) if for any $\lambda > 0$ 
    \begin{align*}
        \frac{\rho(\lambda x)}{\rho(x)} \underset{x \to \infty}{\longrightarrow} 1.
    \end{align*}
    Similarly $\rho$ is slowly varying at $0$ if $x \mapsto \rho(1/x)$ is slowly varying at $+ \infty$. A sequence $(\ell(n))_n$ is slowly varying (at $+ \infty$) if there exists some slowly varying function $\rho$ such that $\ell(n) = \rho(n)$. 
\end{df}
In the definition of a Holland map there is some regularity conditions on the slowly varying function. As pointed out in Gouëzel thesis \cite{GouThese}, they are useful to control the distortion and are not too restrictive according to the following theorem.
\begin{thm}[Smooth variation theorem, \cite{BGT}]
    Let $\rho$ be a slowly varying function. There exist $\Tilde{\rho}_1$, $\Tilde{\rho}_2$ slowly varying such that
    \begin{itemize}
        \item for any $n \geq 1$ and any $i= 1,2$, $x^n{\Tilde{\rho}_i(x)}\bigp{\Tilde{\rho}_i^{(n)}(x)}^{-1} \underset{x \to \infty}{\longrightarrow} 0$,
        \item $\Tilde{\rho}_1 \sim \Tilde{\rho}_2$ and $\Tilde{\rho}_1 \leq \rho  \leq \Tilde{\rho}_2$ on a neighborhood of $+\infty$.
    \end{itemize}
\end{thm}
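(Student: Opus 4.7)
The plan is to follow the classical route due to Bojanic and Seneta via Karamata's representation and a logarithmic-scale mollification, as presented in BGT \S1.8.

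First I would invoke Karamata's representation theorem: since $\rho$ is slowly varying, there exist $a>0$, a measurable function $c(\cdot)$ with $c(x) \to c \in (0,\infty)$, and a continuous function $\varepsilon(\cdot)$ with $\varepsilon(t) \to 0$, such that
\[
\rho(x) = c(x)\exp\Bigl(\int_a^x \tfrac{\varepsilon(t)}{t}\,dt\Bigr), \qquad x \geq a.
\]
This reduces the smoothing of $\rho$ to the smoothing of the ``index'' $\varepsilon$ and the bounded factor $c(x)/c$.

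Next I would mollify $\varepsilon$ on a logarithmic scale. Pick $\phi \in C_c^\infty(\mathbb{R})$, nonnegative, $\int \phi = 1$, supported in $[-1,1]$, and set
\[
\tilde\varepsilon(x) \;=\; \int_{\mathbb{R}} \varepsilon(x e^u)\,\phi(u)\, du.
\]
Then $\tilde\varepsilon \in C^\infty$, $\tilde\varepsilon(x) \to 0$, and differentiating under the integral together with the substitution $t=xe^u$ yields $x^k \tilde\varepsilon^{(k)}(x)\to 0$ for every $k\geq 1$; this is where the Uniform Convergence Theorem for slowly varying functions enters, guaranteeing that $\varepsilon$ is essentially constant on the scale $[e^{-1}x,ex]$. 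I then define the smooth approximant
\[
\tilde\rho_0(x) \;=\; c\exp\Bigl(\int_a^x \tfrac{\tilde\varepsilon(t)}{t}\,dt\Bigr).
\]
An induction based on $\tilde\rho_0' = (\tilde\varepsilon/x)\tilde\rho_0$ and Faà di Bruno's formula expresses $\tilde\rho_0^{(n)}/\tilde\rho_0$ as a polynomial in the quantities $\tilde\varepsilon^{(j)}(x)/x^{n-j}$ for $0 \leq j \leq n-1$; each summand is $o(x^{-n})$, so $x^n\tilde\rho_0^{(n)}(x)/\tilde\rho_0(x)\to 0$. The equivalence $\tilde\rho_0 \sim \rho$ follows from $c(x)/c\to 1$ and from the convergence of $\int_a^x (\tilde\varepsilon-\varepsilon)(t)/t\,dt$, again by the Uniform Convergence Theorem.

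Finally, to produce the two-sided sandwich, I would set $\tilde\rho_1(x) = (1-\eta(x))\tilde\rho_0(x)$ and $\tilde\rho_2(x) = (1+\eta(x))\tilde\rho_0(x)$, where $\eta \in C^\infty$ is a log-scale mollification of $\sup_{y\geq x}|\rho(y)/\tilde\rho_0(y)-1|$ and $\eta(x)\to 0$. Then $\tilde\rho_1 \leq \rho \leq \tilde\rho_2$ on a neighborhood of $+\infty$, both $\tilde\rho_i$ inherit the derivative condition from $\tilde\rho_0$ together with $x^k \eta^{(k)}(x)\to 0$, and $\tilde\rho_1/\tilde\rho_2 \to 1$ is immediate. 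The main technical obstacle is the derivative estimate in the third step: only the \emph{logarithmic} scale of the mollification makes the derivatives shrink fast enough in $x$, and the inductive bookkeeping must carefully track how each additional derivative contributes an extra factor of $1/x$ rather than a constant; a fixed-bandwidth mollification would fail this estimate entirely.
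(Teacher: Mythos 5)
The paper does not prove this statement at all: it is quoted verbatim (up to a typo, see below) from the Smooth Variation Theorem in \cite{BGT}, so there is no internal proof to compare with, and your attempt can only be measured against the classical argument. Your route is that classical argument in spirit --- a logarithmic-scale mollification followed by a two-sided sandwich --- except that you mollify the Karamata index $\varepsilon$ rather than $\log\rho(e^x)$ itself as in \cite{BGT}; this variant is viable, and your derivative bookkeeping ($x^k\tilde{\varepsilon}^{(k)}(x)\to 0$ for $k\geq 1$, then Fa\`a di Bruno to get $x^n\tilde{\rho}_0^{(n)}(x)/\tilde{\rho}_0(x)\to 0$) is correct. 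Two remarks of presentation: the Uniform Convergence Theorem is not what these estimates rest on --- they follow from $\varepsilon(t)\to 0$ alone, since $\sup_{|u|\leq 1}\abs{\varepsilon(xe^u)}\leq\sup_{t\geq x/e}\abs{\varepsilon(t)}\to 0$; and the conclusion you prove is the correct one, $x^n\tilde{\rho}_i^{(n)}(x)/\tilde{\rho}_i(x)\to 0$, whereas the statement as printed in the paper has this ratio inverted.

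Two steps of your writeup do need repair. First, convergence of $\int_a^x\bigp{\tilde{\varepsilon}-\varepsilon}(t)\,t^{-1}dt$ does not yield $\tilde{\rho}_0\sim\rho$, only $\rho/\tilde{\rho}_0\to e^{L}$ for a finite constant $L$: by Fubini and the substitution $s=te^u$, $\int_a^x\bigp{\tilde{\varepsilon}-\varepsilon}(t)\,t^{-1}dt=\int\phi(u)\bigp{\int_x^{xe^u}\varepsilon(s)s^{-1}ds-\int_a^{ae^u}\varepsilon(s)s^{-1}ds}du$, whose limit is $-\int\phi(u)\int_a^{ae^u}\varepsilon(s)s^{-1}ds\,du$, in general nonzero. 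If $e^L\neq 1$, then $d(x)\eqdef\sup_{y\geq x}\abs{\rho(y)/\tilde{\rho}_0(y)-1}$ does not tend to $0$ and your final sandwich gives $\tilde{\rho}_1\not\sim\tilde{\rho}_2$; the fix is one line (replace $\tilde{\rho}_0$ by $e^{L}\tilde{\rho}_0$, i.e.\ absorb the constant). Second, a symmetric log-scale mollification of the nonincreasing function $d$ need not dominate $d$ pointwise, so $\tilde{\rho}_1\leq\rho\leq\tilde{\rho}_2$ is not guaranteed as written; take the kernel $\phi$ supported in $[-1,0]$, so that $\eta(x)=\int d(xe^u)\phi(u)du\geq d(x)$ by monotonicity, and the derivative estimates for $\eta$ go through exactly as for $\tilde{\varepsilon}$. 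With these two corrections (plus the routine Leibniz check that the products $(1\pm\eta)\tilde{\rho}_0$ keep the derivative property, which you indicate), your argument is complete and is a legitimate alternative to the proof in \cite{BGT}, which smooths $\log\rho(e^x)$ directly and thereby avoids the constant issue altogether.
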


\begin{lem}[Proposition 1.3.6, \cite{BGT}]\label{subpolynomial}
    If $\ell$ is slowly varying (at $\infty$) then for any $a >0$
    \begin{align*}
        x^{-a}\ell(x) \underset{x \to \infty}{\longrightarrow} 0.
    \end{align*}
    In particular if $a > 1$ then $\sum_{n=1}^{\infty}\ell(n)n^{-a} < \infty$.
\end{lem}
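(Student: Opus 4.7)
The plan is to deduce this classical fact directly from Potter's bounds (Lemma \ref{Potter_thm}), which are already a workhorse of the paper and have been applied several times in Sections~3 and~4. Potter's bound, in its standard form, says that for every $\delta > 0$ there exist a constant $C = C(\delta) > 0$ and a threshold $x_0$ such that $\ell(y)/\ell(x) \leq C\max((y/x)^\delta, (y/x)^{-\delta})$ whenever $x,y \geq x_0$. Fixing the reference point $x = x_0$ and letting $y = x$ vary, this collapses to the polynomial majorization
\begin{align*}
    \ell(x) \leq C_0 \, x^\delta \qquad \text{for all } x \geq x_0,
\end{align*}
where $C_0 \eqdef C \ell(x_0) x_0^{-\delta}$.

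For the first assertion, I would pick any $\delta \in (0,a)$ and conclude immediately from the above inequality that
\begin{align*}
    x^{-a}\ell(x) \leq C_0 \, x^{\delta - a} \xrightarrow[x \to \infty]{} 0,
\end{align*}
since $\delta - a < 0$. This gives the pointwise decay.

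For the series statement with $a > 1$, the same scheme applies: choose $\delta \in (0, a-1)$, so that $\delta - a < -1$, and use Potter's bound to get $\ell(n) \leq C_0 n^{\delta}$ for all $n$ past some $n_0$. Then
\begin{align*}
    \sum_{n \geq n_0} \frac{\ell(n)}{n^a} \leq C_0 \sum_{n \geq n_0} \frac{1}{n^{a-\delta}} < \infty
\end{align*}
by comparison with a convergent $p$-series, and the finitely many leading terms are bounded trivially.

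There is no serious obstacle here: the only subtle point is to convert Potter's two-variable comparison into a one-variable polynomial majorization of $\ell$, which is done simply by freezing one of the two arguments as a reference point. Once this reduction is made, the rest is elementary real analysis.
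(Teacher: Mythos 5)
Your proof is correct. Note that the paper gives no proof of this lemma at all: it is quoted verbatim from Bingham--Goldie--Teugels (Proposition 1.3.6), exactly as Lemma \ref{Potter_thm} is quoted from the same reference, so there is nothing in the paper to compare against line by line. The classical proof in \cite{BGT} goes through the representation theorem, writing $\ell(x)=c(x)\exp\bigp{\int_{a}^{x}\varepsilon(t)t^{-1}\,dt}$ with $c(x)\to c>0$ and $\varepsilon(t)\to 0$, and reading off the subpolynomial growth directly. Your route instead freezes the lower argument of Potter's two-variable bound at the threshold $x_0$, which correctly yields the one-variable majorization $\ell(x)\leq C_0 x^{\delta}$ for $x\geq x_0$, and then both assertions follow by taking $\delta<a$ (respectively $\delta<a-1$, so that $a-\delta>1$). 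This is legitimate and non-circular: in \cite{BGT}, Potter's theorem (Theorem 1.5.6) is itself deduced from the representation theorem without any appeal to Proposition 1.3.6, so deriving the latter from the former is sound; it also has the cosmetic advantage of using only a tool the paper has already imported as Lemma \ref{Potter_thm}. Your handling of the initial terms of the series is fine as well, since $\ell$ takes finite positive values, so the finitely many terms with $n<n_0$ contribute a finite amount.
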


\begin{lem}[Theorem 1.5.6 (i), \cite{BGT}]\label{Potter_thm}
    If $\ell$ is slowly varying then for any $A >1$ and $\delta >0$ there exists $x_0=x_0(A,\delta)$ such that for any $x_0 \leq x \leq y$
    \begin{align*}
        \frac{1}{A}\Big(\frac{y}{x}\Big)^{-\delta} \leq \frac{\ell(y)}{\ell(x)} \leq A\Big(\frac{y}{x}\Big)^{\delta}.
    \end{align*}
\end{lem}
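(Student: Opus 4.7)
The plan is to reduce Potter's bounds to Karamata's representation theorem for slowly varying functions. Recall that this classical representation asserts that any slowly varying $\ell$ admits, on some interval $[a, \infty)$, the form
\begin{equation*}
\ell(x) = c(x) \exp\left(\int_a^x \frac{\eta(t)}{t}\, dt\right),
\end{equation*}
where $c(x) \to c_0 \in (0, \infty)$ and $\eta(t) \to 0$ as $t \to \infty$. First I would invoke this representation, then bound the two factors of the ratio $\ell(y)/\ell(x)$ separately.

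Given $A > 1$ and $\delta > 0$, I would first pick $x_1$ large enough that $|\eta(t)| \leq \delta$ for every $t \geq x_1$. For $x_1 \leq x \leq y$, the trivial estimate $\left|\int_x^y \eta(t)/t\, dt\right| \leq \delta \log(y/x)$ yields
\begin{equation*}
(y/x)^{-\delta} \leq \exp\left(\int_x^y \frac{\eta(t)}{t}\, dt\right) \leq (y/x)^{\delta}.
\end{equation*}
Next, using $c(x) \to c_0 > 0$, I would pick $x_2$ so that $A^{-1} \leq c(y)/c(x) \leq A$ whenever $x, y \geq x_2$. Setting $x_0 := \max(x_1, x_2)$ and multiplying the two pairs of bounds inside the Karamata factorization of $\ell(y)/\ell(x)$ produces the desired inequality whenever $x_0 \leq x \leq y$.

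The main obstacle is not this bookkeeping but the invocation of Karamata's representation itself, which rests on the \emph{uniform convergence theorem} for slowly varying functions, namely that $\ell(\lambda x)/\ell(x) \to 1$ uniformly in $\lambda$ on compact subsets of $(0,\infty)$. That uniform convergence is a nontrivial consequence of measurability via a Steinhaus-type argument. If one wished to bypass the representation, a self-contained alternative is to apply uniform convergence directly: decompose $\ell(y)/\ell(x) = \prod_{k=0}^{N-1} \ell(\lambda_{k+1})/\ell(\lambda_k)$ along a geometric chain $x = \lambda_0 < \cdots < \lambda_N = y$ with ratios $\lambda_{k+1}/\lambda_k \in [1, r]$, bound each factor by $1 \pm \varepsilon$ using uniform convergence on $[1,r]$, and tune $r$ and $\varepsilon$ to obtain the power-law envelope. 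The additive structure of Karamata's representation, however, makes the constants in the final estimate substantially more transparent.
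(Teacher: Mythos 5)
Your proof is correct: the paper does not prove this lemma at all but simply quotes Potter's theorem from Bingham--Goldie--Teugels, and your argument via Karamata's representation theorem (bounding the $\exp\bigl(\int_x^y \eta(t)/t\,dt\bigr)$ factor by $(y/x)^{\pm\delta}$ once $|\eta|\leq\delta$, and the ratio $c(y)/c(x)$ by $A^{\pm 1}$) is precisely the standard proof given in that reference. Nothing further is needed.
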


\begin{lem}\label{double_somme}
        Let $u,v$ be positive slowly varying sequences and let $s,r > 1$, then
        \begin{align*}
            \sum_{i+j=n} \frac{u_iv_j}{(i+1)^r(j+1)^s} \leq C \Big(\frac{u_n}{(n+1)^r}+ \frac{v_n}{(n+1)^s} \Big).
        \end{align*}
    \end{lem}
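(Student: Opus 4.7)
The plan is to split the sum at the midpoint $i = \lfloor n/2 \rfloor$, exploiting the fact that whenever one of the indices $i, j$ (with $i + j = n$) is of order $n$, Potter's inequality (Lemma \ref{Potter_thm}) allows us to replace the corresponding slowly varying factor by its value at $n$ up to a multiplicative constant.

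As a preliminary step, I would observe that the series $\sum_{i \geq 0} u_i(i+1)^{-r}$ and $\sum_{j \geq 0} v_j(j+1)^{-s}$ both converge. Indeed, since $r > 1$, picking $\delta > 0$ small enough that $r - \delta > 1$ and applying Lemma \ref{Potter_thm} yields $u_i \leq C(i+1)^\delta$ for $i$ large, so that $u_i(i+1)^{-r} \leq C(i+1)^{-(r-\delta)}$, which is summable; the same works for $v$. This reduces the question to controlling how $u_i v_{n-i}$ concentrates near the endpoints.

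I would then decompose
\[
\sum_{i + j = n} \frac{u_i v_j}{(i+1)^r (j+1)^s} = \sum_{i \leq n/2} \frac{u_i v_{n-i}}{(i+1)^r (n-i+1)^s} + \sum_{i > n/2} \frac{u_i v_{n-i}}{(i+1)^r (n-i+1)^s}.
\]
For $n$ large enough, in the first sum $(n-i+1) \geq c(n+1)$, and by Lemma \ref{Potter_thm} applied to $v$ we have $v_{n-i} \leq C v_n$ uniformly in $i \in \{0, \ldots, \lfloor n/2 \rfloor\}$. Hence
\[
\sum_{i \leq n/2} \frac{u_i v_{n-i}}{(i+1)^r (n-i+1)^s} \leq \frac{C v_n}{(n+1)^s} \sum_{i \geq 0} \frac{u_i}{(i+1)^r} \leq \frac{C' v_n}{(n+1)^s}.
\]
The symmetric argument, swapping the roles of $u, r, i$ with $v, s, j$, bounds the second sum by $C' u_n/(n+1)^r$. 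Adding these yields the desired inequality for all sufficiently large $n$.

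The remaining — minor — point is the range of small $n$ below the Potter threshold: there the left-hand side consists of boundedly many terms, each controlled by a constant, while positivity of the slowly varying sequences $u$ and $v$ ensures that the right-hand side is bounded away from zero on any finite range, so adjusting $C$ absorbs this case. The only real choice in the argument is that of $\delta$ in Lemma \ref{Potter_thm}: it must be chosen small enough to guarantee both the summability of the residual series and the uniform comparison of the slowly varying factor on $[n/2, n]$, which is always possible since $r, s > 1$.
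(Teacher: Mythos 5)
Your proof is correct and follows essentially the same route as the paper: split the sum at $n/2$, use Potter's bound (Lemma \ref{Potter_thm}) to compare the slowly varying factor at the large index with its value at $n$, and bound the remaining half by the convergent series $\sum_k u_k k^{-r}$ or $\sum_k v_k k^{-s}$. Your extra remarks on the convergence of these series and on small $n$ only make explicit what the paper leaves implicit.
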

    \begin{proof}
       We split the sum into two parts:
       \begin{align*}
            \sum_{i+j=n} \frac{u_iv_j}{(i+1)^r(j+1)^s}
            &\leq \sum_{n/2 \leq i \leq n} \frac{u_iv_{n-i+1}}{(i+1)^r(n-i+1)^s} + \sum_{n/2 \leq j \leq n}\frac{u_{n-j+1} v_j}{(n-j+1)^r(j+1)^s} \\
            &\leq 2^rn^{-r}u_n \sup_{n/2 \leq i \leq n} \frac{u_i}{u_n} \sum_{k \geq 1}v_k k^{-s} +  2^sn^{-s}v_n \sup_{n/2 \leq j \leq n} \frac{v_j}{v_n} \sum_{k \geq 1} u_kk^{-r}.
       \end{align*}
       Since $r,s > 1$, both above sums are finite. Moreover, the suprema are bounded independently of $n$ by Lemma \ref{Potter_thm}, which provides us the expected upper bound.
    \end{proof}
    
\subsection{Ruelle's theorem for generalized transfer operator}
We present here Ruelle's theorem in a simplified form that we used to control the essential spectral radius of $\Lambda(z)$. 
Consider for each $n \geq 1$:
\begin{itemize}
    \item $I_n$ a sub-interval of $[0,1]$,
    \item a continuous and increasing function $\psi_n : I_n \longrightarrow [0,1] $,
    \item $\phi_n : [0,1] \longrightarrow \mathbb{C}$ of bounded variation and such that $\sum_{n=1}^{\infty}V(\phi_n) < \infty$. 
\end{itemize}
Consider the operators defined on $\mathcal{BV}$ by:
\begin{align*}
    &\mathcal{M}f(x) = \sum_{n=1}^{\infty}\phi_n(x)f(\psi_n(x))
    \qquad \text{and} \qquad
    \Hat{\mathcal{M}}f(x) = \sum_{n=1}^{\infty}\phi_n(\psi_n^{-1}(x))f(\psi_n^{-1}(x))
\end{align*}
with $f(\psi_n(x)) =0$ if $x \notin I_n$. 
With these notations in mind, point (b) of Theorem B.1 in \cite{Ruelle} writes as follows.
\begin{thm}[Ruelle]\label{Ruelle_thm}
    The essential spectral radius of $\mathcal{M}$ is bounded by $s = \underset{m \to \infty}{\lim}(\lVert \Hat{\mathcal{M}}^m)\rVert_{\mathcal{L}(\mathcal{BV})})^{1/m}$.
\end{thm}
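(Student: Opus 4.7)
The plan is to establish the bound via the classical Hennion--Nussbaum strategy, which derives essential spectral radius bounds from iterated Lasota--Yorke type inequalities. First, I would expand the iterates $\mathcal{M}^m$ explicitly by induction. Denoting a multi-index $\underline{n} = (n_1,\ldots,n_m)$, $\Psi_{\underline{n}} = \psi_{n_m} \circ \cdots \circ \psi_{n_1}$ (defined on the intersection of appropriate preimages), and $\Phi_{\underline{n}}(x) = \phi_{n_1}(x)\,\phi_{n_2}(\psi_{n_1}(x)) \cdots \phi_{n_m}(\psi_{n_{m-1}} \circ \cdots \circ \psi_{n_1}(x))$, one obtains
\[
    \mathcal{M}^m f(x) = \sum_{\underline{n}} \Phi_{\underline{n}}(x)\, f(\Psi_{\underline{n}}(x))
    \qquad\text{and}\qquad
    \Hat{\mathcal{M}}^m g(x) = \sum_{\underline{n}} \Phi_{\underline{n}}(\Psi_{\underline{n}}^{-1}(x))\, g(\Psi_{\underline{n}}^{-1}(x)).
\]

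Second, I would derive a Lasota--Yorke inequality of the form $V(\mathcal{M}^m f) \leq a_m \norm{f}_\infty + b_m V(f)$. Using the submultiplicativity $V(\phi\cdot g) \leq \norm{\phi}_\infty V(g) + V(\phi) \norm{g}_\infty$ applied to each summand $\Phi_{\underline{n}}(f \circ \Psi_{\underline{n}})$, and the monotonicity of $\Psi_{\underline{n}}$ which gives $V(f\circ \Psi_{\underline{n}}) \leq V(f)$, the coefficient $b_m$ reduces to $\sup_x \sum_{\underline{n}} \abs{\Phi_{\underline{n}}(x)}$. A change of variables $y = \Psi_{\underline{n}}(x)$ identifies this exactly with $\norm{\Hat{\mathcal{M}}^m \indic}_\infty$, hence $b_m \leq \norm{\Hat{\mathcal{M}}^m}_{\mathcal L(\mathcal{BV})}$ (up to the usual $\norm{\cdot}_\infty \leq V(\cdot)$ control on $\mathcal{BV}$). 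The constant $a_m$ gathers the $\sum_{\underline{n}} V(\Phi_{\underline{n}}) \norm{f}_\infty$ contributions; using the hypothesis $\sum_n V(\phi_n) < \infty$ together with an inductive bound on $V(\Phi_{\underline{n}})$ built from $V(\phi_{n_k} \circ \text{(monotone)}) \leq V(\phi_{n_k})$, one shows that $a_m$ is finite for every fixed $m$ (its growth rate is irrelevant for the spectral conclusion).

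Third, I would conclude via Hennion's theorem. The canonical injection $\mathcal{BV} \hookrightarrow L^1([0,1])$ is compact (Helly selection), and $\mathcal{M}$ is bounded on $\mathcal{BV}$ (obtained in passing from the $m=1$ case of the Lasota--Yorke estimate). The Nussbaum formula then yields that the essential spectral radius of $\mathcal{M}$ on $\mathcal{BV}$ is bounded above by $\liminf_{m\to\infty} b_m^{1/m}$. Combined with the estimate $b_m \leq \norm{\Hat{\mathcal{M}}^m}_{\mathcal L(\mathcal{BV})}$, this gives the bound by $s$.

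The main obstacle is the second step: carefully identifying the coefficient $b_m$ in the iterated Lasota--Yorke inequality with a quantity controlled by $\norm{\Hat{\mathcal{M}}^m}$ rather than just $\norm{\Hat{\mathcal{M}}}^m$, and ensuring that the monotonicity of the $\psi_n$ is used at each step to prevent $V(f \circ \Psi_{\underline{n}})$ from exploding. The asymmetric roles of $\mathcal{M}$ and $\Hat{\mathcal{M}}$ (where the weight in $\Hat{\mathcal{M}}$ is evaluated at $\psi_n^{-1}(x)$ so that when summed, the collection $\{\psi_n^{-1}(x)\}_n$ yields all the preimages under the $\psi_n$'s) is what makes the bound $\sup_x \sum_{\underline{n}} |\Phi_{\underline{n}}(x)| = \norm{\Hat{\mathcal{M}}^m \indic}_\infty$ hold cleanly.
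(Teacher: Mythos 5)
First, note that the paper does not prove this statement: it is imported verbatim as point (b) of Theorem B.1 of Ruelle \cite{Ruelle}, so your proposal can only be assessed on its own merits, not against an argument in the text.

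Your sketch has a genuine gap at its central step (your second step). Two distinct problems occur there. (a) Term-by-term submultiplicativity applied to $\mathcal{M}^m f=\sum_{\underline{n}}\Phi_{\underline{n}}\,(f\circ\Psi_{\underline{n}})$ yields a coefficient of $V(f)$ of the form $\sum_{\underline{n}}\norm{\Phi_{\underline{n}}}_\infty$ (a sum of suprema), not $\sup_x\sum_{\underline{n}}\abs{\Phi_{\underline{n}}(x)}$ (a supremum of the sum); passing from the former to the latter is only possible if one localizes the variation of $f$ on the images $\Psi_{\underline{n}}(I_{\underline{n}})$, which is precisely the delicate part of Ruelle-type arguments and is absent from your proposal. (b) Even the smaller quantity $\sup_x\sum_{\underline{n}}\abs{\Phi_{\underline{n}}(x)}$ is not $\norm{\Hat{\mathcal{M}}^m\indic}_\infty$: in the sum over $\underline{n}$ at a fixed $x$, the substitution $y=\Psi_{\underline{n}}(x)$ produces a different point $y$ for each branch, so the terms cannot be reassembled into the value of $\Hat{\mathcal{M}}^m\indic$ at a single point; $\Hat{\mathcal{M}}^m\indic(y)$ only sums over those $\underline{n}$ whose image contains $y$, evaluated at branch-dependent preimages. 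A concrete counterexample: take $\psi_0(x)=x/2$, $\psi_1(x)=(x+1)/2$ on $I_0=I_1=[0,1]$ and $\phi_0=\phi_1\equiv c>0$, so that $\mathcal{M}$ is $c$ times the transfer operator of the doubling map. Then every $x$ has $2^m$ backward branches through it and $\sup_x\sum_{\underline{n}}\abs{\Phi_{\underline{n}}(x)}=(2c)^m$, whereas the images $\Psi_{\underline{n}}([0,1])$ are the essentially disjoint dyadic intervals of generation $m$, so $\Hat{\mathcal{M}}^m\indic\equiv c^m$ and $s=c$. With your $b_m=(2c)^m$, the Hennion--Nussbaum formula only gives an essential spectral radius bound of $2c$, i.e.\ the trivial spectral-radius bound for the modulus operator, not the theorem. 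The content of Ruelle's result is exactly that the $V(f)$-coefficient can be taken of the order of $\norm{\Hat{\mathcal{M}}^m}_{\mathcal{L}(\mathcal{BV})}$, and this requires tracking how the variation of $f$ distributes over the (possibly overlapping) images $\Psi_{\underline{n}}(I_{\underline{n}})$ --- the overlap multiplicity being what $\Hat{\mathcal{M}}^m$ records --- rather than crude submultiplicativity. A secondary, fixable point: in Hennion's theorem the weak term of the Lasota--Yorke inequality must involve a norm for which the $\mathcal{BV}$-ball is relatively compact (e.g.\ $\norm{f}_{L^1}$); writing it with $\norm{f}_\infty$ as you do is not admissible as stated.
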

    ~\\
    
    \noindent\textbf{Acknowledgements. }We would like to thank C. Cuny, J. Dedecker and F. Merlevède for helpful discussions. We also thank the reviewer for his careful reading of the paper and his comments which improved the presentation of the paper. The first author would like to thank both MAP5 (Université Paris Cité) and LMBA (Université de Bretagne Occidentale) laboratories for hosting him during his internship, during which a part of this article was written.
    
    \printbibliography
\end{document}